\DeclareFontFamily{U}{solomos}{}
\DeclareFontShape{U}{solomos}{m}{n}{
  <-> s*[1.1]  gsolomos8r
}{}
\long\def\comment#1\endcomment{}
\theoremstyle{plain}
\newtheorem{theorem}{\sc Theorem}[section]
\newtheorem{lemma}[theorem]{\sc Lemma}
\newtheorem{prop}[theorem]{\sc Proposition}
\newtheorem{coroll}[theorem]{\sc Corollary}
\newcommand*{\doublerightarrow}[2]{\mathrel{
  \settowidth{\@tempdima}{$\scriptstyle#1$}
  \settowidth{\@tempdimb}{$\scriptstyle#2$}
  \ifdim\@tempdimb>\@tempdima \@tempdima=\@tempdimb\fi
  \mathop{\vcenter{
    \offinterlineskip\ialign{\hbox to\dimexpr\@tempdima+1em{##}\cr
    \rightarrowfill\cr\noalign{\kern.5ex}
    \rightarrowfill\cr}}}\limits^{\!#1}_{\!#2}}}
\theoremstyle{plain}
\newtheorem{defn}[theorem]{\sc Definition}
\theoremstyle{exercise}
\newtheorem{remark}[theorem]{\sc Remark}
\newtheorem{example}[theorem]{\sc Example}
\makeatletter \@addtoreset{equation}{section} \makeatother
\def\eqref#1{\thetag{\ref{#1}}}
\let\latexref=\ref
\def\ref#1{{\normalfont{\latexref{#1}}}}
\newcommand{\ldot}{{\:\raisebox{2,3pt}{\text{\circle*{1.5}}}}}
\newcommand{\udot}{{\:\raisebox{3pt}{\text{\circle*{1.5}}}}}
\def\dlim_#1{{\displaystyle\lim_{#1}}^\hdot}
\newcommand{\End}{\operatorname{End}}
\newcommand{\id}{\operatorname{\rm id}}
\newcommand{\Mor}{\mathrm{Mor}}
\newcommand{\Ob}{\mathrm{Ob}}
\newcommand{\Hom}{\mathrm{Hom}}
\newcommand{\Hoch}{\mathrm{Hoch}}
\newcommand{\Lie}{\mathrm{Lie}}
\newcommand{\op}{\mathrm{op}}
\newcommand{\dg}{\mathrm{dg}}
\newcommand{\Sets}{\mathscr{S}ets}
\newcommand{\Vect}{\mathscr{V}ect}
\newcommand{\Cat}{{\mathscr{C}at}}
\renewcommand{\top}{\mathrm{top}}
\newcommand{\Fun}{{\mathrm{Fun}}}
\renewcommand{\k}{\Bbbk}
\newcommand{\pprime}{{\prime\prime}}
\newcommand{\tot}{\mathrm{tot}}
\newcommand{\coh}{\mathrm{coh}}
\newcommand{\Coh}{{\mathscr{C}{oh}}}
\newcommand{\sotimes}{{\overset{\sim}{\otimes}}}
\newcommand{\Hot}{{\mathrm{Hot}}}
\newcommand{\Ord}{\mathrm{Ord}}
\newcommand{\Grph}{{\mathscr{G}rph}}
\newcommand{\HHom}{{\mathscr{H}om}}
\newcommand{\EEnd}{{\mathscr{E}nd}}
\newcommand{\FFun}{{\mathscr{F}un}}
\newcommand{\Tot}{\mathrm{Tot}}
\newcommand{\RRHom}{{\mathbb{R}\underline{\mathrm{Hom}}}}
\newcommand{\coll}{\mathrm{coll}}
\newcommand{\Span}{{\mathscr{S}pan}}
\newcommand{\Sym}{\mathrm{Sym}}
\newcommand{\Disk}{\mathrm{Disk}}
\renewcommand{\min}{\mathrm{min}}
\renewcommand{\max}{\mathrm{max}}
\newcommand{\Br}{\mathrm{Br}}
\newcommand{\Op}{\mathrm{Op}}
\newcommand{\circD}{D^{\circ}}
\newcommand{\Tree}{\mathbf{Tree}}
\newcommand{\Out}{\mathrm{Out}}
\newcommand{\Symm}{\mathrm{Symm}}
\newcommand{\Des}{\mathrm{Des}}
\newcommand{\Trees}{\mathbf{Tree}}
\newcommand{\dom}{\mathrm{dom}}
\newcommand{\Glob}{\mathbf{Glob}}
\newcommand{\sevafigc}[4]{\begin{figure}[h]\centerline{
 \epsfig{file=#1,width=#2,angle=#3}}
\bigskip\caption{#4}\end{figure}}
\title{\sc{The twisted tensor product of dg categories\\and a contractible 2-operad}}
\author{\sc{Boris Shoikhet}}
\date{}
\begin{document}\maketitle
{\footnotesize
\begin{center}{\parbox{4,5in}{{\sc Abstract.}
It is well-known that the ``pre-2-category'' $\Cat_\dg^\coh(\k)$ of small dg categories over a field $\k$, with 1-morphisms defined as dg functors, and 2-morphisms defined as the complexes of coherent natural transformations, fails to be a strict 2-category. 
The question ``What do dg categories form'',  raised by V.Drinfeld in [Dr], is interpreted in this context as a question of finding a weak 2-category structure on $\Cat_\dg^\coh(\k)$.
In [T2], D.Tamarkin proposed an answer to this question, by constructing a contractible 2-operad in the sense of M.Batanin [Ba3], acting on $\Cat_\dg^\coh(\k)$. 
 
In this paper, we construct {\it another} contractible 2-operad, acting on $\Cat_\dg^\coh(\k)$. 
Our main tool is the {\it twisted tensor product} of small dg categories, introduced in [Sh3]. We establish a one-side associativity for the twisted tensor product, making $(\Cat_\dg^\coh(\k),\sotimes)$ a skew monoidal category in the sense of [S],  and construct a {\it twisted composition} $\Coh_\dg(D,E)\sotimes\Coh_\dg(C,D)\to\Coh_\dg(C,E)$, and prove some compatibility between these two structures. Taken together,  the two structures give rise to a 2-operad $\mathcal{O}$, acting on $\Cat_\dg^\coh(\k)$. Its contractibility is a consequence of a general result of [Sh3].

}}
\end{center}
}

\section{\sc Introduction}\label{section0}
\subsection{\sc }
In this paper, we further investigate the {\it twisted tensor product} of small differential graded (dg) categories over a field $\k$, which was recently introduced in [Sh3]. Recall that the twisted tensor product $C\sotimes D$ fulfils the adjunction 
\begin{equation}\label{eqx01}
\Fun_\dg(C\sotimes D, E)\simeq \Fun_\dg(C,\Coh_\dg(D,E))
\end{equation}
where $\Fun_\dg(-,-)$ is the set of dg functors, and $\Coh_\dg(D,E)$ is the dg category whose objects are the dg functors $f\colon D\to E$, and whose morphisms $f\Rightarrow g$ are given by the reduced Hochschild cochains on $D$ with coefficients in the $D^\op\otimes D$-module $E(f(-),g(-))$. The closed elements in  $\Coh_\dg(D,E)(f,g)$ are thought of as {\it derived} natural transformations from $f$ to $g$. Such derived complexes were introduced, for simplicial enrichment, by Cordier and Porter (see [CP] and the references for earlier papers therein), and were studied for a general enrichment in [Ba1,2], [St]. Unlike for the simplicial enrichment, for the dg enrichment there is an associative (vertical) composition, making $\Coh_\dg(-,-)$ a dg category. It follows from [Fa, Th.1.7] that, for $D$ cofibrant for the Tabuada closed model structure [Tab], the dg category $\Coh_\dg(D,E)$ is isomorphic in the homotopy category to the dg category $\RRHom(D,E)$ introduced by To\"{e}n in [To]. In particular, for $D,D^\prime$ cofibrant, and $w_1\colon D\to D^\prime, w_2\colon E\to E^\prime$ quasi-equivalences, the dg functors $w_1^*\colon \Coh_\dg(D^\prime,E)\to \Coh_\dg(D,E)$ and $w_{2*}\colon \Coh_\dg(D,E)\to \Coh_\dg(D,E^\prime)$ are quasi-equivalences.

It is worthy to compare adjunction \eqref{eqx01} with the adjunction proven in [To, Sect. 6]\footnote{To\"{e}n proved in [To, Cor.6.4] a much stronger statement than \eqref{eqx02}.}:
\begin{equation}\label{eqx02}
\Hot(C\otimes D,E)\simeq\Hot(C,\RRHom(D,E))
\end{equation}
where $\Hot$ stands for (the set valued external $\Hom$ in) the homotopy category of the category $\Cat_\dg(\k)$ of small dg categories over $\k$, with formally inverted quasi-equivalences.

We stress that, unlike \eqref{eqx02}, the adjunction \eqref{eqx01} holds in the category $\Cat_\dg(\k)$ itself, not in its homotopy category. It makes our $C\sotimes D$ non-symmetric in $C$ and $D$. 

One always has a dg functor $p\colon C\sotimes D\to C\otimes D$. Recall our main result in [Sh3]:
\begin{theorem}\label{theoremx1}
For $C,D$ cofibrant, the dg functor $p\colon C\sotimes D\to C\otimes D$ is a quasi-equivalence.
\end{theorem}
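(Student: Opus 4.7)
The plan is to verify the two conditions for $p$ to be a quasi-equivalence. First, $C\sotimes D$ and $C\otimes D$ share the same object set $\Ob(C)\times\Ob(D)$, with $p$ acting as the identity on objects. Thus essential surjectivity on $H^0$ is immediate and only quasi-fully-faithfulness on morphism complexes remains.

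Next, I would derive an explicit description of $(C\sotimes D)((c,d),(c',d'))$ from the defining adjunction \eqref{eqx01}. Applying \eqref{eqx01} to the ``two-object'' representing dg category that detects this morphism complex via a Yoneda-style probe expands $(C\sotimes D)(-,-)$ as an iterated bar/Hochschild construction: contributions come from tuples of intermediate objects in $D$ together with strings of morphisms indexed by composable chains in $C$, reflecting the arity stratification of the Hochschild complex used to define $\Coh_\dg(D,E)$. Under this identification, the canonical projection $p$ corresponds to retaining only the ``length-zero'' summand $C(c,c')\otimes D(d,d')$, so its kernel on morphism complexes is precisely the subcomplex of higher bar terms.

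The heart of the proof is then to show that this kernel is acyclic when $C$ and $D$ are cofibrant. My plan is to reduce, using naturality of $p$ and preservation of retractions / transfinite cellular colimits by $\sotimes$, to the semi-free case where $C$ and $D$ are built by iteratively freely adjoining generating morphisms. For such $C$ and $D$, I would either construct an explicit contracting homotopy on the higher-bar part of the kernel, in the spirit of the simplicial/Hochschild normalization theorem for a freely generated dg category, or set up a spectral sequence filtered by total bar length whose $E_1$-page is identified with a tensor of bar complexes and forced to collapse by cofibrancy. Handling the cells one at a time then propagates the quasi-isomorphism through the cellular filtration of $C$ and $D$.

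The main obstacle is the combinatorial control of either the contracting homotopy or the spectral sequence, since the differential on $(C\sotimes D)(-,-)$ couples the $C$-strings and the $D$-strings nontrivially: one cannot simply contract along $D$ alone. A secondary technical point is that the reduction to the semi-free case must be bootstrapped in parallel with a Brown-lemma-type statement that $-\sotimes D$ and $C\sotimes-$ preserve quasi-equivalences between cofibrant dg categories; this bootstrap, rather than the acyclicity computation per se, is where I expect the most delicate work to lie.
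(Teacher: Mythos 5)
First, note that the paper does not actually prove this statement: it is quoted from [Sh3] (it reappears below as Theorem \ref{theorht}, with the proof attributed to [Sh3, Th.\ 2.4]), so there is no in-paper argument to measure your proposal against. Judged on its own terms, your plan gets the easy reductions right: $p$ is the identity on objects, so only quasi-fully-faithfulness on Hom-complexes is at stake; the kernel of $p$ on Hom-complexes is the ideal generated by the $\varepsilon(f;g_1,\dots,g_n)$ with $n\ge 1$; and the passage from cofibrant to semi-free dg categories by retracts and transfinite cellular colimits is standard. One small correction to your description of the morphism complexes: the higher generators carry a \emph{single} morphism of $C$ and a composable \emph{chain} in $D$ (not strings indexed by chains in $C$), and a general element of $(C\sotimes D)((c,d),(c',d'))$ is a word in these generators, so the ``bar-type'' decomposition you invoke must be organized accordingly.

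The genuine gap is the core acyclicity step, which you leave as a disjunction (``either an explicit contracting homotopy \dots\ or a spectral sequence \dots\ forced to collapse by cofibrancy'') without carrying out either branch; this is where the entire difficulty lives. The differential \eqref{eqd1} sends $\varepsilon(f;g_1,\dots,g_n)$ to a combination of shorter $\varepsilon$'s multiplied, inside the freely generated category, by generators of the form $\id\otimes g$, so the kernel is not a tensor product of bar complexes, and no naive filtration by bar length has an obviously computable first page; cofibrancy of $C$ and $D$ does not by itself ``force collapse.'' The author's own remark after Theorem \ref{theorem1} --- that even for the cofibrant categories $I_{n_i}$ the resulting quasi-isomorphism $I_{n_1,\dots,n_k}(\min,\max)\to\k[0]$ ``seems to be quite non-trivial, we do not know any way to prove it directly'' --- is strong evidence that the direct contracting-homotopy route does not go through even in the simplest semi-free examples. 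Likewise, the ``Brown-lemma-type'' statement that $C\sotimes-$ and $-\sotimes D$ preserve quasi-equivalences between cofibrant objects, which your cellular induction needs as input, is essentially of the same order of difficulty as the theorem itself, so the bootstrap is circular as stated. To complete the argument you would need to supply the actual mechanism of [Sh3, Th.\ 2.4] (or an equivalent one), not just name the two standard templates it might fit.
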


We recall the construction of $C\sotimes D$ in Section \ref{section11}.

\subsection{\sc }
In this paper, we construct, by means of the twisted tensor product, a homotopically final 2-operad, in the sense of Batanin [Ba3-5], which acts on the ``pre-2-category'' $\Cat_\dg^\coh(\k)$ of small dg categories over $\k$, whose objects are small dg categories over $\k$, whose morphisms are dg functors, and whose complex of 2-morphisms $f\Rightarrow g\colon C\to D$ is defined as $\Coh_\dg(C,D)(f,g)$. That is, our 2-operad solves the same problem as the 2-operad constructed by Tamarkin in [T2]. However, the 2-operads are distinct; we prove in [Sh5] that our dg 2-operad is isomorphic to the normalized 2-operadic analogue of the brace operad of [BBM], and is quasi-isomorphic to (the dg condensation of) the Tamarkin 2-operad. 

The pre-2-category $\Coh_\dg^\coh(\k)$ fails to be a strict 2-category. In particular, we can not define the horizontal composition $\Coh_\dg(D,E)(g_1,g_2)\otimes\Coh_\dg(C,D)(f_1,f_2)\to \Coh_\dg(C,E)(g_1f_1,g_2f_2)$. There are 2 candidates for such horizontal composition, and there is a homotopy between them. Therefore, the ``minimal'' space of ``all possible horizontal compositions'' as above is the complex 
\begin{equation}\label{eq02}
0\to\underset{\deg=-1}{\k}\xrightarrow{d}\underset{\deg=0}{\k\oplus\k}\to 0
\end{equation}
$$
d(x_{12})=x_2-x_1
$$
where $x_1,x_2,x_{12}$ are generators in the corresponding vector spaces.
Note that the cohomology of this complex is $\k[0]$; therefore, ``homotopically the operation is unique''. The reader is referred to Section \ref{sectionassoc} and to [Sh6], Sect.4.2 for more detail on the two horizontal compositions and the homotopy between them, as well as for graphical illustrations for the corresponding cochains.

\subsection{\sc}
One of our main new observations is existence of a canonical dg functor
\begin{equation}\label{eqx03}
M\colon\Coh_\dg(D,E)\sotimes\Coh_\dg(C,D)\to\Coh_\dg(C,E)
\end{equation}
called {\it the twisted composition}, with nice properties.

It is associative in the sense specified in Theorem \ref{theorem3} below. Let $\Psi\in \Coh_\dg(C,D)(f_1,f_2)$, $\Theta\in \Coh_\dg(D,E)(g_1,g_2)$. Then the two horizontal compositions are
$M((\Theta\otimes \id_{f_2})*(\id_{g_1}\otimes \Psi))$ and $M((\id_{g_2}\otimes\Psi)*(\Theta\otimes \id_{f_1}))$, correspondingly, and the homotopy between them is $M(\varepsilon(\Theta;\Psi))$ (see Section \ref{section11} for the notations for $\sotimes$ used here). The  morphism $\varepsilon(\Theta;\ \Psi_1,\dots,\Psi_n)$ is sent by $M$ to the {\it brace operation} $\Theta\{\Psi_1,\dots,\Psi_n\}$.

In fact, existence of such twisted composition is a formal consequence of a {\it closed skew monoidal} category structure on $\Cat_\dg(\k)$ with the skew monoidal product given by the twisted tensor product $\sotimes$, and with the right adjoint to $-\sotimes X$ given by $\Coh_\dg(X,=)$, see Section \ref{closedskewmonoidalintro}. Namely, any closed skew monoidal category is enriched over itself, which gives the twisted composition map, see Proposition \ref{propsm}. The skew-monoidality of the twisted tensor product means that there is an associator 
\begin{equation}\label{xassoc}
\alpha_{C,D,E}\colon (C\sotimes D)\sotimes E\to C\sotimes (D\sotimes E)
\end{equation}
{\it which is in general not an isomorphism} nor a quasi-isomorphism, which fulfils the hexagon and unit axioms.

One can iterate the twisted composition, and get a canonical dg functor 
\begin{equation}\label{eqx03bis}
M\colon \Coh_\dg(C_{k-1},C_k)\sotimes (\Coh_\dg(C_{k-2},C_{k-1})\sotimes(\dots\sotimes \Coh_\dg(C_0,C_1)\dots))\to\Coh_\dg(C_0,C_k)
\end{equation}
Let $n_1,\dots,n_k\ge 1, k\ge 1$. It defines a 2-globular pasting diagram $D=(n_1,\dots,n_k)$, see Figure \ref{fig1}.
Assume we are given dg categories $C_0,\dots,C_k$, dg functors $\{F_{i,j}\colon C_{i-1}\to C_i\}_{\substack{{i=1\dots k}\\{j=0,\dots, n_i}}}$, and elements $\{\Psi_{ij}\colon F_{ij}\Rightarrow F_{i,j+1}\in\Coh_\dg(C_{i-1},C_i)\}_{\substack{{i=1\dots k}\\{j=0,\dots, n_i-1}}}$.

One imagines this data by the following 2-globular pasting diagram, augmented by the 2-arrows $\Psi_{ij}\colon F_{i,j-1}\to F_{ij}$
\sevafigc{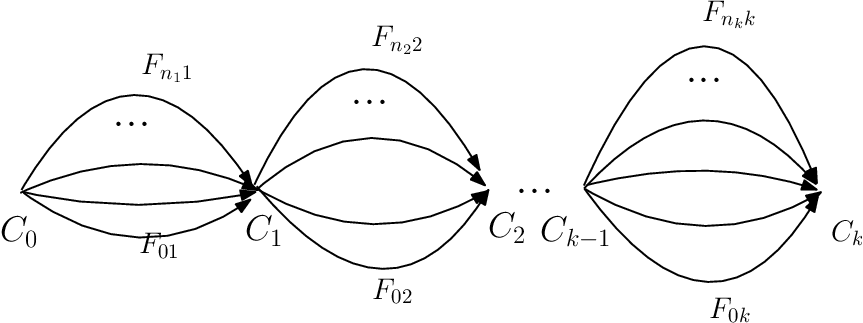}{100mm}{0}{The 2-globular pasting diagram $D=(n_1,\dots,n_k)$\label{fig1}}

We want to find the most general composition of all $\{\Psi_{ij}\}$ which should be an element in 
$\Coh_\dg(C_0,C_k)(F_\min,F_\max)$ where $F_\min=F_{k,0}\circ\dots\circ F_{1,0}$, $F_\max=F_{k,n_k}\circ\dots\circ F_{1,n_1}$. 
In virtue of the map \eqref{eqx03bis}, we have to ``cook up" an element in the l.h.s. of it, by the coherent 2-arrows $\{\Psi_{ij}\}$, and apply $M$ to this element. The question is than how to produce such elements.

Let $i_n$ be the ordinary category, having objects $0,1,\dots,n$, and a unique morphism $i_n(i,j)$ for any $i\le j$. Denote by $I_n$ the $\k$-linear category $I_n=\k[i_n]$, by $\{e_1,\dots,e_n\}$ its generators, $e_j\in I_n(j-1,j)$. 
Define
\begin{equation}\label{eqx15}
I_{n_1,\dots,n_k}=I_{n_k}\sotimes(I_{n_{k-1}}\sotimes(\dots \sotimes(I_{n_2}\sotimes I_{n_1})\dots))
\end{equation}
\begin{equation}\label{eqx16}
\mathcal{O}(n_1,\dots,n_k)=I_{n_1,\dots,n_k}(\min,\max)
\end{equation}
where 
$$
\min=(0,0,\dots,0),\ \ \max=(n_k,n_{k-1},\dots,n_1)
$$
Note that $\mathcal{O}(n)=\k[0]$, and $\mathcal{O}(1,1)$ is exactly the complex \eqref{eq02}.

Having an element in $\mathcal{O}(n_1,\dots,n_k)$, we can plug $\Psi_{ij}$ for the place of the  corresponding generator $e_j^i$ of $I_{n_i}$, and get in an element in the l.h.s. of \eqref{eqx03bis}. Moreover, it is the most general ``canonical'' way of doing it. 

One can similarly plug elements of $\mathcal{O}(D)$ for the place of generators of some other $\mathcal{O}(D^\prime)$, which makes the 2-collection $\{\mathcal{O}(D)\}$ a 2-operad.

 A 2-operad in $\Vect_\dg(\k)$ \footnote{more precisely, a 1-terminal 2-operad [Ba4]} is given by a complex of $\k$-vector spaces $\mathcal{O}(D)$, for any 2-globular pasting diagram $D=(n_1,\dots,n_k)$. These complexes are subject to the 2-operadic associativity, see Section \ref{sectionbcomp}.

We can state our main result:
\begin{theorem}\label{theorem1}
The complexes of $\k$-vector spaces $\mathcal{O}(D)$, $D=(n_1,\dots,n_k)$ are the components of a 2-operad acting on $\Cat_\dg^\coh(\k)$. The 2-operad $\mathcal{O}$ is homotopically final, that is, there is a map of complexes $p\colon \mathcal{O}(D)\to\k[0]$ which is a quasi-isomorphism of complexes, for any $D$, which is compatible with the 2-operadic composition. 
\end{theorem}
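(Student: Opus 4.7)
The theorem combines three assertions: (i) the complexes $\mathcal{O}(D)$ carry a 2-operad structure; (ii) this 2-operad acts on $\Cat_\dg^\coh(\k)$; (iii) the augmentation $\mathcal{O}(D)\to\k[0]$ is a quasi-isomorphism compatible with the operad structure. I would treat (i) and (ii) in parallel and then address (iii) using Theorem \ref{theoremx1}.

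For (i) and (ii), the key observation is that, by the adjunction \eqref{eqx01} applied to $C=I_{n_i}$, a filling of the $i$-th column of the 2-disk $(n_1,\dots,n_k)$ by 2-morphisms in $\Coh_\dg(C_{i-1},C_i)$ is the same datum as a dg functor $\phi_i\colon I_{n_i}\to\Coh_\dg(C_{i-1},C_i)$. Functoriality of $\sotimes$ then yields
\[
\phi\colon I_{n_k}\sotimes(\dots\sotimes I_{n_1})\;\longrightarrow\;\Coh_\dg(C_{k-1},C_k)\sotimes(\dots\sotimes\Coh_\dg(C_0,C_1)),
\]
and iterating the twisted composition $\tilde m$ of \eqref{eqx03} gives a dg functor
\[
\mu\colon \Coh_\dg(C_{k-1},C_k)\sotimes(\dots\sotimes\Coh_\dg(C_0,C_1))\;\longrightarrow\;\Coh_\dg(C_0,C_k).
\]
Evaluating $\mu\circ\phi$ on the morphism $\min\to\max$ defines the action map from $\mathcal{O}(n_1,\dots,n_k)$ to $\Coh_\dg(C_0,C_k)$. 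The 2-operadic composition maps of $\mathcal{O}$ itself are recovered as the universal instance of this construction, obtained by taking the $C_i$'s to be the dg categories $I_{\cdots}$ built from the inner 2-disks.

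For the 2-operadic axioms one must establish two separate associativities together with a compatibility between them. The vertical (column-wise) axiom is the associativity of iterated $\tilde m$; the horizontal (column-stacking) axiom is the skew-associativity of $\sotimes$ applied to the family $I_{n_i}$ coming from Theorem \ref{theorem3}; and the mixed axiom is the Gray-type compatibility between $\tilde m$ and the skew-associator that is flagged in the abstract. I expect this middle step to be the main obstacle, because one must check the coherences in full generality rather than merely at the level of single 2-morphisms; however, by the adjunction \eqref{eqx01} every required identity reduces to comparing certain dg functors out of an iterated twisted tensor product of the free categories $I_n$, and hence to checking equality on a finite set of generating morphisms.

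Finally, for (iii), each $I_n$ is cofibrant for the Tabuada model structure (the free dg category on a finite linearly ordered quiver), so by iterated application of Theorem \ref{theoremx1} the canonical dg functor
\[
p\colon I_{n_1,\dots,n_k}\;\longrightarrow\;I_{n_k}\otimes I_{n_{k-1}}\otimes\cdots\otimes I_{n_1}
\]
is a quasi-equivalence. The target is the $\k$-linearization of the product poset $i_{n_k}\times\cdots\times i_{n_1}$, whose morphism space from $(\min,\dots,\min)$ to $(\max,\dots,\max)$ equals $\k$ concentrated in degree $0$. This provides the required quasi-isomorphism $\mathcal{O}(n_1,\dots,n_k)\to\k[0]$. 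Compatibility with the 2-operadic composition reduces to the functoriality of $p$ with respect both to $\sotimes$ (versus $\otimes$) and to $\tilde m$ (versus ordinary composition of functors): under these identifications, the 2-operadic structure on $\mathcal{O}$ projects to the tautological structure on the terminal 2-operad $\k[0]$, which concludes the proof.
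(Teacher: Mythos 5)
Your overall architecture matches the paper's: the action on $\Cat_\dg^\coh(\k)$ is built exactly as you describe (column-wise dg functors $I_{n_i}\to\Coh_\dg(C_{i-1},C_i)$, their iterated $\sotimes$, then iterated $\tilde m$, restricted to $\Hom(\min,\max)$), and part (iii) is verbatim the paper's argument via cofibrancy of the $I_n$, stability of cofibrancy under $\sotimes$, Theorem \ref{theoremx1}, and the observation that $p$ kills the ideal generated by the $\varepsilon$'s and hence commutes with all compositions.

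The one place where your proposal is too vague to stand as a proof is the definition of the 2-operadic composition on $\mathcal{O}$ itself. It cannot be ``the universal instance'' of the action map, because the action lands in a complex of the form $\Coh_\dg(C_0,C_k)(F_{\min},F_{\max})$, whereas the operadic composition must land back in $\mathcal{O}(U)=I_U(\min,\max)$, a $\Hom$-complex in an iterated \emph{twisted tensor product}; no choice of the $C_i$ turns the latter into the former. What the paper actually does is: (a) a substitution (``plug-in'') map, induced by freeness of the $I_{m_i}$ and functoriality of $\sotimes$, which produces an element of $\bigl(I_{U_\ell}\sotimes(\dots\sotimes I_{U_1})\bigr)(\min,\max)$ with the \emph{wrong bracketing}; followed by (b) a reassociation dg functor $\Upsilon$ assembled from the one-sided associators $\alpha$. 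The 2-operadic associativity is then precisely the statement that the various composites of $\alpha$'s agree, which is Proposition \ref{propups}, a consequence of the coherence theorem for \emph{perfect} skew monoidal categories (Proposition \ref{propskew}) — not of any property of $\tilde m$. Relatedly, you attribute the skew-associativity of $\sotimes$ to Theorem \ref{theorem3}; it is Theorem \ref{theorem2}, while Theorem \ref{theorem3} (the commutativity of \eqref{eqx17}) is used only to verify that the maps $\Theta(D)$ define an \emph{action}. So the division of labour is: skew monoidal coherence $\Rightarrow$ operad structure on $\mathcal{O}$; dg-functoriality of $\tilde m$ plus \eqref{eqx17} $\Rightarrow$ action. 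Your proposal has all the ingredients but assigns them to the wrong axioms, and omits the reassociation step that is the actual content of the operadic associativity.
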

We briefly recall the main definitions related to 2-operads in Appendix B. According to Batanin [Ba3] (see also Theorem \ref{theoremm}) an action of a homotopy trivial $n$-operad encodes a weak $n$-category. 

Note that the dg categories $I_n$ are cofibrant. As well, the twisted tensor product $C\sotimes D$ of two cofibrant dg categories is cofibrant, by [Sh3, Lemma 4.5]. Therefore, Theorem \ref{theoremx1} is applied to $I_{n_1,\dots,n_k}$ and it gives {\it a quasi-isomorphism}
$$
I_{n_1,\dots,n_k}(\min,\max)\xrightarrow{{quis}}(I_{n_k}\otimes\dots\otimes I_{n_1})(\min,\max)=\k[0]
$$
(The statement that this map is a quasi-isomorphism seems to be quite non-trivial, we do not know any way to prove it directly).
In fact, this application was our main motivation for developing of a more general theory in [Sh3].

\subsection{\sc }\label{sectionintroxskew}
Let us outline the constructions and results which lead to a proof of Theorem \ref{theorem1}.

First of all, the ``one-sided'' associativity map \eqref{xassoc}, together with ``one-sided'' unit maps, make $\Cat_\dg^\coh(\k)$ a skew-monoidal category, [S], [BL], [LS]. This skew-monoidal category is closed, with the inner $\Hom(C,D)$ given by $\Coh_\dg(C,D)$.

There is an analogue of the Mac Lane coherence theorem for skew monoidal categories, proven in loc.cit. The situation for the skew case is more complicated than its classical counterpart. Luckily, our example belongs to a special class of a skew monoidal categories, called {\it perfect} skew monoidal categories (see Definition \ref{defskewperf}). For this case, the coherence theorem is essentially simplified, and is exactly as simple as its classical pattern. 

One has:
\begin{theorem}\label{theorem2}
The triple $(\Cat_\dg(\k),\sotimes,\alpha)$ (augmented by the unit $\underline{\k}$ and the unit maps) forms a perfect skew monoidal category.
\end{theorem}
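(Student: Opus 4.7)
The plan is to exploit the adjunction \eqref{eqx01} as the organizing principle. Giving a natural dg functor $\alpha_{C,D,E}\colon (C\sotimes D)\sotimes E \to C\sotimes(D\sotimes E)$ is equivalent, via two applications of \eqref{eqx01} together with Yoneda in $C$, to giving a natural dg functor
\begin{equation*}
\beta_{D,E,F}\colon \Coh_\dg(D\sotimes E, F)\longrightarrow \Coh_\dg(D,\Coh_\dg(E, F)),
\end{equation*}
and then $\alpha$ is recovered by applying $\beta$ (for $F = C\sotimes(D\sotimes E)$) to the identity. I would construct $\beta$ directly from the reduced Hochschild-cochain description of $\Coh_\dg$: a cochain on $D\sotimes E$ valued in $F$ can be restricted to composable strings of morphisms of the ``tensor'' form $f\sotimes \id$, varying $f$ in $D$, and the result reorganizes into a cochain on $D$ valued in $\Coh_\dg(E,F)$. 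This gives a well-defined dg functor $\beta$, which is not an equivalence in general, reflecting the asymmetry of $\sotimes$.

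The unit maps come from the base case of the adjunction. The identification $\Fun_\dg(\underline{\k}\sotimes C, E) = \Fun_\dg(\underline{\k}, \Coh_\dg(C, E)) = \Fun_\dg(C, E)$ gives the (invertible) left unitor $\lambda_C\colon \underline{\k}\sotimes C \xrightarrow{\sim} C$, while the right unitor $\rho_C\colon C\to C\sotimes\underline{\k}$ is the unit of the adjunction $(-\sotimes\underline{\k})\dashv \Coh_\dg(\underline{\k},-)$, using the natural identification $\Coh_\dg(\underline{\k}, C)\simeq C$ that comes from the reduced nature of Hochschild cochains on $\underline{\k}$.

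The main work is the verification of the pentagon axiom for $\alpha$. By the same Yoneda reduction, the pentagon is equivalent to the commutativity of the diagram comparing the two ways of iterating $\beta$ to obtain a natural dg functor
\begin{equation*}
\Coh_\dg(B\sotimes C\sotimes D, F)\longrightarrow \Coh_\dg(B, \Coh_\dg(C, \Coh_\dg(D, F))).
\end{equation*}
Both factorizations correspond to the same universal procedure of restricting a three-variable twisted Hochschild cochain to iterated strings living in ``single tensor slots'', so equality is built into the cochain-level construction. The two triangle axioms for $\lambda,\rho$ reduce similarly to small identities for $\beta$ in the presence of a unit slot, and the perfectness condition of Definition \ref{defskewperf} is then read off from the strict identification $\underline{\k}\sotimes C = C$ inherent in $\lambda$. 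The main technical obstacle is the cochain-level bookkeeping for the pentagon, which becomes tractable once the explicit construction of $\sotimes$ recalled in Section \ref{section11} is unpacked; one should expect all higher compatibilities to then follow from the same universal ``uncurrying'' pattern that defines $\beta$.
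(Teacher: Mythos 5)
Your overall strategy --- currying $\alpha$ through the adjunction \eqref{eqx01} into a map $\beta_{D,E,F}\colon\Coh_\dg(D\sotimes E,F)\to\Coh_\dg(D,\Coh_\dg(E,F))$ and recovering $\alpha$ by Yoneda --- is a legitimate reorganization, and your treatment of the unit maps is essentially right: both $\underline{\k}\sotimes C\to C$ and $C\to C\sotimes\underline{\k}$ are isomorphisms because, by relation $(R_3)$, every generator $\varepsilon(f;g_1,\dots,g_n)$ with an identity in any slot vanishes and all morphisms of $\underline{\k}$ are multiples of identities; the same observation handles $\alpha_{C,\underline{\k},D}$ and hence perfectness. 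But the proposal has a genuine gap exactly where the content of the theorem lies. The dg functor $\beta$ on morphisms is not obtained by ``restricting a cochain on $D\sotimes E$ to strings of the form $f\otimes\id$'': a morphism of $\Coh_\dg(D,\Coh_\dg(E,F))$ must, for each string in $D$, produce a cochain in the $E$-variables, and to extract this from a cochain on $D\sotimes E$ one has to evaluate on all sign-weighted interleavings of the generators $f_i\otimes\id$, $\id\otimes g_j$ \emph{and} $\varepsilon(f_i;g_j,\dots)$. That is precisely a brace-type formula, and it is the analogue of the paper's explicit definition of $\alpha$ on the generators $\varepsilon(\varepsilon(f;g_1,\dots,g_k);h_1,\dots,h_N)$ in \eqref{eqassoc4bis}; without writing it down and checking compatibility with the differential \eqref{eqd1} and with relation $(R_4)$, nothing has yet been constructed.

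Likewise the pentagon is not ``built into the construction''. At the cochain level the agreement of the two iterated uncurryings is the brace composition identity (the Tsygan identity \eqref{eqapp5x}, \eqref{eqabcbr2}), a nontrivial sign-sensitive computation; it is the same computation that appears as step (s4) in the proof of Theorem \ref{theorassoc} and again in Section \ref{sectionm3}. Asserting that both factorizations are ``the same universal procedure'' presupposes a uniqueness statement (that a dg functor out of $(B\sotimes C)\sotimes D$ is determined by restricted data on which the two sides visibly agree) which you have neither formulated nor proved. The paper's route is more pedestrian but complete: $\alpha$ is defined explicitly on generators by the brace-type formula of Theorem \ref{theorassoc}, the conditions (s1)--(s4) are verified directly, and the skew-monoidal axioms and perfectness then follow by a direct check. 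Your approach could be completed, but only by supplying exactly these formulas and verifications, so as it stands it defers rather than discharges the hard part.
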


The skew monoidal structure on $(\Cat_\dg(\k),\sotimes)$ is essentially used for the 2-operad structure on the complexes $\mathcal{O}(D)$, defined in \eqref{eqx15}, \eqref{eqx16}. More precisely, the corresponding coherence theorem is employed to establish the 2-operadic associativity and unit identities. 

After that, we establish a compatibility between the skew monoidal structure given by $\alpha$ with the twisted composition dg functor 
$$
M\colon \Coh_\dg(D,E)\sotimes\Coh_\dg(C,D)\to\Coh_\dg(C,E)
$$
where
$C,D,E\in\Cat_\dg(\k)$. Consider the diagram
\begin{equation}\label{eqx17}
\xymatrix{
(\Coh_\dg(C,D)\sotimes \Coh_\dg(B,C))\sotimes\Coh_\dg(A,B)\ar[rr]^{\alpha}\ar[d]_{M\sotimes\id}&&\Coh_\dg(C,D)\sotimes(\Coh_\dg(B,C)\sotimes\Coh_\dg(A,B))\ar[d]^{\id\sotimes M}\\
\Coh_\dg(B,D)\sotimes \Coh_\dg(A,B)\ar[d]_{M}&&\Coh_\dg(C,D)\sotimes\Coh_\dg(A,C)\ar[d]^{M}\\
\Coh_\dg(A,D)\ar[rr]^{=}&&\Coh_\dg(A,D)
}
\end{equation}
\begin{theorem}\label{theorem3}
There exists a twisted composition \eqref{eqx03} which is associative in the sense that diagram \eqref{eqx17} commutes.
\end{theorem}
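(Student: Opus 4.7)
The plan is to construct $\tilde{m}$ via the adjunction \eqref{eqx01} and then reduce the associativity in \eqref{eqx17} to the Gerstenhaber--Voronov brace identity for Hochschild cochains. By \eqref{eqx01}, giving a dg functor $\tilde{m}\colon \Coh_\dg(D,E)\sotimes\Coh_\dg(C,D)\to\Coh_\dg(C,E)$ is the same as giving a dg functor
$$
\mu\colon \Coh_\dg(D,E)\to\Coh_\dg\bigl(\Coh_\dg(C,D),\Coh_\dg(C,E)\bigr).
$$
On objects, $\mu$ sends a dg functor $g\colon D\to E$ to the post-composition dg functor $g_*$ defined by $g_*(f)=g\circ f$ and acting on a Hochschild cochain $\Psi\in\Coh_\dg(C,D)(f_1,f_2)$ by applying $g$ to the coefficients. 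On a morphism $\Theta\colon g_1\Rightarrow g_2$ of $\Coh_\dg(D,E)$, which is a reduced Hochschild cochain on $D$ of some arity $n$, I define $\mu(\Theta)$ to be the coherent natural transformation $(g_1)_*\Rightarrow (g_2)_*$ whose value on an $n$-tuple $\Psi_1,\dots,\Psi_n$ of composable coherent natural transformations in $\Coh_\dg(C,D)$ is the brace operation $\Theta\{\Psi_1,\dots,\Psi_n\}$. That $\mu$ is a dg functor reduces to the standard brace algebra axioms satisfied by the Hochschild cochain complex.

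For the associativity \eqref{eqx17}, I would apply the adjunction \eqref{eqx01} to convert each of the two composites into an explicit formula acting on a tuple $(\Theta;\Psi_1,\dots,\Psi_n;\Xi_1,\dots,\Xi_m)$ coming from $\Coh_\dg(C,D)$, $\Coh_\dg(B,C)$, $\Coh_\dg(A,B)$. The lower-left path $\tilde{m}\circ(\tilde{m}\sotimes\id)$ unwinds as a brace-of-a-brace expression $(\Theta\{\Psi_\mb\})\{\Xi_\mb\}$, while the upper-right path $\tilde{m}\circ(\id\sotimes\tilde{m})\circ\alpha$ unwinds, after tracking the action of the skew monoidal associator $\alpha$ from Theorem \ref{theorem2} on the generators of $\sotimes$, as a nested brace expression $\Theta\bigl\{\ldots,\Psi_i\{\Xi_\mb\},\ldots\bigr\}$. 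The identity between these two is precisely the pre-Jacobi/brace identity for Hochschild cochains: the design of the generators $\varepsilon(\Theta;\Psi_1,\dots,\Psi_n)$ of the twisted tensor product (reviewed in Section \ref{section11}) is such that $\alpha$ implements exactly the combinatorial rebracketing required by the brace identity.

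The main obstacle will be to carry out the matching described above at the level of the explicit generators of $\sotimes$ and with the correct signs. Since $\alpha$ is not an isomorphism and the diagram \eqref{eqx17} is asserted to commute on the nose, each of the higher generators $\varepsilon(\Theta;\Psi_1,\dots,\Psi_n)$ of $(\Coh_\dg(C,D)\sotimes\Coh_\dg(B,C))\sotimes\Coh_\dg(A,B)$ must be pushed through the diagram, its image under both paths computed explicitly, and the two compared. The bulk of the work is thus a sign-sensitive term-by-term verification; once the coincidence of $\alpha$ with the nested-brace combinatorics is pinned down (using the coherence of the perfect skew monoidal structure of Theorem \ref{theorem2}), commutativity of \eqref{eqx17} becomes a direct consequence of the brace algebra structure on Hochschild cochains.
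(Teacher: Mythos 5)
Your proposal is correct and follows essentially the same route as the paper: $\tilde{m}$ is defined through the adjunction \eqref{eqx01} by sending $\Theta$ to the brace operation $\Theta\{\Psi_1,\dots,\Psi_k\}$ (a categorified version of Tsygan's map $C^\udot(A,A)\to C^\udot(C^\udot(A,A),C^\udot(A,A))$, with the dg-functor property reducing to [Ts, Prop.\ 3 and 4]), and the commutativity of \eqref{eqx17} reduces to the brace composition identity $(\Theta\{\Psi_\mb\}_{[-1]})\{\Gamma_\mb\}_{[-1]}=\sum\pm\Theta\{\dots,\Psi_i\{\Gamma_\mb\}_{[-1]},\dots\}_{[-1]}$ exactly as you describe. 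The only caveat, which you already flag, is the sign bookkeeping: the paper works with the unshifted complexes, so the braces are the ``$[-1]$-shifted'' ones of Appendix \ref{app13} rather than the standard ones on $C^\udot(A,A)[1]$.
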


This theorem is translated to an action of the 2-operad $\mathcal{O}$ on $\Cat_\dg^\coh(\k)$. 

\subsection{\sc }\label{closedskewmonoidalintro}
Existence of the twisted composition $M$ which fulfils \eqref{eqx17} follows from a general categorical consideration, as we now explain.

Assume $\mathscr{C}$ is a skew monoidal category, and there is an {\it inner Hom} $[-,-]\colon \mathscr{C}^\op\times\mathscr{C}\to\mathscr{C}$, such there there is an adjunction
\begin{equation}\label{adjskew}
\mathscr{C}(X\otimes Y,Z)\simeq \mathscr{C}(X,[Y,Z])
\end{equation}
Then one can translate the commutative diagrams in the definition  of a skew monoidal category to commutative diagrams build up entirely in terms of the inner product $[-,-]$ and the corresponding maps. The resulting concept is called a {\it skew closed category}, and was introduced in [S]. Moreover, in presence of adjunction \eqref{adjskew} the two structures are equivalent, as it is shown in [loc.cit., Sect. 8].

An advantage of this approach is that the compatibility such as \eqref{eqx17} is a consequence of the above-mentioned general set-up. 

In this way, the twisted composition is not a new construction, but a consequence of the adjunction \eqref{eqx01} on $\Cat_\dg(\k)$ and of the skew-monoidal structure $\sotimes$ on $\Cat_\dg(\k)$, and \eqref{eqx17} and Theorem \ref{theorem3} hold automatically. More precisely, the adjunction \eqref{adjskew} gives morphism
$$
[X,Y]\otimes X\to Y
$$
adjoint to the identity morphism, and
$$
([Y,Z]\otimes [X,Y])\otimes X\to [Y,Z]\otimes([X,Y]\otimes X)\to [Y,Z]\otimes Y\to Z
$$
gives, by the adjunction, the twisted composition. The associativity \eqref{eqx17} and the unit axiom are consequences of the corresponding properties for $\sotimes$, likewise for the classical case of closed monoidal category [Ke, Sect. 1.6]. 

\comment

Note that our methods allow us to prove the following more general result. Assume we are given a 2-quiver $\mathscr{X}$ whose underlying 1-quiver $\mathscr{X}_1$ is a strict ordinary category, the 2-morphisms (whose compositions are not defined yet) are elements of a symmetric monoidal category $V$. Assume that for any objects $C,D\in \mathscr{X}$, $\mathscr{X}(C,D)\in \Cat(V)$ is a $V$-category (that is, the vertical composition of 2-morphisms is strict and is already defined). Denote by $I_n$ the $V$-enriched length $n$ interval category. 
Then any closed skew monoidal structure on $\mathscr{X}_1$ whose internal hom $[C,D]=\mathscr{X}(C,D)$ gives rise to a 2-operad in $V$ acting on $\mathscr{X}$ (this 2-operad needn't be homotopically final, in any sense). 

\endcomment

\subsection{\sc }
When the paper had been completed the author learned about the papers [BW], [BCW] devoted to closed questions for the Gray tensor product of bicategories. The papers loc.cit. shed some light on our constructions and indicate how they can be performed in general. Below we provide few remarks on these methods in their application to our question. 

First of all, we can consider consider $\Cat_\dg^\coh(\k)$ as a 1-category enriched in $(\Cat_\dg(\k),\sotimes)$. So the question this paper is devoted to is to translate a strict 1-category enriched in $(\Cat_\dg(\k),\sotimes)$ to a weak 1-category enriched in $(\Cat_\dg(\k),\otimes)$ where $\otimes\colon \Cat_\dg(\k)\times\Cat_\dg(\k)\to \Cat_\dg(\k)$ is the conventional tensor product of dg categories. Here ``weak 1-category'' is understood as an algebra over a homotopy final 1-operad in $C^\udot(\k)$.

Secondly, the same methods are applied to {\it any} strict 1-category enriched in $(\Cat_\dg(\k),\sotimes)$, not only to $\Cat_\dg^\coh(\k)$. Such categories are analogues of ``Gray categories'' in loc.cit. In particular, our results provide a proof of the latter statement, without any changes.

Finally, the methods of loc.cit. allow us to rephrase our constructions to make them valid for any closed skew monoidal category $(\mathscr{V},\sotimes)$ and any strict 1-category enriched in $\mathscr{V},\sotimes)$ (provided $(\mathscr{V},\sotimes)$ can be ``lifted'' to a multitensor on dg 1-graphs, see below. In this way, we avoid use of rather ``non-canonical'' constructions such as  ``substitutions'' in the definition of the 2-operadic composition for the the twisted tensor product 2-operad, and in its action on $\Cat_\dg^\coh(\k)$. 

The key observation, from which the operad structure on the 2-collection $\mathcal{O}$ and its action on $\Cat_\dg^\coh(\k)$ follow formally, is possibility to redefine the twisted tensor product at the globular level. That is, there is a multitensor $E_\sotimes$ (a lax-monoidal structure [BW])  on the category of dg 1-graphs so that any strict 1-category enriched in $(\Cat_\dg(\k),\sotimes)$ is the same that a 1-category enriched in $E_\sotimes$. 
Next, one constructs a monad called $\Gamma E_\sotimes$ in loc.cit., acting on dg 2-graphs from the multitensor $E_\sotimes$, and the operad $\mathcal{O}$ is encoded in this monad. In loc.cit. the authors extensively use the cartesianity of the monad $\Gamma E_\sotimes$ to ``decode'' the operad from it. 
Note that these ideas can't be applied to dg case directly, as we deal with non-cartesian monads and non-cartesian multitensors, though a suitable refinement of them can certainly be applied, and it improves and generalises our results. More precisely, the cartesianity is replaced by a grading by the set-enriched 2-level graphs which agree with the monad compositions. 
We hope to discuss these ideas in detail elsewhere.

\comment

\subsection{\sc }\label{introbraces}
Here we provide another, rather informal, argument for fulfilment of Theorem \ref{theorem3}.

We refer the reader to Section \ref{section11} for definition of the dg category $C\sotimes D$, as a dg category generated by $\{\id_c\otimes D\}_{c\in C}$, $\{C\otimes\id_d\}_{d\in D}$, and $\varepsilon(f;\ g_1,\dots,g_n)$. 
The differential $d(\varepsilon(f;\ g_1,\dots,g_n))$ is given by \eqref{eqd1}, and the relation for $\varepsilon(f_2f_1;\ g_1,\dots,g_n)$ is given in  \eqref{eqsuper}.

What makes the existence of the twisted composition dg functor $M$ possible is a complete similarity between these formulas for $\sotimes$ and the well-known identities for the brace operations $f\{g_1,\dots,g_n\}$ on the cochain complex $C^\udot(A,A)[1]$, see [GJ], [NT], [Ts]. Recall them for reader's reference:

\begin{equation}\label{brx1}
\begin{aligned}
\ &[d,f\{g_1,\dots,g_n\}]=(-1)^{|g_1||f|+|f|+1}g_1\cdot f\{g_2,\dots,g_n\}+\\
&\sum_{i=1}^{n-1}(-1)^{|f|+1+\sum_{j=1}^{i}(|g_i|+1)}f\{g_1,\dots, g_ig_{i+1},\dots,g_n\}+(-1)^{|f|+\sum_{i=1}^{n-1}(|g_i|+1)}f\{g_1,\dots,g_{n-1}\}\cdot g_n
\end{aligned}
\end{equation}
and
\begin{equation}\label{brx2}
(f_1\cdot f_2)\{g_1,\dots,g_n\}=\sum_{k=0}^n(-1)^{(|f_2|-1)(\sum_{j=1}^k|g_k|-k)}f_1\{g_1,\dots,g_k\}\cdot f_2\{g_{k+1},\dots,g_n\}
\end{equation}
(where $|f|=|f|_0+n$ is the degree of $f\colon A^{\otimes n}\to A$ in $C^\udot(A,A)$; $|f|_0$ denotes the degree of $f\colon A^{\otimes n}\to A$ as a linear map).

To model more general categorical complexes $\Coh_\dg(C,D)(F,G)$, one considers $C^\udot(A,A)$, not $C^\udot(A,A)[1]$, as a pattern. 
The signs in the corresponding formulas \eqref{eqd1} and \eqref{eqsuper} for $C\sotimes D$ are slightly different, because of the degree shift by [1], see Appendix \ref{app13} and Remark \ref{rembraceshift}.

Recall the following nice result which is due to B.Tsygan. Consider the map
\begin{equation}\label{maptsx}
C^\udot(A,A)[1]\to C^\udot(C^\udot(A,A))[1]
\end{equation}
defined as 
$$
D\mapsto\sum_{k\ge 0}D^{(k)}
$$
where
\begin{equation}
D^{(k)}(D_1,\dots,D_k)=D\{D_1,\dots,D_k\}
\end{equation}
Then it is a map of $\Br$-algebras, see [Ts, Prop.3 and Prop.4].

Note that $M\colon \Coh_\dg(D,E)\sotimes\Coh_\dg(C,D)\to\Coh_\dg(C,E)$ defines, by adjunction \eqref{eqx01}, a dg functor
$$
M_1\colon \Coh_\dg(D,E)\to\Coh_\dg(\Coh_\dg(C,D),\Coh_\dg(C,E))
$$
In this way, Theorem \ref{theorem3} is a generalisation of  the fact that the map \eqref{maptsx} is a map of $\Br$-algebras.

\endcomment

\subsection{\sc Organisation of the paper}
Below we outline the contents of the individual Sections. 

In Section \ref{section1}, we recall the definition and the basic properties of coherent natural transformations $\Coh(F,G)$ for $F,G\colon C\to D$ dg functors, define the dg category $\Coh_\dg(C,D)$, and recall the construction of the twisted tensor product $C\sotimes D$ from [Sh3], as well as its basic properties. We also define complexes $\mathcal{O}(n_1,\dots,n_k)$ and prove their contractibility. Later in Section \ref{section2opn} we show that $\mathcal{O}(n_1,\dots,n_k)$ are the components of a 2-operad $\mathcal{O}$.

In Section \ref{sectionassocmain}, we construct a one-side associativity map for $(\Cat_\dg(\k),\sotimes)$. We show that the structure we get fulfils the axioms of a skew monoidal category, [S], [LS], [BL]. Moreover, this skew monoidal category is {\it perfect}, see Definition \ref{defskewperf}. We prove in Proposition \ref{propskew} that the coherence theorem for perfect skew-monoidal categories is exactly analogous to the classical MacLane coherence for monoidal categories, and is therefore much simpler than the general case of loc.cit. 
Moreover, we define skew closed categories [S] and discuss the general categorical set-up of Section \ref{closedskewmonoidalintro}. It gives rise to an explicit expression for the twisted composition $M$ and to a proof of Theorem \ref{theorem3}.

In Section \ref{section2opn}, we apply the toolkit developed in Section \ref{sectionassocmain} to a proof of Theorem  \ref{theorem1}. We equip the collection of complexes $\mathcal{O}(n_1,\dots,n_k)$ with a structure of a 2-operad $\mathcal{O}$, show that this 2-operad is contractible, and that it acts on the pre-2-category $\Cat_\dg^\coh(\k)$.

Appendix A contains detailed proofs of two technical Theorems \ref{theorassoc} and \ref{theorskew}.

We briefly recall some basic definitions related to higher operads in Appendix \ref{sectionba}.

\subsection*{} 
\subsubsection*{\sc Acknowledgements}
The author is thankful to Michael Batanin and to Dima Tamarkin for illuminating discussions on their work, and to 
the anonymous referees for their careful reading of the manuscript and for many valuable suggestions. 

The work was financed 
by the Support grant
for International Mathematical Centres Creation and Development, by the Ministry of Higher Education and Science
of Russian Federation and PDMI RAS agreement № 075-15-2022-289 issued on April 6, 2022.

\section{\sc The twisted tensor product and a 2-operad $\mathcal{O}$}\label{section1}
Here we recall basic facts on the twisted tensor product $C\sotimes D$ of small dg categories, introduced in [Sh3]. After that, we define complexes $\mathcal{O}(n_1,\dots,n_k)$, by means of the twisted tensor product:
$$
\mathcal{O}(n_1,\dots,n_k)=I_{n_k}\sotimes (I_{n_{k-1}}\sotimes(\dots\sotimes (I_{n_2}\sotimes I_{n_1})\dots))(\min,\max)
$$
where $I_n$ is (the $\k$-linear span of) the length $n$ interval category, and $\min$ (resp., $\max$) is the minimal (resp., the maximal) object.
Later in Section \ref{section2opn} we prove that these complexes are the components of a 2-operad $\mathcal{O}$. Here we show that each of these complexes is canonically quasi-isomorphic to $\k[0]$.

\subsection{\sc Coherent natural transformations}\label{sectionapp1}
We recall the definition of a {\it coherent natural transformation} $F\Rightarrow G\colon C\to D$, where $C,D$ are small dg categories over $\k$, and $F,G$ are dg (resp., $A_\infty$) functors $C\to D$. 

Let $C,D\in \Cat_\dg(\k)$, and let $F,G\colon C\to D$ be dg functors. Associate with $(F,G)$ a cosimplicial set $\coh_\ldot(F,G)$, as follows.

Set $$\coh_0(F,G)=\prod_{X\in C}\Hom_D(F(X),G(X))$$ and 
\begin{equation}
\coh_n(F,G)=\prod_{X_0,X_1,\dots,X_n\in C}\underline{\Hom}_{\k}\Big(C(X_{n-1},X_n)\otimes \dots\otimes C(X_0,X_1),\ D\big(F(X_0),G(X_n)\big)\Big)
\end{equation}
where $\underline{\Hom}_{\k}$ is the internal Hom in the category of complexes over $\k$.

The coface maps 
$$
d_0,\dots,d_{n+1}\colon \coh_n(F,G)\to\coh_{n+1}(F,G)
$$
and the codegeneracy maps
$$
\eta_0,\dots,\eta_n\colon \coh_{n+1}(F,G)\to\coh_n(F,G)
$$
are defined in the standard way, see e.g. [T2, Sect. 3].

For example, recall the coface maps $d_0,d_1,d_2\colon \coh_1(F,G)\to \coh_2(F,G)$. For $$\Psi\in \prod_{X_0,X_1\in C}
\underline{\Hom}_\k\big(\Hom_C(X_0,X_1),\ \Hom_D(F(X_0),G(X_1)\big)
$$
one has:
\begin{equation}
\begin{aligned}
\ &d_0(\Psi)(X_0\xrightarrow{f_1}X_1\xrightarrow{f_2}X_2)=G(X_1\xrightarrow{f_2}X_2)\circ \Psi(X_0\xrightarrow{f_1}X_1)\\
&d_1(\Psi)(X_0\xrightarrow{f}X_1\xrightarrow{g}X_2)=\Psi(X_0\xrightarrow{f_2f_1}X_2)\\
&d_2(\Psi)(X_0\xrightarrow{f}X_1\xrightarrow{g}X_2)=\Psi(X_1\xrightarrow{f_2}X_2)\circ F(X_0\xrightarrow{f_1}X_1)
\end{aligned}
\end{equation}

One defines $\Coh(F,H)$ as the {\it totalization} of this cosimplicial dg vector space:
\begin{equation}\label{riehl}
\Coh(F,G)=\Tot(\coh_\ldot(F,G))=\int_{n\in\Delta}\underline{\Hom}_\k(C_\ldot(\k\Delta(-,n)),\coh_n(F,G))
\end{equation}
see e.g. [R, Ch.4].
Here
$C_\ldot(-)$ denotes the reduced Moore complex of a simplicial abelian group, and $\int_\mathscr{C}(-)$ denotes the {\it end} of a functor $\mathscr{C}\times\mathscr{C}^\op\to\mathscr{E}$.

This is a topologist's definition. It is the best one, in particular, it encodes all signs, and it makes sense for an arbitrary symmetric monoidal enrichment.

Below we unwind this definition, making it more explicit.

The definition given above is equivalent to defining $\Coh(F,G)$ the total complex of the bicomplex, one of whose differentials is the differential $\delta_\dg$ (coming from the differentials on the underlying dg vector spaces), and another differential is the cochain differential with the corrected signs:
\begin{equation}\label{cochaindiff}
\delta=\varepsilon_0d_0-\varepsilon_1d_1+\varepsilon_2d_2-\dots+(-1)^{n+1}\varepsilon_{n+1}d_{n+1}\colon \coh_{n}(F,G)\to\coh_{n+1}(F,G)
\end{equation}
Here $\varepsilon_i=\pm 1$ are sign corrections, depending on degrees of the cochains and of the arguments, see \eqref{diffsigns} below. 
We denote the cochain complex with the corrected signs by $\tilde{C}^\udot(-)$. Then
\begin{equation}
\Coh(F,G)=\Tot_{\Pi}\big(\tilde{C}^\udot(\coh_\ldot(F,G))\big)
\end{equation}
where $\Tot_{\Pi}(-)$ stands for the total {\it product} complex of a bicomplex, with the differential 
$$
\delta_\tot(\Psi)=(-1)^{|\Psi|_0}\delta+\delta_\dg
$$
We have the following formulas for the differentials (the reader is referred to [Sh6], Appendix D for a discussion of the signs):
\footnote{Here and below we denote by $|\Psi|_0$ the degree of $\Psi$ as a linear map; the total (Hochschild) degree is $|\Psi|=|\Psi|_0+n$, where $n$ is the number of arguments of $\Psi$. For the arguments $f_i$, we denote by $|f_i|$ its underlying grading in the complex of morphisms.}
\begin{equation}\label{firstdg}
\delta_\dg(\Psi)(f_n,\dots,f_1)=d_\dg(\Psi(f_n,\dots,f_1))-\sum_{i=1}^n(-1)^{|\Psi|+|f_n|+\dots+|f_{i+1}|+n-i}(\Psi(f_n,\dots,d_\dg(f_i),\dots,f_1)
\end{equation}
where in the r.h.s. $d_\dg$ stand for the differentials in the complexes of morphisms.
\comment
{\small \begin{remark}
{\rm Note that one should rescale by a sign our favourite cocycles, such as $m(f_2,f_1)=f_2\circ f_1$, to maintain $d_\dg(-)=0$. For example, one defines 
\begin{equation}\label{sc1}
M(f_2,f_1)=(-1)^{2|f_2|+|f_1|}m(f_2,f_1)
\end{equation}
in $C^\udot(A,A)$, and
\begin{equation}\label{sc1bis}
M[1](f_2,f_1)=(-1)^{|f_2|}m(f_2,f_1)
\end{equation}
in $C^\udot(A,A)[1]$.

In general, let $V$ be a complex, and let $\Psi\colon V^{\otimes n}\to V$ a cochain. Define 
\begin{equation}\label{sc2}
(d_\dg^\prime\Psi)(v_n,\dots,v_1)=d_\dg(\Psi(v_n,\dots,v_1))-\sum_{i=1}^n(-1)^{|\Psi|_0+|v_n|+\dots+|v_{i+1}|}\Psi(v_n,\dots,d_\dg v_i,\dots,v_1)
\end{equation}
Then the correspondence
\begin{equation}\label{sc3}
\tilde{\Psi}(f_n,\dots,f_1)=(-1)^{n|f_n|+\dots+|f_1|}\Psi(f_n,\dots,f_1)
\end{equation}
intertwines the differentials $d_\dg$ and $d_\dg^\prime$.
If we consider the shifted by 1 chain complex, the correct sign becomes 
\begin{equation}\label{sc4}
\tilde{\Psi}[1](f_n,\dots,f_1)=(-1)^{(n-1)|f_n|+(n-2)|f_{n-1}|+\dots+|f_2|}\Psi(f_n,\dots,f_1)
\end{equation}
Finally, the passage from the cochain complex $C^\udot(A,A)$ to the shifted cochain complex $C^\udot(A,A)[1]$ is given by 
\begin{equation}\label{sc5}
\Psi(f_n,\dots,f_1)\mapsto (-1)^{|f_1|+\dots+|f_n|}\Psi(f_n,\dots,f_1)
\end{equation}
and then in the shifted cochain complex one still has
\begin{equation}\label{sc6}
\delta_\dg(\Psi)(f_n,\dots,f_1)=d_\dg(\Psi(f_n,\dots,f_1))-\sum_{i=1}^n(-1)^{|\Psi|^\prime+|f_n|+\dots+|f_{i+1}|+n-i}(\Psi(f_n,\dots,d_\dg(f_i),\dots,f_1)
\end{equation}
where $|\Psi|^\prime=|\Psi|-1=|\Psi|_0+n-1$.
}
\end{remark}
}

\endcomment
The differential \eqref{cochaindiff}, for a chain of morphisms
$$
X_0\xrightarrow{f_1}X_1\xrightarrow{f_2}X_2\xrightarrow{f_3}\dots \xrightarrow{f_{n-1}}X_{n-1}\xrightarrow{f_n}X_n
$$
reads:
\begin{equation}\label{diffsigns}
\begin{aligned}
\ &(\delta\Psi)(f_n,\dots,f_1)=\\
&(-1)^{|\Psi||f_n|+|\Psi|}G(f_n)\circ \Psi(f_{n-1},\dots,f_{1})+(-1)^{|\Psi|+1+\sum_{i=2}^{n}(|f_i|+1)}\Psi(f_n,\dots,f_2)\circ F(f_1)+\\
&\sum_{i=1}^{n-1}(-1)^{|\Psi|+\sum_{j=i+1}^n(|f_j|+1)}\Psi(f_n,\dots,f_{i+1}f_i,\dots,f_1)
\end{aligned}
\end{equation}
The reader is referred to [Sh6], Appendix D, for a detailed discussion of signs.

\comment
This formula holds for the {\it shifted} cochain complex $\Coh(F,G)[1]$. The passage to $\Coh(F,G)$ is given by \eqref{sc5}, it only affects the two extreme terms. One gets:
\begin{equation}\label{diffsignsbis}
\begin{aligned}
\ &(\delta\Psi)(f_n,\dots,f_1)=\\
&(-1)^{|\Psi||f_n|+|\Psi|+|f_n|+1}G(f_n)\circ \Psi(f_{n-1},\dots,f_{1})+(-1)^{|\Psi|+|f_1|+\sum_{i=2}^{n}(|f_i|+1)}\Psi(f_n,\dots,f_2)\circ F(f_1)+\\
&\sum_{i=1}^{n-1}(-1)^{|\Psi|+1+\sum_{j=i+1}^n(|f_j|+1)}\Psi(f_n,\dots,f_{i+1}f_i,\dots,f_1)
\end{aligned}
\end{equation}
\endcomment

\subsection{\sc }
For fixed small dg categories $C,D$, one endows $\Coh_\dg(C,D)$ with a dg category structure. That is, we define the {\it vertical} composition of coherent natural transformations.

The dg category $\Coh_\dg(C,D)$ has the dg functors $F\colon C\to D$ as its objects, and
$$
\Coh_\dg(C,D)(F,G):=\Coh(F,G)
$$
as its $\Hom$-complexes. The composition is defined with a sign correction, as follows.
\sevafigc{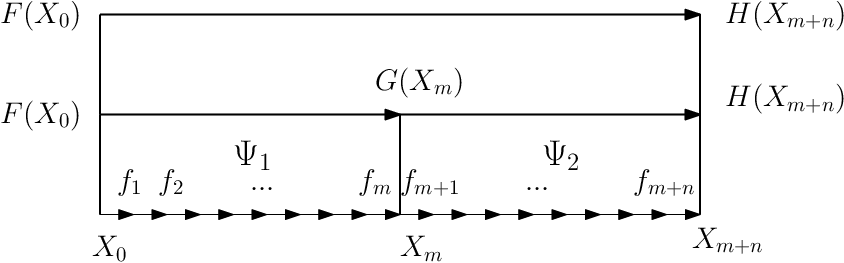}{100mm}{0}{The vertical composition\label{fig2}}
For chains $X_0\xrightarrow{f_1}X_1\xrightarrow{f_2}\dots \xrightarrow{f_m}X_m$ and $X_m\xrightarrow{f_{m+1}}X_{m+1}\xrightarrow{f_{m+2}}\dots\xrightarrow{f_{m+n}}X_{m+n}$ in $C$ one sets
\begin{equation}\label{compv}
(\Psi_2\circ_v\Psi_1)(f_{m+n},\dots,f_1,f_0)=(-1)^{(\sum_{i=m+1}^{m+n}|f_i|+n)|\Psi_1|}\Psi_2(f_{m+n},\dots,f_{m+1})\circ \Psi_1(f_m,\dots,f_1)
\end{equation}
One has
\begin{equation}\label{lr2}
\delta_\dg(\Psi_2\circ_v\Psi_1)=(\delta_\dg\Psi_2)\circ_v\Psi_1+(-1)^{|\Psi_2|}\Psi_2\circ_v\delta_\dg\Psi_1
\end{equation}
and
\begin{equation}\label{lr2bis}
{\delta}(\Psi_2\circ_v\Psi_1)=({\delta}\Psi_2)\circ_v\Psi_1+(-1)^{|\Psi_2|}\Psi_2\circ_v{\delta}\Psi_1
\end{equation}

\vspace{1mm}

\subsection{\sc }
The $A_\infty$ category $\Coh_{A_\infty}(C,D)$ has the $A_\infty$ functors $C\to D$ as its objects and
$$
\Coh_{A_\infty}(C,D)(F,G):=\Coh(F,G)
$$
as its $\Hom$-complexes, and the composition is defined similarly.

The construction of $\Coh_*(C,D)$ is functorial with respect to dg (corresp. $A_\infty$) functors $f\colon C_1\to C$ and $g\colon D\to D_1$, and gives rise to dg (corresp., $A_\infty$) functors
$$
f^*\colon\Coh_*(C,D)\to \Coh_*(C_1,D),\ \ g_*\colon \Coh_*(C,D)\to\Coh_*(C,D_1)
$$
where $*=\dg$ (corresp., $*=A_\infty$).

The following result has a fundamental value:
\begin{prop}\label{fquis}
Let dg functors $f\colon C_1\to C$ and $g\colon D\to D_1$ be quasi-equivalences. Then the dg functors $f^*\colon \Coh_{A_\infty}(C,D)\to\Coh_{A_\infty}(C_1,D)$ and $g_*\colon \Coh_{A_\infty}(C,D)\to\Coh_{A_\infty}(C,D_1)$ are quasi-equivalences. 
\end{prop}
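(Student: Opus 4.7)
My plan is to verify conditions (Q1) and (Q2) separately, treating the easier $g_*$ first and then $f^*$. The proof of (Q1) rests on the bicomplex presentation $\Coh(F,G)=\Tot_\Pi(\tilde{C}^\udot(\coh_\ldot(F,G)))$ together with a filtration by cosimplicial degree; the proof of (Q2) is where the passage from dg to $A_\infty$ functors is essential, since only $A_\infty$ functors can be lifted or precomposed through a quasi-equivalence.

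For $g_*$, the cosimplicial products on both sides are indexed by the same object set $\Ob(C)$. At each cosimplicial level $n$ the map $\coh_n(F,G)\to\coh_n(gF,gG)$ is induced by post-composition with the quasi-isomorphism $g\colon D(F(X_0),G(X_n))\to D_1(gF(X_0),gG(X_n))$. Over the field $\k$ every quasi-isomorphism of complexes is a chain homotopy equivalence, so $\underline{\Hom}_\k(V,-)$ and arbitrary products preserve quasi-isomorphisms, and we obtain a level-wise qi. Filtering the bicomplex by cosimplicial degree, the induced map on $E_1$-pages is an isomorphism, and a standard spectral-sequence argument yields (Q1) for $g_*$. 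For (Q2), given an $A_\infty$-functor $H\colon C\to D_1$ I would inductively construct an $A_\infty$-functor $F\colon C\to D$ together with a coherent quasi-isomorphism $gF\Rightarrow H$: on objects use essential surjectivity of $H^0(g)$, then lift the higher structure maps of $H$ through $g$ on Hom-complexes by the usual $A_\infty$-obstruction theory, each step's obstruction lying in a contractible complex.

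For $f^*$ the termwise argument breaks because the cosimplicial products range over $\Ob(C)$ on one side and $\Ob(C_1)$ on the other. My plan is to factor any quasi-equivalence $f$ (up to $A_\infty$-equivalence) into a bijective-on-objects quasi-isomorphism followed by a fully faithful essentially-surjective inclusion. The first factor is handled termwise exactly as for $g_*$. For the second factor, I would use that every object of the larger category is $A_\infty$-equivalent to one in the subcategory, and transfer cochains along chosen $A_\infty$-equivalences to build an inverse quasi-isomorphism on $\Coh$-complexes; a homotopy-transfer argument then shows this inverse is well-defined up to coboundary. Property (Q2) for $f^*$ follows formally by precomposing with an $A_\infty$-quasi-inverse of $f$, which is produced by the same machinery applied to $f$ itself.

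The main obstacle I expect is the essentially-surjective-inclusion step in (Q1) for $f^*$: convergence of the spectral sequence for the product totalization $\Tot_\Pi$ is not automatic in the presence of unbounded cosimplicial degree, and the concrete transfer of individual cocycles must be carried out coherently with the signs fixed by \eqref{diffsigns} and \eqref{compv}. Working with $A_\infty$-functors rather than strict dg functors is precisely what makes these transfers possible, since the higher coherences produced inductively are $A_\infty$-structure maps with no strict dg counterpart.
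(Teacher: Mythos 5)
The paper itself offers no proof of Proposition 1.4 --- the statement is followed immediately by \qed --- so the only thing to compare against is the sketch the author left in a commented-out draft, which handles the (Q1) part by observing that $\coh_\ldot(F,G)$ is a cosimplicial cobar construction, hence Reedy fibrant, and that totalization carries a levelwise weak equivalence of Reedy fibrant cosimplicial dg vector spaces to a quasi-isomorphism. Your overall strategy (levelwise quasi-isomorphism plus totalization for (Q1); $A_\infty$-obstruction theory and quasi-inverses for (Q2)) is the natural one and agrees with that intent, but as written it has one genuine unresolved gap, which you yourself flag without closing: the passage from a levelwise quasi-isomorphism of $\coh_\ldot(-,-)$ to a quasi-isomorphism of $\Tot_\Pi$ of the associated bicomplex. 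The filtration by cosimplicial degree gives a complete, non-bounded filtration, and for such filtrations an isomorphism on $E_1$ does \emph{not} by itself imply an isomorphism on the cohomology of the product total complex (this is exactly the situation where conditional/complete convergence in the sense of Boardman can fail). Even over a field, where each levelwise map is a chain homotopy equivalence, these homotopy equivalences need not assemble coherently, so the argument does not reduce to a formal one. The missing ingredient is precisely the Reedy fibrancy of the cosimplicial cobar construction, which makes the tower of partial totalizations $\{\Tot_n\}$ a tower of fibrations with vanishing $\lim^1$ and lets you conclude. You should either import that statement or prove the relevant $\lim^1$-vanishing by hand; without it the (Q1) half of the argument is incomplete for both $g_*$ and $f^*$.

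Two smaller points. First, in your factorization of $f$ the correct intermediate object is the dg category $\tilde C$ with $\Ob\tilde C=\Ob C_1$ and $\tilde C(x,y)=C(f(x),f(y))$, so that $C_1\to\tilde C$ is bijective on objects and a hom-wise quasi-isomorphism (quasi-fully-faithfulness of $f$), and $\tilde C\to C$ is quasi-fully-faithful with $H^0$ essentially surjective; ``bijective on objects'' cannot be arranged for $f$ itself since $f$ need not be injective on objects. Second, your treatment of the second factor (restriction along a quasi-fully-faithful, $H^0$-essentially-surjective inclusion) is the thinnest part of the proposal: ``transfer cochains along chosen $A_\infty$-equivalences'' is a homotopy-cofinality assertion that needs an actual construction, e.g.\ an explicit $A_\infty$-quasi-inverse of $\tilde C\hookrightarrow C$ (which exists precisely because you work with $\Coh_{A_\infty}$ rather than $\Coh_\dg$ --- this is also why the dg version of the proposition requires cofibrancy hypotheses, as in Corollary 1.5). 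Your (Q2) argument for $g_*$ via obstruction theory is standard and fine.
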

\begin{proof}
It is proven in [LH, Ch.8] that $\Coh_{A_\infty}(C, D)$ is bi-functorial with respect to the $A_\infty$ functors.
It follows from [LH, Theorem 9.2.0.4] that a weak equivalence can be inverted as an $A_\infty$ functor.
The statement follows from these two results.
\end{proof}

\begin{coroll}\label{corfquis}
Let $C$ and $C_1$ be cofibrant, and $f,g$ as above. Then the dg functors \\$f^*\colon \Coh_{\dg}(C,D)\to\Coh_{\dg}(C_1,D)$ and $g_*\colon \Coh_{\dg}(C,D)\to\Coh_{\dg}(C,D_1)$ are quasi-equivalences of dg categories. 
\end{coroll}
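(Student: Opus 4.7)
The plan is to deduce the dg statement from its $A_\infty$ counterpart, Proposition \ref{fquis}, by a comparison argument. For any pair of strict dg functors $F, G : C \to D$, the complex $\Coh(F, G)$ computed in $\Coh_{A_\infty}(C, D)$ coincides with the one in $\Coh_{\dg}(C, D)$, and the vertical composition formula \eqref{compv} is the same in both. Hence there is a tautological embedding of dg categories
$$
\iota_{C,D}\colon \Coh_{\dg}(C, D) \hookrightarrow \Coh_{A_\infty}(C, D),
$$
which is the identity on Hom-complexes, so condition (Q1) for $\iota_{C,D}$ holds automatically.

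The central claim is that, \emph{for $C$ cofibrant, $\iota_{C,D}$ is a quasi-equivalence}. Only (Q2) remains to check, and this is a rectification assertion: every $A_\infty$ functor $F : C \to D$ is isomorphic in $H^0(\Coh_{A_\infty}(C, D))$ to a strict dg functor $\tilde F$. One can verify this by induction along the cell-attachment presentation of a cofibrant $C$: cofibrant objects for the Tabuada model structure are retracts of standard cell (i.e.\ quasi-free) dg categories, so the higher Taylor components $F_n$ of $F$ can be straightened one cell at a time via the lifting property for the trivial fibrations of $\Cat_\dg(\k)$. Alternatively one cites a standard rectification statement for $A_\infty$ functors out of cofibrant sources (cf.\ To\"en, Canonaco--Ornaghi--Stellari, Faonte).

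Granted the claim, the corollary follows from 2-out-of-3 applied to the commutative square
$$
\xymatrix{
\Coh_{\dg}(C, D) \ar[r]^{f^*} \ar[d]_{\iota_{C,D}} & \Coh_{\dg}(C_1, D) \ar[d]^{\iota_{C_1,D}} \\
\Coh_{A_\infty}(C, D) \ar[r]_{f^*} & \Coh_{A_\infty}(C_1, D)
}
$$
whose bottom arrow is a quasi-equivalence by Proposition \ref{fquis} and whose vertical arrows are quasi-equivalences by the claim, using cofibrancy of both $C$ and $C_1$. Hence the top $f^*$ is a quasi-equivalence as well. The argument for $g_*$ is identical, via the square with vertical arrows $\iota_{C,D}$ and $\iota_{C,D_1}$; here only cofibrancy of $C$ is required.

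The main obstacle is the rectification step above: it is the sole place where the cofibrancy hypothesis enters, and it provides the bridge from the $A_\infty$ setting (where Proposition \ref{fquis} holds unconditionally) to the strictly dg setting. If a fully self-contained proof is desired, one would have to carry out the cellular induction in detail; otherwise invoking the known rectification theorem gives the shortest path.
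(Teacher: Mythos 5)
Your argument is correct and is essentially the intended one: the paper states the corollary without proof, but its introduction invokes [Fa, Th.1.7] for precisely the comparison you use, namely that for a cofibrant source the fully faithful (indeed, identity-on-Hom-complexes) inclusion $\Coh_\dg(C,D)\hookrightarrow\Coh_{A_\infty}(C,D)$ is a quasi-equivalence (rectification of $A_\infty$-functors), after which two-out-of-three against Proposition \ref{fquis} gives the claim. The one soft spot is your sketch of the cellular induction: straightening the higher Taylor components of an $A_\infty$-functor is not literally a lifting problem against trivial fibrations in $\Cat_\dg(\k)$, so you should rest the rectification step on the cited theorem (Faonte, or Canonaco--Ornaghi--Stellari) rather than on that ad hoc induction; with that reference the proof is complete.
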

\begin{proof}
It is proven e.g. in [Sh3], Lemma 4.2 and Corollary 4.3.
\end{proof}

\subsection{\sc The twisted tensor product}\label{section11}
\subsubsection{\sc The construction}\label{section111}
Let $C$ and $D$ be two small dg categories over $\k$. We define {\it the twisted dg tensor product} $C\sotimes D$, as follows.

The set of objects of $C\sotimes D$ is $\Ob(C)\times \Ob(D)$.
Consider the graded $\k$-linear category $F(C,D)$ with objects $\Ob(C)\times \Ob(D)$ freely generated by $\{f\otimes\id_d\}_{f\in\Mor(C), d\in D}$, $\{\id_c\otimes g\}_{c\in C, g\in \Mor(D)}$, and by the new morphisms $\varepsilon(f;g_1,\dots,g_n)$, specified below. (Below we write $\{C\otimes \id_d\}_{d\in D}$ assuming $\{f\otimes\id_d\}_{f\in\Mor(C), d\in D}$ etc).

Let
$c_0\xrightarrow{f}c_1$ be a morphism in $C$, and let $$d_0\xrightarrow{g_1}d_1\xrightarrow{g_2}\dots\xrightarrow{g_n}d_n$$

are chains of composable maps in $D$. For any such chains, with $n\ge 1$, one introduces a new morphism
$$
\varepsilon(f;g_1,\dots,g_n)\in \Hom((c_0,d_0),(c_1, d_n))
$$
of degree
\begin{equation}
\deg \varepsilon(f;g_1,\dots,g_n)=-n+\deg f+\sum\deg g_j
\end{equation}

The new morphisms $\varepsilon(f;g_1,\dots,g_n)$ are subject to the following identities:

\begin{itemize}
\item[$(R_1)$]
$(\id_c\otimes g_1)* (\id_c\otimes g_2)=\id_c\otimes (g_1g_2)$, $(f_1\otimes\id_d)*(f_2\otimes\id_d)=(f_1f_2)*\id_d$
\item[$(R_2)$] $\varepsilon(f;g_1,\dots,g_n)$ is linear in each argument,
\item[$(R_3)$] 
$\varepsilon(f; g_1,\dots,g_n)=0$ if $g_i=\id_y$ for some $y\in \Ob(D)$ and for some $1\le i\le n$,\\
$\varepsilon(\id_x; g_1,\dots,g_n)=0$ for $x\in\Ob(C)$ and $n\ge 1$,
\item[$(R_4)$]
for any $c_0\xrightarrow{f_1}c_1\xrightarrow{f_2}c_2$ and $d_0\xrightarrow{g_1}d_1\xrightarrow{g_2}\dots\xrightarrow{g_N}d_N$
one has:
\begin{equation}\label{eqsuper}
\varepsilon(f_2f_1;g_1,\dots,g_N)=\sum_{0\le m\le N}(-1)^{|f_1|(\sum_{j=m+1}^{N}|g_j|+N-m)}\varepsilon(f_2;g_{m+1},\dots,g_N)\star\varepsilon(f_1;g_1,\dots,g_m)
\end{equation}
\end{itemize}

To make $F(C,D)$ a dg category, one should define the differential $d\varepsilon(f;g_1,\dots,g_n)$.

For $n=1$ we set:
\begin{equation}\label{eqd0}
\begin{aligned}
\ & -d\varepsilon (f;g)+\varepsilon(df;g)+(-1)^{|f|}\varepsilon(f;dg)=\\
& (-1)^{|f||g|}(\id_{c_1}\otimes g)\star (f\otimes \id_{d_0})
-(f\otimes \id_{d_1})\star (\id_{c_0}\otimes g)
\end{aligned}
\end{equation}
For $n\ge 2$:
\begin{equation}\label{eqd1}
\begin{aligned}
\ &\varepsilon(df;g_1,\dots,g_n)=\\
&d\varepsilon({f;g_1,\dots,g_n})-\sum_{j=1}^n(-1)^{|f|+|g_n|+\dots+|g_{j+1}|+n-j} \varepsilon({f;g_1,\dots,dg_j,\dots,g_n})\big)+(-1)^{|f|+n-1}\Big[\\
&
(-1)^{|f||g_n|+|f|}(\id_{c_1}\otimes g_n)\star \varepsilon({f;g_1,\dots,g_{n-1}})
+(-1)^{|f|+\sum_{i=2}^n(|g_i|+1)+1}\varepsilon({f;g_2,\dots,g_n})\star (\id_{c_0}\otimes g_1)+\\
&\sum_{i=1}^{n-1} (-1)^{|f|+\sum_{j=i+1}^n(|g_j|+1)  } \varepsilon({f;g_1,\dots,g_{i+1}\circ g_i,\dots,g_n})\Big]
\end{aligned}
\end{equation}

We have:
\begin{lemma}\label{lemma1}
One has $d^2=0$. The differential agrees with relations ($R_1$)-($R_4$) above. 
\end{lemma}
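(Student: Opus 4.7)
The plan is to verify both assertions on the free generators of $F(C,D)$ and then extend to the entire graded category by the Leibniz rule and $\k$-linearity. The basic generators $f\otimes\id_d$ and $\id_c\otimes g$ inherit their differentials from $C$ and $D$, so $d^2$ vanishes on them trivially. Compatibility of $d$ with $(R_1)$ and $(R_2)$ is automatic from the way the differential is defined on these generators. The substantive content of the lemma therefore concerns the new generators $\varepsilon(f;g_1,\dots,g_n)$ and the relations $(R_3)$ and $(R_4)$.

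For $d^2\varepsilon(f;g_1,\dots,g_n)=0$, I would apply $d$ to the right-hand side of \eqref{eqd1}, use graded Leibniz for $\star$, and expand each inner $d\varepsilon$ recursively via \eqref{eqd1}. This produces terms of three types: (i) terms involving $d^2 f$ or $d^2 g_i$, which vanish since $d^2=0$ in $C$ and $D$; (ii) terms in which two internal differentials act on different arguments, which collect in cancelling pairs by the graded Leibniz sign rule; (iii) terms with two boundary-face contributions (prepending $\id_{c_1}\otimes g_n$, appending $\id_{c_0}\otimes g_1$, or contracting $g_{i+1}\circ g_i$), which cancel among themselves by the standard Hochschild-type combinatorics recalled in Appendix \ref{sectionapp}. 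The signs in \eqref{eqd1} have been chosen precisely to reproduce the sign pattern of the Hochschild cochain differential, so the cancellation is identical to the familiar $\delta^2=0$ for $C^\udot(A,A)$.

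Compatibility with $(R_3)$ is handled by induction on $n$. If some $g_i=\id$, the two contraction terms of \eqref{eqd1} involving $g_i g_{i-1}$ and $g_{i+1} g_i$ reduce to $\varepsilon(f;\dots,\widehat{g_i},\dots)$ with opposite signs and cancel, while the remaining contractions produce $\varepsilon$'s with an $\id$ argument that vanish by the inductive hypothesis; the face terms are dispatched in the same way. The case $f=\id_c$ is parallel, using that the two extreme face contributions collapse into equal naive natural transformations with opposite signs, together with the already established vanishing for smaller $n$.

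The main obstacle is the compatibility of $d$ with the superposition relation $(R_4)$. The plan is to apply $d$ to both sides of \eqref{eqsuper}. On the left, $d\varepsilon(f_2 f_1;g_1,\dots,g_N)$ expands by \eqref{eqd1} with $f=f_2 f_1$ and $df=(df_2)f_1+(-1)^{|f_2|}f_2(df_1)$. On the right, graded Leibniz for $\star$ produces a double sum in which each factor $d\varepsilon(f_j;\dots)$ is again expanded via \eqref{eqd1}. The crux is a sign-careful matching of the two resulting expansions: the inner contraction terms $g_i g_{i+1}$ on the left match pairs of contraction terms coming from both $\varepsilon(f_2;\dots)$ and $\varepsilon(f_1;\dots)$ on the right, while the two extreme boundary terms on the left correspond to the extreme summands ($m=N$ and $m=0$) of \eqref{eqsuper}. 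This matching is precisely the categorified avatar of the brace-algebra identity \eqref{brx2} applied to $d\bigl((f_1\cdot f_2)\{g_1,\dots,g_N\}\bigr)$; with the dictionary sketched in Section \ref{introbraces}, the verification reduces to the well-known compatibility of the Hochschild differential with the brace operations. The Koszul sign bookkeeping is the only delicate point and is set up exactly as in Appendix \ref{sectionapp}.
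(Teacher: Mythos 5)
Your plan is correct and is exactly the ``direct check'' the paper leaves unwritten (the lemma is stated with no proof here, the verification being deferred to [Sh3]): reduce everything to the generators $\varepsilon(f;g_1,\dots,g_n)$, get $d^2=0$ and the $(R_3)$-compatibility from the Hochschild-type cancellations, and reduce the $(R_4)$-compatibility to the compatibility of \eqref{brx1} with \eqref{brx2}, which is precisely the dictionary the paper itself sets up in Section \ref{introbraces} and Appendix \ref{sectionapp}. The only thing not actually carried out is the Koszul sign bookkeeping, but you have correctly located where all the cancellations come from, and the paper treats this at the same level of detail.
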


\qed

It is clear that the twisted tensor product $C\sotimes D$ is functorial in each argument, for dg functors $C\to C^\prime$ and $D\to D^\prime$.

Note that the twisted product $C\sotimes D$ is not symmetric in $C$ and $D$.

It is {\it not} true in general that the dg category $C\sotimes D$ is quasi-equivalent to $C\otimes D$, or that these two dg categories are isomorphic as objects of $\Hot(\Cat_\dg(\k))$. See Theorem \ref{theorht} for a result on the homotopy type of $C\sotimes D$. 

\subsubsection{\sc The adjunction}
Our interest in the twisted tensor product $C\sotimes D$ is motivated by the following fact:
\begin{prop}\label{prop1}
Let $C,D,E$ be three small dg categories over $\k$. 
Then there is a 3-natural isomorphism of sets:
\begin{equation}\label{adj1}
\Phi\colon \Fun_{\dg}(C\sotimes D,E)\simeq \Fun_{\dg}(C,\Coh_\dg(D,E))
\end{equation}
\end{prop}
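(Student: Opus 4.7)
The plan is to construct the map $\Phi$ and its inverse $\Psi$ directly from the generators-and-relations presentation of $C\sotimes D$ recalled in Section \ref{section111}, and to show that the defining relations $(R_1)$-$(R_4)$ and the differential formulas \eqref{eqd0}-\eqref{eqd1} encode exactly the structure of a dg functor into $\Coh_\dg(D,E)$. Concretely, given a dg functor $F\colon C\sotimes D\to E$, I would define $\Phi(F)\colon C\to\Coh_\dg(D,E)$ by sending $c\in C$ to the dg functor $\Phi(F)(c)\colon D\to E$, $d\mapsto F(c,d)$, $g\mapsto F(\id_c\otimes g)$, and by sending a morphism $f\colon c_0\to c_1$ of $C$ to the coherent natural transformation whose $n$-th component is
\[
\Phi(F)(f)_n(g_n,\dots,g_1)=\begin{cases}F(f\otimes\id_d)&n=0,\\ F\bigl(\varepsilon(f;g_1,\dots,g_n)\bigr)&n\ge 1,\end{cases}
\]
up to a universal sign dictated by the degree shift conventions of Appendix \ref{sectionapp}. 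Conversely, from $G\colon C\to\Coh_\dg(D,E)$ I would define $\Psi(G)\colon C\sotimes D\to E$ on objects by $(c,d)\mapsto G(c)(d)$, and on the generators of $F(C,D)$ by $\id_c\otimes g\mapsto G(c)(g)$, $f\otimes\id_d\mapsto G(f)_0(d)$, and $\varepsilon(f;g_1,\dots,g_n)\mapsto G(f)_n(g_n,\dots,g_1)$.

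The bulk of the verification is to check that $\Psi(G)$ descends from the free graded category $F(C,D)$ to $C\sotimes D$. Relation $(R_1)$ amounts to the functoriality of $G(c)\colon D\to E$; relation $(R_2)$ is the $\k$-linearity of the individual cochain components; the two clauses of $(R_3)$ correspond respectively to the \emph{reduced} (normalized) nature of the cochain complex used in \eqref{riehl} and to the equality $G(\id_x)=\id_{G(x)}$ in $\Coh_\dg(D,E)$. The essential point is $(R_4)$: matching the expansion
$\varepsilon(f_2f_1;g_1,\dots,g_N)=\sum_{m=0}^{N}(\pm)\varepsilon(f_2;g_{m+1},\dots,g_N)\star\varepsilon(f_1;g_1,\dots,g_m)$
against the identity $G(f_2f_1)=G(f_2)\circ_v G(f_1)$ using the vertical-composition formula \eqref{compv}; here the sign $(-1)^{|f_1|(\sum_{j=m+1}^{N}|g_j|+N-m)}$ in \eqref{eqsuper} is precisely the Koszul sign appearing in \eqref{compv}.

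Next I would verify that $\Psi(G)$ commutes with differentials. Formula \eqref{eqd0} for $n=1$ is exactly the cocycle condition expressing that $\Phi(F)(f)_1$ is a derived natural transformation modulo the $n=0$ component (compare \eqref{diffsigns} with $m=1$); formula \eqref{eqd1} for general $n$ is the unwinding of $\delta_\tot(G(f))_n=G(df)_n$, i.e., the combination of the internal differential \eqref{firstdg} and the reduced Hochschild differential \eqref{diffsigns} on the cosimplicial object $\coh_\ldot(F,G)$. In the opposite direction, the same equalities show that $\Phi(F)$ lands in dg functors. The two constructions $\Phi$ and $\Psi$ are manifestly mutually inverse on objects and on the three types of generators, hence on all of $C\sotimes D$. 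Naturality in $C$, $D$, $E$ is immediate from the functoriality of $\sotimes$ and of $\Coh_\dg(-,-)$.

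The main obstacle is purely bookkeeping: matching the many Koszul signs in \eqref{eqd1} and \eqref{eqsuper} against the signs \eqref{diffsigns}, \eqref{compv} for the cochain differential and vertical composition in $\Coh_\dg(D,E)$. This is the reason the construction of $C\sotimes D$ in Section \ref{section111} was engineered with precisely those signs---the relations $(R_1)$-$(R_4)$ and the differential are nothing more than a transcription, via the Yoneda-type adjunction, of the defining identities for coherent natural transformations. Once the sign dictionary of Appendix \ref{sectionapp} is in place, no deeper content is required, so I expect the proof to be a careful but direct comparison rather than a genuinely difficult argument.
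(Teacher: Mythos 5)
Your proposal is correct and follows essentially the same route as the paper: both define $\Phi$ on the generators $\id_c\otimes g$, $f\otimes\id_d$, $\varepsilon(f;g_1,\dots,g_n)$ as the components of a dg functor $F_c\colon D\to E$ and of a coherent natural transformation $F_{c_0}\Rightarrow F_{c_1}$, and then observe that $(R_1)$--$(R_4)$ and the differential formulas \eqref{eqd0}--\eqref{eqd1} transcribe exactly functoriality, normalization, the vertical composition \eqref{compv}, and the total differential on $\Coh_\dg(D,E)$. The paper likewise leaves the sign bookkeeping to [Sh3, Th.~2.2], so no further comment is needed.
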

In fact, the definition of $C\sotimes D$ has been designed especially to fulfil this adjunction. 

The map $\Phi$ sends $\id_c\otimes D$ to a dg functor $F_c\colon D\to E$.
Then $\Phi(\id_c\otimes g)=F_c(g)$, $\Phi(f\otimes \id_d)\in F_{c_0}(d)\to F_{c_1}(d)$, for $f\colon c_0\to c_1$. 
Thus, we assign to $f\colon c_0\to c_1$ a coherent natural transformation $\Phi(f)=\Phi(\varepsilon(f,---))\colon F_{c_0}\Rightarrow F_{c_1}$, and $\Phi(f\otimes \id_d)$ is its 0-th component. Then $\Phi(\varepsilon(f;g))=\Phi(f)(g)$ is its first component.

The image $\Phi(d(\varepsilon(f;---))=d_E(\Phi(\varepsilon(f,---))$,
the image $\Phi(\varepsilon(df;---))=\delta_\tot\Phi(\varepsilon(f,---))$ is the total differential in $\Coh(D,E)(F_{c_0},F_{c_1})$,
and the summands $\Phi(\varepsilon(f;-d(-)-))$ are mapped to $\Phi(f)(-,d-,-)$.

Finally, \eqref{eqsuper} implies that the assignment $f\mapsto \Phi(\varepsilon(f,---))$ sends the composition in $C$ to the vertical composition in $\Coh_\dg(D,E)$.

See [Sh3, Th. 2.2] for detail.

\qed

\begin{coroll}\label{corproj}
There is a dg functor $p_{C,D}\colon C\sotimes D\to C\otimes D$, equal to the identity on objects, and sending all $\varepsilon(f;\ g_1,\dots,g_s)$ with $s\ge 1$ to 0.
\end{coroll}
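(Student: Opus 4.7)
The plan is to construct $p_{C,D}$ by way of the adjunction \eqref{adj1} of Proposition \ref{prop1}. A dg functor $C\sotimes D \to C\otimes D$ corresponds under $\Phi$ to a dg functor $\tilde{p}\colon C \to \Coh_\dg(D, C\otimes D)$, so it suffices to exhibit the latter and then verify that $\Phi^{-1}(\tilde{p})$ has the claimed action on the generators $\id_c\otimes g$, $f\otimes \id_d$, and $\varepsilon(f;g_1,\dots,g_n)$.

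I would define $\tilde{p}$ by sending each object $c\in C$ to the dg functor $F_c\colon D\to C\otimes D$ with $F_c(d)=c\otimes d$ and $F_c(g)=\id_c\otimes g$. For a morphism $f\colon c_0\to c_1$ in $C$ I would take $\tilde{p}(f)\in \Coh(F_{c_0},F_{c_1})$ to be the strict coherent natural transformation whose $0$-th cosimplicial component is $\{f\otimes\id_d\}_{d\in D}$ and whose higher components vanish identically.

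Two checks are then required. First, one verifies that $\delta_\tot\tilde{p}(f)=\tilde{p}(df)$: the only potentially nonzero part of $\delta_\tot\tilde{p}(f)$ lives in cosimplicial degree $1$ and at $g\colon d_0\to d_1$ reads, up to the signs in \eqref{diffsigns}, $(\id_{c_1}\otimes g)\cdot(f\otimes\id_{d_0})-(f\otimes\id_{d_1})\cdot(\id_{c_0}\otimes g)$, which vanishes in $C\otimes D$ by the Koszul sign rule governing composition in the ordinary tensor product. The higher cosimplicial degrees contribute nothing, since a cochain supported only in degree $0$ is sent by $\delta$ to a telescoping sum that collapses. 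Second, one checks that $\tilde{p}$ is multiplicative: by formula \eqref{compv}, the vertical composition $\tilde{p}(f_2)\circ_v\tilde{p}(f_1)$ is again concentrated in cosimplicial degree $0$, with value $(f_2\otimes\id_d)(f_1\otimes\id_d)=(f_2f_1)\otimes\id_d$, which equals $\tilde{p}(f_2f_1)$.

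Once $\tilde{p}$ is established as a dg functor, the explicit recipe for $\Phi^{-1}$ sketched after Proposition \ref{prop1} yields $p_{C,D}$ with the required behaviour on generators: it sends $\id_c\otimes g\mapsto F_c(g)=\id_c\otimes g$, sends $f\otimes\id_d$ to the $0$-th component of $\tilde{p}(f)$ at $d$, namely $f\otimes\id_d$, and sends $\varepsilon(f;g_1,\dots,g_n)$ to the $n$-th cosimplicial component of $\tilde{p}(f)$ at $(g_1,\dots,g_n)$, which is zero for every $n\ge 1$. There is no serious obstacle here; the only substantive point is the sign verification in the first check above, which simply expresses the standard middle-interchange identity in the tensor product $C\otimes D$.
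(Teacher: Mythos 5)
Your proof is correct and follows essentially the same route as the paper, which deduces the corollary from Proposition \ref{prop1} together with the classical adjunction $\Fun_\dg(C\otimes D,E)=\Fun_\dg(C,\FFun_\dg(D,E))$ and the embedding of naive natural transformations into $\Coh_\dg(D,C\otimes D)$; your $\tilde{p}$ is exactly the image of $\id_{C\otimes D}$ under that composite, with the closedness and multiplicativity of the resulting degree-$0$ cochains written out explicitly. No gaps.
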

\begin{proof}
It can be either seen directly, or can be deduced from Proposition \ref{prop1} and the natural dg embedding $\FFun_\dg(D,E)\to \Coh_\dg(D,E)$, along with the classic adjunction
\begin{equation}\label{adj1bis}
\Fun_\dg(C\otimes D,E)=\Fun_\dg(C,\FFun_\dg(D,E))
\end{equation}
Here $\FFun_\dg(C,D)$ is the dg category whose objects are dg functors $C\to D$, and $\FFun_\dg(C,D)(F,G)$ is the complex of naive natural transformations.
\end{proof}

\subsubsection{\sc The homotopy type of $C\sotimes D$}
For general $C,D$, we do not know the homotopy type of the dg category $C\sotimes D$. However, one has:
\begin{theorem}\label{theorht}
Let $C,D$ be small dg categories over $\k$. Assume both $C,D$ are cofibrant for the Tabuada closed model structure.
Then $C\sotimes D$ is also cofibrant and is isomorphic to $C\otimes D$ as an object of $\Hot(\Cat_\dg(\k))$. Moreover, the map of Corollary \ref{corproj} is a quasi-equivalence.
\end{theorem}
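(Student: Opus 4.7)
Since $p_{C,D}$ is the identity on the set $\mathrm{Ob}(C)\times\mathrm{Ob}(D)$ of objects by Corollary \ref{corproj}, condition (Q2) in the definition of quasi-equivalence is automatic, and the whole statement reduces to two tasks: (a) proving that $C\sotimes D$ is cofibrant; and (b) proving that for every quadruple of objects the map of complexes
$$
p_{C,D}\colon (C\sotimes D)\bigl((c_0,d_0),(c_1,d_1)\bigr)\longrightarrow C(c_0,c_1)\otimes D(d_0,d_1)
$$
is a quasi-isomorphism. The desired isomorphism $C\sotimes D\simeq C\otimes D$ in $\Hot(\Cat_\dg(\k))$ is then immediate from~(b).

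The strategy is a double cellular induction, using the generating cofibrations of the Tabuada model structure. By adjunction~\eqref{adj1}, $(-)\sotimes D$ is left adjoint to $\Coh_\dg(D,-)$ and hence preserves all colimits; the ordinary product $(-)\otimes D$ shares this property, and the natural projection $p_{-,D}$ is compatible with colimits in the first argument. A cofibrant dg category is a retract of one built by a transfinite sequence of pushouts along the generators $\emptyset\hookrightarrow\underline{\k}$ (adjoining an object) and free attachments of a morphism of degree $n$ with prescribed boundary a cocycle of degree $n-1$. If one knows that cofibrancy of $C\sotimes D$ and quasi-isomorphism~(b) are each stable under attachment of a single cell to~$C$, both properties propagate along the cellular filtration and survive the retract. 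A parallel induction in the second variable then completes the argument. Because $\sotimes$ is asymmetric in its two arguments, the inductive step in $D$ cannot be formally transposed from the step in $C$; it has to be carried out by direct inspection of the presentation of $C\sotimes D$ recalled in Section~\ref{section111}.

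The heart of the proof is this single-cell inductive step. When a generator $f\colon c_0\to c_1$ with $df=\phi\in C_\alpha(c_0,c_1)$ is attached to $C_\alpha$ to form $C_{\alpha+1}$, the new contribution to the hom complexes of $C_{\alpha+1}\sotimes D$ consists exactly of the symbols $\varepsilon(f;g_1,\dots,g_n)$ and of the words built from them by vertical composition subject to relation~\eqref{eqsuper}; their differential is given by~\eqref{eqd1}. I would introduce on the relative hom complex an \emph{$\varepsilon$-depth} filtration counting the number of $\varepsilon$-blocks that involve the new generator~$f$, and analyse the associated graded. The shape of~\eqref{eqd1} strongly suggests that the associated graded is a bar-type complex on $D(d_0,d_n)$ tensored with data from $C_\alpha$, whose only surviving contribution under~$p$ is the single tensor factor $f\otimes D(d_0,d_n)$ appearing in $C_{\alpha+1}\otimes D$; the remaining $\varepsilon$-part should then be acyclic by the standard acyclicity of the reduced bar resolution over the dg category $D$. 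The main obstacle, and the point that will demand the most work, is precisely this identification: one has to verify that the filtration is exhaustive and bounded, that the differential strictly lowers the $\varepsilon$-depth modulo bar-type terms, and that the residual differential reproduces, with correct signs, the Hochschild-type identities~\eqref{eqsuper}--\eqref{eqd1}. Wrangling the sign bookkeeping uniformly across the induction, and simultaneously tracking the preservation of cofibrations needed for assertion~(a) and for the homotopy invariance of the pushouts used in the induction, is the technical burden I expect to dominate the argument.
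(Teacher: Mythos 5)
First, a point of reference: the paper does not prove Theorem \ref{theorht} here at all --- it is quoted from [Sh3, Th.~2.4] (with the cofibrancy assertion being [Sh3, Lemma~4.5]), so there is no internal argument to measure your proposal against. Judged on its own terms, your outline has a reasonable skeleton --- reduce to semi-free (cellular) $C$ and $D$ via retracts, attach one generator at a time, and kill the kernel of $p_{C,D}$ on hom-complexes by a filtration whose associated graded is a bar-type complex --- and this is broadly consistent with the kind of argument carried out in [Sh3]. But as written it has two genuine gaps. The first is the one you flag yourself: the entire content of the theorem is the acyclicity (relative to the image of $p$) of the associated graded of your $\varepsilon$-depth filtration, and you only assert that it ``should'' be the reduced bar resolution of $D$. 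Until that identification is actually carried out --- including checking that relation \eqref{eqsuper}, which rewrites $\varepsilon(f_2f_1;\dots)$ as a sum of products of lower $\varepsilon$'s, is compatible with the filtration, and that no component of the differential \eqref{eqd1} preserves rather than lowers the depth --- nothing has been proved; this is precisely where the real work lives, and the remark after Theorem \ref{theorem1} that the quasi-isomorphism ``seems to be quite non-trivial'' with no known direct proof should be taken as a warning that this step is not routine.

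The second gap is structural. Your induction in the first variable is legitimate because $(-)\sotimes D$ is a left adjoint by \eqref{adj1} and therefore preserves the pushouts along generating cofibrations; but $C\sotimes(-)$ is not exhibited as a left adjoint anywhere (the adjunction \eqref{adj1} is one-sided, and $\sotimes$ is genuinely asymmetric), so the claim that ``a parallel induction in the second variable completes the argument'' has no formal backing. You must either prove directly that $C\sotimes(-)$ carries the relevant cellular pushouts of $D$ to (homotopy) pushouts, or restructure the proof so that only the first variable is inducted upon while $D$ is handled by an explicit resolution or a direct computation. Relatedly, propagating a levelwise quasi-isomorphism along a transfinite composition of pushouts requires knowing that these are homotopy pushouts (e.g.\ that the intermediate stages are cofibrant and the attaching maps are cofibrations); you mention this but do not supply it, and the cofibrancy assertion (a) --- which you also need as an ingredient of the induction, not only as a conclusion --- recurs at every stage. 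None of these points is fatal to the strategy, but each is a place where the proposal currently stops short of a proof.
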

A proof was given in [Sh3, Th. 2.4].

\qed

\subsection{\sc The 2-operad $\mathcal{O}$}\label{sectionfg}
Here we define dg vector spaces $\mathcal{O}(n_1,\dots,n_k)$, $k\ge 1$, $n_1,\dots,n_k\ge 1$. Later in Section \ref{section2opn} we prove that these dg vector spaces are  the components of a 2-operad $\mathcal{O}$ (the reader is referred to Appendix \ref{sectionba} for definition of Batanin 2-operads).

Denote by $I_n$ the $\k$-linear span of the simplex category (defined over $\Sets$), which has objects $0,1,\dots, n$, and a unique morphism in $I_n(i,j)$ for any $i\le j$.

Denote 
\begin{equation}
I_{n_1,\dots,n_k}=I_{n_k}\sotimes(I_{n_{k-1}}\sotimes(\dots \sotimes(I_{n_2}\sotimes I_{n_1})\dots))
\end{equation}
Let 
$$
\min=(0,0,\dots,0)\text{  and  }\max=(n_k,n_{k-1},\dots,n_1)
$$
be the two ``extreme'' objects of $I_{n_1,\dots,n_k}$.

Define
\begin{equation}
\mathcal{O}(n_1,\dots,n_k)=I_{n_1,\dots,n_k}(\min,\max)
\end{equation}

It is a $\mathbb{Z}_{\le 0}$-graded complex of vector spaces over $\k$. 

\begin{prop}\label{propcontr}
Let $k,n_1,\dots,n_k\ge 0$. There is a map of complexes of vector spaces
$$
p_{n_1,\dots,n_k}\colon \mathcal{O}(n_1,\dots,n_k)\to\k[0]
$$
which is a quasi-isomorphism.
\end{prop}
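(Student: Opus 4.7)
The plan is to apply Theorem \ref{theoremx1} iteratively, together with the stability of cofibrancy under $\sotimes$, reducing everything to the trivial calculation $(I_{n_k}\otimes\cdots\otimes I_{n_1})(\min,\max)=\k[0]$.

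First I would verify by induction on $k$ that $I_{n_1,\dots,n_k}$ is cofibrant. The base case $k=1$ is immediate since each $I_n$ is cofibrant (it is the $\k$-linearization of the totally ordered poset $\{0<1<\cdots<n\}$, a standard cofibrant generator for the Tabuada model structure). For the inductive step, write $I_{n_1,\dots,n_k}=I_{n_k}\sotimes I_{n_1,\dots,n_{k-1}}$ and invoke [Sh3, Lemma 4.5], which asserts that $\sotimes$ preserves cofibrancy in both variables.

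Next I would construct, again by induction on $k$, a dg functor
\begin{equation*}
q_{n_1,\dots,n_k}\colon I_{n_1,\dots,n_k}\longrightarrow I_{n_k}\otimes I_{n_{k-1}}\otimes\cdots\otimes I_{n_1}
\end{equation*}
as the composition
\begin{equation*}
I_{n_k}\sotimes I_{n_1,\dots,n_{k-1}}\;\xrightarrow{\;p\;}\;I_{n_k}\otimes I_{n_1,\dots,n_{k-1}}\;\xrightarrow{\id\otimes q_{n_1,\dots,n_{k-1}}}\;I_{n_k}\otimes\cdots\otimes I_{n_1},
\end{equation*}
where $p$ is the projection supplied by Corollary \ref{corproj}. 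By Theorem \ref{theoremx1}, applied to the cofibrant pair $(I_{n_k},I_{n_1,\dots,n_{k-1}})$, the first arrow is a quasi-equivalence. The second arrow is a quasi-equivalence because $-\otimes-$ preserves quasi-equivalences between cofibrant dg categories (a standard property of the Tabuada model structure, where $\otimes$ is a left Quillen bifunctor). Hence $q_{n_1,\dots,n_k}$ is a quasi-equivalence, and in particular induces a quasi-isomorphism on every Hom-complex.

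Evaluating on the Hom-complex $(\min,\max)$ then yields
\begin{equation*}
\mathcal{O}(n_1,\dots,n_k)=I_{n_1,\dots,n_k}(\min,\max)\;\xrightarrow{\sim}\;(I_{n_k}\otimes\cdots\otimes I_{n_1})(\min,\max)=\bigotimes_{j=1}^{k}I_{n_j}(0,n_j)=\k[0],
\end{equation*}
which defines the desired $p_{n_1,\dots,n_k}$. The only thing worth double-checking is that the resulting map agrees with the ``naive'' projection killing all $\varepsilon$-generators in $I_{n_1,\dots,n_k}(\min,\max)$; this is immediate from the description of $p$ in Corollary \ref{corproj} and the inductive construction. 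There is no real obstacle here: the entire difficulty of the statement (which, as the author emphasizes, admits no known direct proof) is already encapsulated in Theorem \ref{theoremx1}.
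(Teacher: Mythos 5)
Your argument is correct and is essentially the paper's own proof: both iterate Theorem \ref{theoremx1} (using [Sh3, Lemma 4.5] for cofibrancy of the iterated twisted product) to get a quasi-equivalence onto $I_{n_k}\otimes\cdots\otimes I_{n_1}$, and then restrict to $\Hom(\min,\max)$. One small correction to your justification of the second arrow: $\otimes$ is famously \emph{not} a left Quillen bifunctor for the Tabuada model structure; the fact you actually need --- that $\id\otimes q_{n_1,\dots,n_{k-1}}$ is a quasi-equivalence --- holds because $\k$ is a field, so tensoring the $\Hom$-complexes with a fixed complex preserves quasi-isomorphisms and essential surjectivity on $H^0$ is inherited, with no cofibrancy or Quillen-bifunctor input required.
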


\begin{proof}
The categories $I_n$ are cofibrant, and $C\sotimes D$ is cofibrant if $C,D$ are, by [Sh3, Lemma 4.5]. 
The statement is proven by induction, using this fact and Theorem \ref{theorht}, as follows.
At the first step, we see that the natural projection
$$
I_{n_k}\sotimes(I_{n_{k-1}}\sotimes(\dots \sotimes(I_{n_2}\sotimes I_{n_1})\dots))\to I_{n_k}\otimes\Big(I_{n_{k-1}}\sotimes(\dots \sotimes(I_{n_2}\sotimes I_{n_1})\dots)\Big)
$$
is a quasi-equivalence. Then we apply the same argument to $I_{n_{k-1}}\sotimes(\dots \sotimes(I_{n_2}\sotimes I_{n_1})\dots)$
and see that the natural projection 
$$
I_{n_{k-1}}\sotimes(\dots \sotimes(I_{n_2}\sotimes I_{n_1})\dots)\to I_{n_{k-1}}\otimes\Big(\dots \sotimes(I_{n_2}\sotimes I_{n_1})\dots\Big)
$$
We get the composition
\begin{equation}
\begin{aligned}
\ &I_{n_k}\sotimes(I_{n_{k-1}}\sotimes(\dots \sotimes(I_{n_2}\sotimes I_{n_1})\dots))\xrightarrow{p} I_{n_k}\otimes\Big(I_{n_{k-1}}\sotimes(\dots \sotimes(I_{n_2}\sotimes I_{n_1})\dots)\Big)\xrightarrow{\id\otimes p} \\
&I_{n_k}\otimes I_{n_{k-1}}\otimes\Big(I_{n_{k-1}}\sotimes\dots \sotimes(I_{n_2}\sotimes I_{n_1})\dots)\Big)
\end{aligned}
\end{equation}
(to show that the second map is a quasi-equivalence make use that $A\otimes B\xrightarrow{\id_A\otimes f}A\otimes B^\prime$ is a quasi-equivalence as soon as $B\xrightarrow{f}B^\prime$ is), and so on.
Finally, we see that
\begin{equation}
P_{n_1,\dots,n_k}\colon I_{n_k}\sotimes(I_{n_{k-1}}\sotimes(\dots \sotimes(I_{n_2}\sotimes I_{n_1})\dots))\to I_{n_k}\otimes I_{n_{k-1}}\otimes\dots\otimes  I_{n_2}\otimes I_{n_1}
\end{equation}
is a quasi-equivalence.\\
Its restriction to $\Hom(\min,\max)$ gives the quasi-isomorphism $p_{n_1,\dots,n_k}$.\footnote{In fact, the fact that $p\colon \mathcal{O}(D)\to\k$ is a quasi-isomorphism for each $D$ is simpler than the general result of Theorem \ref{theorht}, and can be proven by elementary methods, as it is shown in [Sh4].}

\end{proof}

\section{\sc One-side associativity and skew monoidal categories}\label{sectionassocmain}
Here we construct, for small dg categories $C,D,E$, an associativity dg functor
$$
\alpha_{C,D,E}\colon (C\sotimes D)\sotimes E\to C\sotimes (D\sotimes E)
$$
natural in each argument. The dg functor $\alpha_{C,D,E}$ {\it is not an isomorphism} in general. It does not give rise to a monoidal structure, therefore. The structure we get is described as a {\it skew monoidal category}, see [LS], [BL], [S]. We essentially use a coherence theorem proven in loc.cit., which substitutes the Mac Lane coherence theorem for the case of skew monoidal categories. In our example, the skew monoidal structure on $(\Cat_\dg(\k),\sotimes)$ is {\it perfect}, see Definition \ref{defskewperf}. For perfect skew monoidal categories, the coherence theorem is as simple as its classical pattern, see Proposition \ref{propskew}.

\subsection{\sc The associativity map}\label{sectionassoc}
\begin{theorem}\label{theorassoc}
For any three dg categories $C,D,E$, there is a unique dg functor
$$
\alpha_{C,D,E}\colon (C\sotimes D)\sotimes E\to C\sotimes (D\sotimes E)
$$
natural in each argument, 
which is the identity map on objects, and which is defined on morphisms as follows:
\begin{itemize}
\item[(i)]
for $f\in C$, $g\in D$, $h\in E$, $X,Y,Z$ objects of $C,D,E$ correspondingly, one has:
\begin{equation}\label{eqassoc1}
\begin{aligned}
\ &\alpha_{C,D,E}((f\star \id_Y)\star \id_Z)=f\star (\id_Y\star\id_Z)\\
&\alpha_{C,D,E}((\id_X\star g)\star\id_Z)=\id_X\star (g\star \id_Z)\\
&\alpha_{C,D,E}((\id_X\star\id_Y)\star h)=\id_X\star(\id_Y\star h)
\end{aligned}
\end{equation}
\item[(ii)]
for $f\in C$, $g_1,\dots,g_k\in D$, $k\ge 1$, and $Z$ an object in $E$, one has:
\begin{equation}\label{eqassoc1bis}
\alpha_{C,D,E}(\varepsilon(f;g_1,\dots,g_k)\star \id_Z)=\varepsilon(f;g_1\star\id_{Z},\dots, g_k\star\id_{Z})
\end{equation}

\item[(iii)]
for $f\in C$, $g\in D$, $h_1,\dots,h_n\in E$, $X$ an object of $C$, $Y$ an object of $D$, one has:
\begin{equation}\label{eqassoc1bisbis}
\begin{aligned}
\ &\alpha_{C,D,E}(\varepsilon(f\star \id_Y;h_1,\dots,h_n))=\varepsilon(f; \id_Y\star h_1,\dots,\id_Y\star h_n)\\
&\alpha_{C,D,E}(\varepsilon(\id_X\star g; h_1,\dots, h_n))=\id_X\star \varepsilon(g;h_1,\dots,h_n)
\end{aligned}
\end{equation}
\item[(iv)]
for $f\in C$, $g_1,\dots,g_k\in D$, $h_1,\dots,h_N\in E$, one has:
\begin{equation}\label{eqassoc4bis}
\begin{aligned}
\ &\alpha_{C,D,E}\big(\varepsilon(\varepsilon(f; g_1,\dots,g_k);h_1,\dots,h_N)\big)=\sum_{1\le i_1\le j_1\le i_2\le j_2\le \dots\le j_k\le N}(-1)^{\sum_{\ell=1}^k|g|_\ell\cdot\sum_{s\le i_\ell}(|h_s|-1)}\\
&\varepsilon\big(f;\   h_1,\dots, h_{i_1}, \varepsilon(g_1;h_{i_1+1},\dots,h_{j_1}), h_{j_1+1},\dots, h_{i_2},\varepsilon(g_2;h_{i_2+1},\dots,h_{j_2}), h_{j_2+1},\dots \big)
\end{aligned}
\end{equation}
where the sum is taken over all ordered sets $\{1\le i_1\le j_1\le i_2\le j_2\le\dots\le j_{k}\le N\}$; for the case $j_\ell=i_\ell$, the corresponding term $\varepsilon(g_\ell;\ h_{i_{\ell+1}},\dots,h_{j_\ell})$ is replaced by $g_\ell\otimes \id_{-}$.
\end{itemize}

\end{theorem}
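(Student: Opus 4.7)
Uniqueness is immediate: by the construction in Section~\ref{section111} the graded $\k$-linear category $(C\sotimes D)\sotimes E$ is generated, modulo the relations $(R_1)$--$(R_4)$ applied at both nesting levels of the iterated twisted product, precisely by the morphisms listed in (i)--(iv). So any dg functor with the stated values on them is determined. For existence, the plan is to define $\alpha_{C,D,E}$ on these generators by formulas (i)--(iv), extend freely, and then verify (a) compatibility with the defining relations of $(C\sotimes D)\sotimes E$, and (b) commutation with the differential. Functoriality in $C$, $D$, $E$ is manifest, since the formulas are built only from composition and the $\varepsilon$-structure of the constituent dg categories.

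The relations $(R_1)$--$(R_3)$ are verified separately at the outer and inner nesting levels, and amount to: linearity of the right-hand side of (iv) in each slot; the vanishing of $\varepsilon$ when any $h_i$ (or any $g_\ell$ after expansion) is an identity, realised either because the corresponding summand in (iv) collapses to a pure-tensor identity or because two summands cancel; and the bookkeeping for pure-tensor generators in (i). The $(R_4)$-relation in the configurations (ii) and (iii) is routine; the decisive case is (iv), where one must check
$$\alpha\bigl(\varepsilon(\varepsilon(f_2 f_1;\vec g)\,\vec{\cdot}\,;\,\vec h)\bigr)=\sum_m\pm\,\alpha\bigl(\varepsilon(\ldots_2;\vec h_{>m})\star\varepsilon(\ldots_1;\vec h_{\le m})\bigr),$$
after first expanding the inner $\varepsilon(f_2f_1;\vec g)$ by $(R_4)$. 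Expanding both sides via (iv) produces sums over nested ordered partitions $1\le i_1\le j_1\le\dots\le j_k\le N$, and the matching of terms (with signs) is formally identical to the brace composition identity \eqref{brx2}.

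The chain-map condition $d\alpha=\alpha d$ is checked on generators. The only nontrivial case is again (iv): on the source, the differential \eqref{eqd1} applied to $\varepsilon(\varepsilon(f;\vec g);\vec h)$ expands to the sum of (I) an outer differential acting on the $h_j$'s and boundary compositions with $\id_X\star\id_Y\star h$ or $h\star(\id_X\star\id_Y)$, and (II) the differential of the inner $\varepsilon(f;\vec g)$, which itself expands by \eqref{eqd1}. On the target, differentiating each summand of \eqref{eqassoc4bis} via \eqref{eqd1} (with $f$ in place of the inner $\varepsilon$) produces an analogous sum. Matching leaf-by-leaf in the ordered-partition sum is, up to the sign conventions of Appendix~\ref{sectionapp}, the brace-algebra identity \eqref{brx1}, in complete analogy with the parallel between the $\sotimes$-relations and brace operations emphasised in Section~\ref{introbraces}.

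The main obstacle is the sign bookkeeping in the two brace-type identities underlying (iv). Both sides are large alternating sums indexed by nested ordered tuples, and the verification requires the full sign conventions of Appendix~\ref{sectionapp} (in particular the shift by $[1]$ discussed in Remark~\ref{rembraceshift}). Conceptually the identity is forced by the ``brace'' nature of $\sotimes$; the work is in the signs, and once they are tracked, existence and uniqueness of $\alpha_{C,D,E}$ follow by extending freely from the generators.
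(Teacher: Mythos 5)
Your proposal matches the paper's own proof: uniqueness because the values are prescribed on generators of $(C\sotimes D)\sotimes E$, and existence by verifying compatibility with the relations $(R_1)$--$(R_4)$ and with the differential on generators, the decisive cases being those involving \eqref{eqassoc4bis}, which the paper likewise identifies with the brace composition identity and then dismisses as ``a cumbersome but straightforward computation.'' The only difference is that you make the brace-identity parallel slightly more explicit than the paper's accompanying remark, but the structure of the argument is the same.
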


We provide a proof of this Theorem in Appendix \ref{approofs}. 

\begin{remark}\label{remfu}{\rm 
In the r.h.s. of formula \eqref{eqassoc4bis}, those $A_i$ in $\varepsilon(f; A_1,\dots,A_M)$  which are equal to $h_j$ are understood as $\id_Y\star h_j$, where $\id\star_Y$ is the identity morphism of the corresponding object in $Y$.
}
\end{remark}

\comment
\begin{remark}{\rm
Formula \eqref{eqassoc4bis} may remind the reader the well-known formula for the composition of two brace operations on the Hochschild complex $C^\udot(A,A)$ of an associative algebra $A$, see \eqref{eqapp5x}.
We make use of this similarity when discuss the twisted tensor product, see Section \ref{sectionm2} below.
}
\end{remark}
\endcomment

\subsection{\sc Skew monoidal categories}\label{sectionskmc}
As we have mentioned, the associativity dg functors $\alpha_{C,D,E}$ are not, in general, isomorphisms. Therefore, the most plausible structure one can get out of them is not the structure of a monoidal category (where the associativity maps are isomorphisms). What we'll arrive to is a structure of a {\it skew monoidal category}, see [LS], [BS], [S]. We provide here a short overview of this theory. There is a {\it coherence} theorem for skew monoidal categories, which is, however, a more complicated statement. In our example from Section \ref{sectionassoc}, the closed monoidal category is {\it perfect}, see Definition \ref{defskewperf} below. For the perfect closed monoidal categories, the coherence theorem becomes as simple as its classical counterpart, the MacLane coherence for monoidal categories (see Proposition \ref{propskew}). It is essentially used in the proof of the associativity of the operadic composition for the 2-operad $\mathscr{O}$, see Section \ref{sectionopcomp}. 
\comment
In this Subsection, we recall the definition of a skew monoidal category [LS], and show that the category of small dg categories over $\k$, with the twisted tensor product $\sotimes$, the associativity dg functors $\alpha_{C,D,E}$, and the unit object $\underline{\k}$ (the dg category with a single object whose space of endomorphisms is $\k$), gives rise to a structure of a skew monoidal category. 
\endcomment
\begin{defn}\label{defskew}{\rm
Let $\mathscr{C}$ be a category with an object $I$ called the {\it skew unit}, and with a functor $\otimes\colon \mathscr{C}\times\mathscr{C}\to\mathscr{C}$ (called {\it skew tensor product}), and natural families of {\it lax constraints} having the directions
\begin{equation}\label{eqalphan}
\alpha_{XYZ}\colon (X\otimes Y)\otimes Z\to X\otimes (Y\otimes Z)
\end{equation}
\begin{equation}\label{eqlambdan}
\lambda_X\colon I\otimes X\to X
\end{equation}
\begin{equation}\label{eqrhon}
\rho_X\colon X\to X\otimes I
\end{equation}
subject to the following conditions:
\begin{equation}\label{eqskewmonn1}
\xymatrix{
&(W\otimes X)\otimes (Y\otimes Z)\ar[rd]^{\alpha_{W,X,Y\otimes Z}}\\
((W\otimes X)\otimes Y)\otimes Z\ar[ru]^{\alpha_{W\otimes X,Y,Z}}\ar[d]_{\alpha_{W,X,Y\otimes \id_Z}}&&W\otimes(X\otimes(Y\otimes Z))\\
(W\otimes(X\otimes Y))\otimes Z\ar[rr]_{\alpha_{W,X\otimes Y,Z}}&&W\otimes((X\otimes Y)\otimes Z)\ar[u]_{\id_W\otimes\alpha_{X,Y,Z}}
}
\end{equation}
\begin{equation}\label{eqskewmonn2}
\xymatrix{
(I\otimes X)\otimes Y\ar[dr]_{\lambda_{X\otimes\id_Y}}\ar[rr]^{\alpha_{I,X,Y}}&&I\otimes(X\otimes Y)\ar[dl]^{\lambda_{X\otimes Y}}\\
&X\otimes Y
}
\end{equation}
\begin{equation}\label{eqskewmonn3}
\xymatrix{
(X\otimes I)\otimes Y\ar[r]^{\alpha_{X,I,Y}}&X\otimes (I\otimes Y)\ar[d]^{\id_X\otimes\lambda_Y}\\
X\otimes Y\ar[u]^{\rho_X\otimes \id_Y}\ar[r]_{\id_{X\otimes Y}}&X\otimes Y
}
\end{equation}
\begin{equation}\label{eqskewmonn4}
\xymatrix{
(X\otimes Y)\otimes I\ar[rr]^{\alpha_{X,Y,I}}&&X\otimes(Y\otimes I)\\
&X\otimes Y\ar[ul]^{\rho_{X\otimes Y}}\ar[ur]_{\id_X\otimes\rho_Y}
}
\end{equation}
\begin{equation}\label{eqskewmonn5}
\xymatrix{
I\ar[rr]^{\id_I}\ar[dr]_{\rho_I}&&I\\
&I\otimes I\ar[ru]_{\lambda_I}
}
\end{equation}
}
\end{defn}
Note that $\alpha_{X,Y,Z},\lambda_X,\rho_X$ are not assumed to be isomorphisms. 

It follows from the definition that the maps
\begin{equation}\label{skewidem}
\begin{aligned}
\ &\varepsilon_{X,Y}^\ell\colon (X\otimes I)\otimes Y\xrightarrow{\alpha}X\otimes(I\otimes Y)\xrightarrow{\id\otimes \lambda}X\otimes Y\xrightarrow{\rho\otimes \id}(X\otimes I)\otimes Y\\
&\varepsilon_{X,Y}^r\colon X\otimes (I\otimes Y)\xrightarrow{\id\otimes\lambda}X\otimes Y\xrightarrow{\rho\otimes\id}(X\otimes I)\otimes Y\xrightarrow{\alpha}X\otimes(I\otimes Y)\\
&\varepsilon_0\colon I\otimes I\xrightarrow{\lambda}I\xrightarrow{\rho}I\otimes I
\end{aligned}
\end{equation}
are {\it idempotents}, but are {\it not equal to identity}, in general. 

It makes the coherence theorem for skew monoidal categories a non-trivial issue. It is proven in [LS] and is refined in [BL].

We suggest the following definition:
\begin{defn}\label{defskewperf}{\rm
A skew monoidal category is called {\it perfect} if $\lambda_X$, $\rho_X$ are isomorphisms, for any object $X$. 
}
\end{defn}
One sees immediately that in a perfect skew monoidal categories the morphisms \eqref{skewidem} are equal to identity morphisms. Indeed, they are idempotents and isomorphisms. 
More generally, the following Proposition shows that for a {perfect} skew monoidal category the classical MacLane coherence holds, what makes the situation much easier than for general skew monoidal categories, see [LS], [BL].
\begin{prop}\label{propskew}
Let $\mathscr{C}$ be a perfect skew monoidal category. Then the classical MacLane coherence theorem holds in $\mathscr{C}$. More precisely, any two morphisms between the same pair of objects, formed by successive application of $\alpha,\lambda,\rho$, are equal.
\end{prop}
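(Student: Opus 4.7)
The plan is to deduce the statement directly from the general coherence theorem of Lack--Street [LS] (refined in [BL]), by showing that perfectness trivializes all the obstructions appearing there. The key conceptual point is already highlighted in the remark the authors make immediately after Definition \ref{defskewperf}: in a perfect skew monoidal category, the three idempotents of \eqref{skewidem} are identity morphisms.

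First, I would make that point rigorous. Each of $\varepsilon^\ell_{X,Y}$, $\varepsilon^r_{X,Y}$, $\varepsilon_0$ defined in \eqref{skewidem} is a composition of instances of $\alpha_{-,I,-}$, $\id\sotimes\lambda$, $\rho\sotimes\id$ (respectively $\lambda$ and $\rho$), all of which are isomorphisms by Definition \ref{defskewperf}. Hence each composite is an isomorphism. Since each composite is also an idempotent, and an idempotent isomorphism in any category is necessarily the identity, one obtains
\[
\varepsilon^\ell_{X,Y}=\id,\qquad \varepsilon^r_{X,Y}=\id,\qquad \varepsilon_0=\id.
\]

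Next, I would invoke the general coherence theorem. The main theorem of [LS] describes the free skew monoidal category on a set of objects combinatorially, and identifies the obstructions to two parallel ``canonical'' morphisms (i.e.\ morphisms built from $\alpha,\lambda,\rho$) being equal with the failure of the idempotents $\varepsilon^\ell$, $\varepsilon^r$, $\varepsilon_0$ to be identities in appropriate subdiagrams. The refined normal-form description of [BL] confirms and sharpens this picture. By the first step, in the free \emph{perfect} skew monoidal category every such obstruction vanishes, so every hom-set between two fixed objects collapses to a singleton --- which is exactly classical Mac Lane coherence. Passing from the free case to an arbitrary perfect skew monoidal category $\mathscr{C}$ is then formal: any two parallel canonical morphisms in $\mathscr{C}$ are images under the unique strict skew monoidal functor from the free perfect skew monoidal category to $\mathscr{C}$ of parallel canonical morphisms in the free category, which are equal.

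The main obstacle I anticipate is the mapping between the combinatorial/rewriting language of [LS] and [BL] and the simple identity-of-idempotents hypothesis used here. Both sources phrase coherence in terms of normal forms rather than an explicit list of obstructions, so the real work consists of extracting from those proofs the assertion that the triviality of the three idempotents $\varepsilon^\ell_{X,Y}$, $\varepsilon^r_{X,Y}$, $\varepsilon_0$ suffices to force every parallel pair of canonical morphisms in the free skew monoidal category to coincide. Once this has been isolated, the derivation of the proposition is immediate.
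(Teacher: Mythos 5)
Your proposal follows essentially the same route as the paper: the paper likewise observes (immediately after Definition \ref{defskewperf}) that the three maps of \eqref{skewidem} are invertible idempotents and hence identities, and then simply asserts that the proposition is ``directly derived from the results proven in loc.cit.''\ ([LS], [BL]) without further argument. The step you honestly flag as the remaining work --- extracting from the normal-form descriptions of [LS]/[BL] that triviality of $\varepsilon^\ell$, $\varepsilon^r$, $\varepsilon_0$ forces all parallel canonical morphisms in the free skew monoidal category to coincide --- is precisely the step the paper also leaves implicit, so your write-up is at least as complete as the paper's own.
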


\begin{proof}
Using the morphisms $\lambda_X$ and $\rho_X^{-1}$ one gets a canonical map $p_Y$ from any monomial $Y$ in $\mathscr{C}$ in objects $X_1,\dots,X_n\ne I$, and in several copies of the unit $I$, to a monomial in $\bar{Y}$ in $X_1,\dots,X_n$ without $I$. 
Let $f\colon Y\to Y^\prime$ be an {\it elementary} morphism, that is, which is a single application of either $\alpha$, or $\rho_X,\lambda_X,\rho_X^{-1},\lambda_X^{-1}$. It induces an elementary morphism $\bar{f}\colon \bar{Y}\to \bar{Y}^\prime$, which is $\alpha$ or $\id$ when $f=\alpha$, and which is the identity morphism if $f$ is $\lambda_X,\rho_X$ or its inverse. 
It follows from \eqref{eqskewmonn2}-\eqref{eqskewmonn5} that for any elementary morphism $f$ the diagram 
\begin{equation}\label{eqskewproof}
\xymatrix{
Y\ar[r]^{f}\ar[d]_{p_Y}&Y^\prime\ar[d]^{p_{Y^\prime}}\\
\bar{Y}\ar[r]^{\bar{f}}&\bar{Y}^\prime
}
\end{equation}
is commutative (and the vertical maps are isomorphisms). \\
Now assume we are given two morphisms from $Y_1$ to $Y_N$ each of which is a composition of elementary morphisms. We get two maps from $\bar{Y}_1$ to $\bar{Y}_N$, each of which is a composition of several $\alpha$s. 
The MacLane coherence holds for a non-unital monoidal category $\mathscr{C}$ in which the associator is not necessarily isomorphisms, see [LS, Prop. 7.1] and [ML1]. It follows that the two morphisms $\bar{Y}_1\to\bar{Y}_N$ are equal. Using commutativity of  diagrams \eqref{eqskewproof}, we conclude that the two morphisms $Y_1\to Y_N$ are equal. 
\end{proof}

\subsection{\sc Skew closed categories}\label{sectionscc}
Assume $\mathscr{C}$ is a skew monoidal category, and there is an {\it inner Hom} $[-,-]\colon \mathscr{C}^\op\times\mathscr{C}\to\mathscr{C}$, such there there is an adjunction
\begin{equation}\label{adjskew}
\mathscr{C}(X\otimes Y,Z)\simeq \mathscr{C}(X,[Y,Z])
\end{equation}
Then one can translate the commutative diagrams in Definition \ref{defskew} to commutative diagrams formulated entirely in terms of the inner product $[-,-]$. The resulting concept, introduced in [S], is called a {\it skew closed category}. Moreover, in presence of adjunction \eqref{adjskew} the two structures are equivalent, see [S, Sect. 8].

\comment
\begin{defn}
{\rm
A {\it skew closed} category $\mathscr{C}$ is a category with the following data:
a functor $[-,-]\colon \mathscr{C}^\op\times\mathscr{C}\to\mathscr{C}$, a {\it unit} object $I\in\mathscr{C}$, a natural transformation
\begin{equation}
L=L_{BC}^A\colon [B,C]\to[[A,B],[A,C]]
\end{equation}
a natural transformation
\begin{equation}
i=i_A: [I,A]\to A
\end{equation}
and a natural transformation
\begin{equation}
j=j_A: I\to [A,A]
\end{equation}
which are subject to the 5 axioms the reader can find in [S], Section 2.}
\end{defn}

\begin{equation}
\xymatrix{
&[[A,C],[A,D]]\ar[rd]^{L}\\
[C,D]\ar[d]_{L}\ar[ur]^{L}&&[[[A,B],[A,C]],[[A,B],[A,D]]]\ar[d]^{[L,\id]}\\
[[B,C],[B,D]]\ar[rr]^{[\id,L]}&&[[B,C],[[A,B],[A,D]]]
}
\end{equation}
\begin{equation}
\xymatrix{
[[A,A][A,C]]\ar[rr]^{\hspace{3mm}[j,1]}&&[I,[A,C]]\ar[dl]^{i}\\
&[A,C]\ar[ul]^{L}
}
\end{equation}
\begin{equation}
\xymatrix{
[B,B]\ar[rr]^{L\hspace{5mm}}&&[[A,B],[A,B]]\\
&I\ar[ul]^{j}\ar[ur]_{j}
}
\end{equation}
\begin{equation}
\xymatrix{
[B,C]\ar[rd]_{[i,\id]}\ar[rr]^{L}&&[[I,B],[I,C]]\ar[ld]^{[\id,i]}\\
&[[I,B],C]
}
\end{equation}
\begin{equation}
\xymatrix{
I\ar[rr]^{j}\ar[dr]_{\id}&&[I,I]\ar[dl]^{i}\\
&I
}
\end{equation}

\endcomment

\comment
First of all, we have a morphism $p\colon [B\otimes C,D]\to [B,[C,D]]$ (not necessarily an isomorphism), for any three objects $B,C,D\in\mathscr{C}$. It follows from the commutativity of the diagram
\begin{equation}\label{eqsm1}
\xymatrix{
\mathscr{C}(A\otimes (B\otimes C),D)\ar[rr]^{\simeq}\ar[d]_{\alpha_{A\otimes B,C}^*}&&\mathscr{C}(A, [B\otimes C,D])\ar[d]\\
\mathscr{C}((A\otimes B)\otimes C,D)\ar[rr]^{\simeq}&&\mathscr{C}(A,[B,[C,D]])
}
\end{equation}
in which the horizontal maps are isomorphisms. Then the r.h.s. vertical arrow defines, via the Yoneda lemma, a morphism 
$p_{BCD}$.

Then define $L_{AC}^B$ as the composition
\begin{equation}\label{eqsm2}
[A,C]\xrightarrow{[e,\id]}[[B,A]\otimes B,C]\xrightarrow{p_{[B,A],B,C}}[[B,A],[B,C]]
\end{equation}
where the morphism $e\colon [B,A]\otimes B\to A$ is obtained from the identity morphism $\id_{[B,A]}$ via the adjunction \eqref{adjskew}. 

Furthermore, define morphism $j_A\colon I\to [A,A]$ as corresponded to $\lambda_A\colon I\otimes A\to A$, via the adjunction \eqref{adjskew}. 

Finally, define $i_B\colon [I,B]\to B$ from the diagram
\begin{equation}\label{eqsm3}
\xymatrix{
\mathscr{C}(A\otimes I,B)\ar[rr]^{\rho_A}\ar[dr]_{\simeq}&&\mathscr{C}(A,B)\\
&\mathscr{C}(A,[I,B])\ar[ur]_{\mathscr{C}(\id,i_B)}
}
\end{equation}

The following statement is a part of [S, Prop.18], and is essential for our paper:
\begin{prop}\label{propsm}
Assume $(\mathscr{C}, I,\alpha,r,\lambda)$ is a skew monoidal category, such that there exists an inner hom $[-,-]$ such that \eqref{adjskew} holds. Then $(\mathscr{C}, I, L,i,j)$ is a skew closed category. Vice versa, assume that $(\mathscr{C}, I, L,i,j)$ is a skew closed category, and \eqref{adjskew} holds. Then the same diagrams \eqref{eqsm1}-\eqref{eqsm3} define the associativity morphism $\alpha$ and the unit maps $r$ and $\lambda$, making $(\mathscr{C}, I,\alpha,r,\lambda)$ a skew monoidal category. 
Moreover the diagrams \eqref{eqcompskew1}-\eqref{eqcompskew3} below are commutative. 
\end{prop}
For the first two statements, see [S, Section 8] for a proof. For the third statement, it means that $\mathscr{C}$ is a $\mathscr{C}$-enriched category, see [S, Rem. 20]. The proof given in [Ke, Section 1.6] for the classical closed monoidal case, is easily adopted to the skew monoidal situation. 

\qed
\endcomment

Assume \eqref{adjskew} holds, define $M_{A,B,C}\colon [B,C]\otimes [A,B]\to [A,C]$ as follows. First of all, there is 
$$
e\colon [A,B]\otimes A\to B
$$
adjoint to $\id\colon [A,B]\to [A,B]$. One has the composition
$$
([B,C]\otimes [A,B])\otimes A\xrightarrow{\alpha}[B,C]\otimes ([A,B]\otimes A)\xrightarrow{\id\otimes e}[B,C]\otimes B\xrightarrow{e} C
$$
whose right adjoint gives $M_{A,B,C}$. 

\begin{prop}\label{propsm}
The composition $M$ defined above makes a skew closed monoidal category enriched over itself. More precisely, the following commutative diagrams hold:
\begin{equation}\label{eqcompskew1}
\xymatrix{
([C,D]\otimes [B,C])\otimes [A,B]\ar[rr]^{\alpha}\ar[d]_{M_{BCD}\otimes\id}&&[C,D]\otimes ([B,C]\otimes[A,B])\ar[d]^{\id\otimes M_{ABC}}\\
[B,D]\otimes [A,B]\ar[dr]_{M_{ABD}}&&[C,D]\otimes [A,C]\ar[dl]^{M_{ACD}}\\
&[A,D]
}
\end{equation}

\begin{equation}\label{eqcompskew2}
\xymatrix{
[A,B]\otimes I\ar[r]^{\id\otimes j\hspace{3mm}}&[A,B]\otimes [A,A]\ar[d]^{M_{AAB}}\\
[A,B]\ar[u]^{\rho}\ar[r]^{\id}&[A,B]
}
\end{equation}

\begin{equation}\label{eqcompskew3}
\xymatrix{
[B,B]\otimes [A,B]\ar[r]^{\hspace{7mm}M_{ABB}}&[A,B]\\
I\otimes [A,B]\ar[u]^{j\otimes \id}\ar[r]_{\lambda}&[A,B]\ar[u]_{\id}
}
\end{equation}
\end{prop}
\begin{proof}
See [S], Example 4 and Remark 20. It is proven similarly to the classical counterpart for closed monoidal categories [Ke], Sect. 1.6. 
\end{proof}

As it stated below in Theorem \ref{theorskew}, the category $(\Cat_\dg(\k),\alpha,\underline{\k})$ is skew-monoidal, and the adjunction \eqref{adjskew} holds, by \eqref{adj1}. Thus it gives rise to maps $M_{ABC}$ such that \eqref{eqcompskew1}-\eqref{eqcompskew3} hold. 
The map $M$ for  $(\Cat_\dg(\k),\alpha,\underline{\k})$ is called {\it twisted composition}.

\subsection{\sc The category $\Cat_\dg(\k)$ is perfect skew monoidal}
Denote by $\underline{\k}$ the dg category with a single object * such that $\underline{\k}(*,*)=\k$. Then there are the following unit maps:
\begin{equation}
\lambda_C\colon \underline{\k}\sotimes C\to C
\end{equation}
and
\begin{equation}
\rho_C\colon C\to C\sotimes \underline{\k}
\end{equation}
which are defined as the identity maps, due to axiom $(R_3)$ in Section \ref{section111}.

\begin{theorem}\label{theorskew}
The category $\mathscr{C}=\Cat_\dg(\k)$ of small dg categories, equipped with the twisted product $-\sotimes-$, the unit $\underline{\k}$, the associativity constrains $\alpha$, and with the natural isomorphisms $\lambda_C$ and $\rho_C$, $C\in \Cat_\dg(\k)$, is a perfect skew monoidal category, see Definition \ref{defskewperf}. 

\end{theorem}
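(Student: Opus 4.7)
The plan is to split the statement into two pieces: first, establishing the perfectness assertion---that $\lambda_X$, $\rho_X$, and $\alpha_{X,\underline{\k},Y}$ are all isomorphisms; and second, verifying the five diagrammatic axioms of Definition~\ref{defskew}. In both pieces the key tool is that a dg functor out of an iterated $\sotimes$-product is determined by its action on the generators described in Section~\ref{section111}, and that formulas (i)--(iv) of Theorem~\ref{theorassoc} make $\alpha$ completely explicit on generators.

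Perfectness will be the easy part. The dg category $\underline{\k}$ has a single object $\ast$ with $\underline{\k}(\ast,\ast)=\k\cdot\id_\ast$. Inspecting the generators of $\underline{\k}\sotimes X$ and $X\sotimes\underline{\k}$, every $\varepsilon(f;g_1,\dots,g_n)$ either has $f$ a scalar multiple of $\id_\ast$ and vanishes by $(R_2)$ and $(R_3)$, or has some $g_i$ a scalar multiple of an identity and vanishes likewise. Hence $\underline{\k}\sotimes X$ and $X\sotimes\underline{\k}$ reduce to the ordinary tensor products $\underline{\k}\otimes X\simeq X$ and $X\otimes\underline{\k}\simeq X$, and the canonical maps obtained via the projection $p$ of Corollary~\ref{corproj} provide inverses to $\lambda_X$ and $\rho_X$. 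The same vanishing of $\varepsilon$-generators involving the middle unit shows that $\alpha_{X,\underline{\k},Y}$ is an isomorphism: both $(X\sotimes\underline{\k})\sotimes Y$ and $X\sotimes(\underline{\k}\sotimes Y)$ reduce to $X\sotimes Y$, and under this identification $\alpha_{X,\underline{\k},Y}$ is the identity.

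For the axioms, the unit axiom $\lambda_{\underline{\k}}\circ\rho_{\underline{\k}}=\id_{\underline{\k}}$ is immediate from the explicit description $\underline{\k}\sotimes\underline{\k}\simeq\underline{\k}$. Each of the three triangle diagrams will be checked on generators using formulas (i)--(iv) of Theorem~\ref{theorassoc}: since the unit object appears in one of the three slots, the formula for $\alpha$ simplifies drastically, all $\varepsilon$'s that involve $\underline{\k}$-morphisms vanishing, and both legs of each triangle land at the same morphism on generators.

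The hard part is the pentagon. Both routes around the pentagon are dg functors $((W\sotimes X)\sotimes Y)\sotimes Z\to W\sotimes(X\sotimes(Y\sotimes Z))$ acting as the identity on objects, so it suffices to compare their values on generators of the source. For ``planar'' generators involving only one nontrivial factor the comparison is a routine unpacking of (i)--(iii). The genuine content lies in triply nested $\varepsilon$-generators of the form $\varepsilon(\varepsilon(\varepsilon(f;g_\bullet);h_\bullet);k_\bullet)$, where each route applies formula~\eqref{eqassoc4bis} twice: both ultimately produce sums over ordered interleavings of the $k_\bullet$'s into nested insertions of $h_\bullet$'s into $g_\bullet$'s into the arguments of $f$. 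The main obstacle will be the sign bookkeeping, because the two routes apply \eqref{eqassoc4bis} in different orders (``outside-in'' versus ``inside-out''), and matching signs term by term requires a careful reorganisation. I plan to encode each term in either sum by a three-layer planar forest of nested closed intervals, and to reduce its sign to a universal expression depending only on this combinatorial datum. This is, in essence, the associativity of the brace operad composition that formula~\eqref{eqassoc4bis} was designed to model (cf.\ the remark following Theorem~\ref{theorassoc}), and once carried out it completes the verification of the five skew-monoidal axioms.
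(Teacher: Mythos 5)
Your proposal is correct and follows the same route as the paper, whose entire proof of this theorem is the single line ``It is a direct check'': a verification on the generators of the iterated twisted tensor products, with perfectness following from the vanishing of all $\varepsilon(f;g_1,\dots,g_n)$ that involve morphisms of $\underline{\k}$ (via $(R_2)$, $(R_3)$), and the pentagon on triply nested generators reducing to the brace-composition associativity modelled by \eqref{eqassoc4bis} and \eqref{eqapp5x}. You actually supply more structure than the paper does, correctly isolating the one genuinely nontrivial point (the sign bookkeeping in the pentagon for $\varepsilon(\varepsilon(\varepsilon(f;g_\bullet);h_\bullet);k_\bullet)$), which the paper leaves entirely implicit.
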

\begin{proof}

We prove this Theorem in Appendix \ref{approofs}.

\end{proof}

\subsection{\sc Explicit dg functor $M$, associated with the skew-associativity $\alpha$}
Here we compute the twisted composition $M$ explicitly, for the case of $(\Cat_\dg(\k),\sotimes,\underline{\k})$. 
Our proofs are not based on this computation, but, if we want to ask ourselves {\it how} the dg 2-operad $\mathcal{O}$ acts on the 2-graph $\Cat_\dg^\coh(\k)$, this explicit form is necessary. We will be rather concise here. The reader who is interested in more details is referred to the preliminary preprint version of this paper [Sh6], Section 4.

\subsubsection{\sc The unit map $e\colon \Coh_\dg(C,D)\sotimes C\to D$}
The dg functor $e\colon \Coh_\dg(C,D)\sotimes C\to D$ corresponds to $\id_{\Coh_\dg(C,D)}$ via the adjunction \eqref{adj1}. 
It is given on objects in the standard way $F\sotimes c\mapsto F(c)$, where $F\in \Coh_\dg(C,D)$ a dg functor $F\colon C\to D$, $c\in \Ob(C)$.

We have to specify $e$ on generating morphisms of $\Coh_\dg(C,D)\sotimes C$, which are 
(i) $\Psi\sotimes \id_c$, (ii) $\id_F\sotimes f$, (iii) $\varepsilon(\Psi; f_1,\dots,f_n)$, where $\Psi\in\Coh_\dg(C,D)$, $c\in\Ob(C)$, $F\colon C\to D$ a dg functor, $f$ a morphism in $C$. 

One has:

\begin{equation}\label{eqebis}
e(\Psi\sotimes\id_c)=\Psi_0(c)
\end{equation}
where $\Psi_0$ is 0th component of $\Psi$,

\begin{equation}\label{eqebis2}
e(\id_F\sotimes f)=F(f)
\end{equation}

\begin{equation}\label{eqe}
e(\varepsilon(\Psi; f_1,\dots,f_n))=\Psi_n(f_n\otimes\dots\otimes f_1)
\end{equation}
where $\Psi_n$ is the $n$th component of $\Psi$.

\comment
\subsubsection{\sc The dg functor $p\colon \Coh_\dg(C\sotimes D,E)\to \Coh_\dg(C,\Coh_\dg(D,E))$}
The action of the dg functor $p\colon \Coh_\dg(C\sotimes D,E)\to \Coh_\dg(C,\Coh_\dg(D,E))$ on objects is determined easily from adjunction \eqref{adj1}. The new thing is its action on morphisms. 

Fix a morphism $\Psi\in\Coh_\dg(C\sotimes D,E)(-,=)$, it is determined by its values on composable chain of morphisms in $C\sotimes D$. Similarly, the morphism $p(\Psi)$ is determined by chains of composable morphisms
$\xrightarrow{g_1}\xrightarrow{g_2}\dots\xrightarrow{g_n}$ in $C$ and
$\xrightarrow{h_1}\xrightarrow{h_2}\dots\xrightarrow{h_m}$ in $D$. 

One has:

\begin{equation}\label{eqmorphismp}
\begin{aligned}
\ &p(\Psi)(g_1,\dots, g_n)(h_1,\dots,h_m)=\sum_{1\le i_1\le j_1\le i_2\le j_2\le \dots\le j_k\le N}(-1)^{\sum_{\ell=1}^k|g|_\ell\cdot\sum_{s\le i_\ell}(|h_s|-1)} \\
&\Psi\big(h_1,\dots,h_{i_1},\varepsilon(g_1; h_{i_1+1},\dots,h_{j_1}), h_{j_1+1},\dots,h_{i_2},\varepsilon(g_2; h_{i_2+1},\dots,h_{j_2}),\dots\big)
\end{aligned}
\end{equation}
where the sum is taken over all ordered sets $\{1\le i_1\le j_1\le i_2\le j_2\le\dots\le j_{k}\le N\}$; for the case $j_\ell=i_\ell$, the corresponding term $\varepsilon(g_\ell;\ h_{i_{\ell+1}},\dots,h_{j_\ell})$ is replaced by $g_\ell\otimes \id_{-}$. 

The reader noticed that the summation set, the signs, and the overall structure of the formula are the same as for \eqref{eqassoc4bis}.

Formula \eqref{eqmorphismp} easily follows from the definition of $p$, see \eqref{eqsm1}.

\subsubsection{\sc The dg functor $L\colon \Coh_\dg(D,E)\to\Coh_\dg(\Coh_\dg(C,D),\Coh_\dg(C,E))$}
Here determine the dg functor $L$ explicitly, by its definition in \eqref{eqsm2}.

 Fix $\Psi\in \Coh_\dg(D,E)$. We compute $e(\Psi)$ for $[e,\id]\colon \Coh_\dg(D,E)\to\Coh_\dg(\Coh_\dg(C,D)\sotimes C,E)$, the composition 
\eqref{eqsm2}, computed on a {\it particular} chain of morphisms in $\Coh_\dg(C,D)\sotimes C$, each of which is of the form $\varepsilon(\Phi; f_1,\dots,f_n)$ or $\id_F\sotimes f$, where $\Phi$ is a morphism in $\Coh_\dg(C,D)$, and $f,f_1,\dots,f_n$ morphisms in $C$, and $F\colon C\to D$ a dg functor (an object of $\Coh_\dg(C,D)$). The following expression follows directly from \eqref{eqe}:
\begin{equation}\label{eql1}
\begin{aligned}
\ &e(\Psi)\Big(\varepsilon(\Phi_1; f_{11},\dots,f_{1n_1}),\dots,\id_F\sotimes f,\dots,\varepsilon(\Phi_k; f_{k1},\dots,f_{kn_k})\Big)=\\
&\Psi\Big(\Phi_1(f_{11},\dots,f_{1n_1}),\dots,F(f),\dots,\Phi_k(f_{k1},\dots,f_{kn_k})\Big)
\end{aligned}
\end{equation}
Next, we compute the image of $e(\Psi)$ under the second map in \eqref{eqsm2}:
$$
p\colon \Coh_\dg(\Coh_\dg(C,D)\sotimes C,E)\to\Coh_\dg(\Coh_\dg(C,D),\Coh_\dg(C,E))
$$
Let $F_0\xrightarrow{\Phi_1}F_1\xrightarrow{\Phi_2}F_2\xrightarrow{\Phi_3}\dots \xrightarrow{\Phi_{n-1}}F_{n-1}\xrightarrow{\Phi_n}F_n$ be a composable chain of morphisms in $\Coh_\dg(C,D)$, and 
$\xrightarrow{f_1}\xrightarrow{f_2}\dots\xrightarrow{f_N}$ a composable chain of morphisms in $C$. By \eqref{eqmorphismp}, one has:
\begin{equation}\label{eql2}
\begin{aligned}
\ &p(e(\Psi))(\Phi_1,\dots,\Phi_n)(f_1,\dots,f_n)=\sum_{1\le i_1\le j_1\le i_2\le j_2\le \dots\le j_n\le N}\pm\\
&e(\Psi)\Big(\id_{F_0}\sotimes f_1,\dots,\id_{F_0}\sotimes f_{i_1},\ \varepsilon(\Phi_1; f_{i_1+1},\dots,f_{j_1}),\  \id_{F_1}\sotimes f_{j_1+1},\dots, \varepsilon(\Phi_2;f_{i_2+1},\dots,f_{j_2}),\ \id_{F_2}\sotimes f_{j_2+1},\dots\Big)
\end{aligned}
\end{equation}
where the signs are as in \eqref{eqmorphismp}. 

By \eqref{eql1},
\begin{equation}\label{eql3}
\begin{aligned}
\ &\text{r.h.s. of }(\ref{eql2})=\sum_{1\le i_1\le j_1\le i_2\le j_2\le \dots\le j_n\le N}\pm\\
&\Psi\Big(F_0(f_1),\dots,F_0(f_{i_1}),\Phi_1(f_{i_1+1},\dots,f_{j_1}), F_1(f_{j_1+1}),\dots,F_1(f_{i_2}),\Phi_2(f_{i_2+1},\dots,f_{j_2}), F_2(f_{j_2+1}),\dots\Big)
\end{aligned}
\end{equation}
with the signs as in \eqref{eqmorphismp}.

The r.h.s. of \eqref{eql3} is our final formula for $L(\Psi)$.

Below in Section \ref{sectionm} we get the the same map from somewhat different motivation. The main reason for keeping track of $e,p,L$ coming from the the twisted associativity $\alpha$ is that the corresponding map $M$ right adjoint to $L$ (which appear below under the name {\it twisted composition}) automatically obeys \eqref{eqcompskew1}-\eqref{eqcompskew3}, as soon as $\alpha$ obeys the skew associativity axioms. \footnote{The author is thankful to the anonymous referee who suggested to him this line of reasoning.} 
\endcomment

\subsubsection{\sc The twisted composition $M_{A,B,C}\colon \Coh_\dg(B,C)\sotimes\Coh_\dg(A,B)\to\Coh_\dg(A,C)$}
By definition of $M$, the following two maps 
$([B,C]\otimes [A,B])\otimes A\to C$ coincide:
$$
([B,C]\otimes [A,B])\otimes A\xrightarrow{\alpha}[B,C]\otimes ([A,B]\otimes A)\xrightarrow{\id\otimes e}[B,C]\otimes B\xrightarrow{e}C
$$
and
$$
([B,C]\otimes [A,B])\otimes A\xrightarrow{M\otimes\id}[A,C]\otimes A\xrightarrow{e}C
$$
Denote them $t_1$ and $t_2$, correspondingly. The formulas for $\alpha$ (see Theorem \ref{theorassoc}), and formulas \eqref{eqebis}, \eqref{eqebis2}, \eqref{eqe} for $e$ provide us with explicit map $t_1$, then we find $M$ from $t_2$. 

The result is as follows.

The dg functor $M$ is defined on objects, that is, pairs of dg functors $(G,F)$ where $F\colon A\to B, G\colon B\to C$, as the composition $G\circ F$. 

Determine it on the generator morphisms. They are of the following three types: (i) $\Theta\sotimes\id_F$, (ii) $G\sotimes\Psi$, (iii)
$\varepsilon(\Theta; \Psi_1,\dots,\Psi_n)$. 

A morphism $\Theta*\id_F$, for $\Theta\colon G_0\Rightarrow G_1\colon B\to C$, $F\colon A\to B$, is mapped to
\begin{equation}\label{eqmtx1}
M(\Theta*\id_F)(f_1,f_2,\dots,f_n)=\Theta(F(f_1),F(f_2),\dots,F(f_n))
\end{equation}
and a morphism $\id_G*\Psi$, for $\Psi\colon F_0\Rightarrow F_1\colon A\to B$, $G\colon B\to C$, is mapped to
\begin{equation}\label{eqmtx2}
M(\id_G*\Psi)(f_1,\dots,f_n)=G(\Psi(f_1,\dots,f_n))
\end{equation}

The expression for $M(\varepsilon(\Theta; \Psi_1,\dots,\Psi_n))$ is more complicated, of course.
Denote by $\Theta\{\Psi_k,\dots,\Psi_1\}_{i_1,\dots,i_{k}}$ the cochain shown in Figure \ref{fig8}. 
Here $X_0,\dots, X_N$ are objects of the dg category $A$, $F_0,\dots, F_k\colon A\to B$ are dg functors, $G_0,G_1\colon B\to C$ are dg functors,
$Psi_i\colon F_{i-1}\Rightarrow F_i\in \Coh_\dg(A,B)(F_{i-1},F_i)$, $\Theta\colon G_0\Rightarrow G_1\in\Coh_\dg(B,C)(G_0,G_1)$. 
Each small arrow at the bottom line indicates a morphism $X_j\to X_{j+1}$ in $A$. 

One has:
\begin{equation}\label{eqmtx3}
\begin{aligned}
\ &M(\varepsilon(\Theta;\ \Psi_1,\dots,\Psi_k))= 
\Theta\{\Psi_k,\dots,\Psi_1\}=
\sum_{i_1,\dots,i_{k}}\pm
\Theta\{\Psi_k,\dots,\Psi_1\}_{i_1,\dots,i_{k}}
\end{aligned}
\end{equation}
where the signs are specified in [Sh6, Section 4.2]. 

\sevafigc{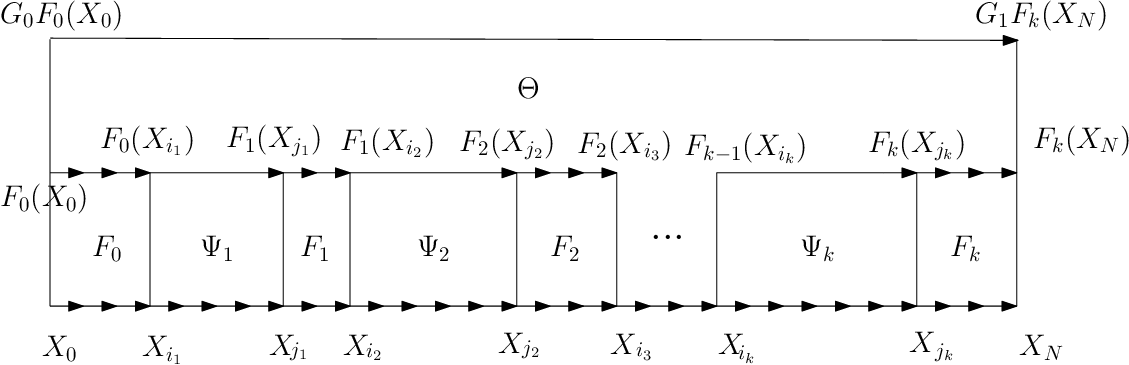}{120mm}{0}{The cochain $\Theta\{\Psi_k,\dots,\Psi_1\}_{i_1i_2,i_3,\dots,i_{k}}(f_N,\dots,f_1)$\label{fig8}}

We see that $M(\varepsilon(\Theta; \Psi_1,\dots,\Psi_n))$ is just a generalisation of the brace operations introduced in [GJ]. Note that within our approach these formulas are not definitions but a computation of $M$. In particular, \eqref{eqsuper}, \eqref{eqd0}, \eqref{eqd1} hold automatically. Not that \eqref{eqd0} becomes just a generalisation of the Getzler-Jones identity [GJ]
$$ 
[d, \Theta\{\Psi\}]=\Psi\cup\Theta\pm \Theta\cup\Psi
$$
for Hochschild cochains $\Hoch^\udot(A)$ of a dg algebra $A$.

\comment
\section{\sc The twisted composition}\label{sectionm}
\subsection{\sc }\label{sectionm1}
\subsubsection{\sc }\label{sectionm11}

As we noted in Section \ref{introbraces}, there is a similarity between our identity $(R_4)$, see \eqref{eqsuper}, with the well-known identity \eqref{brx2} for the compatibility of the brace operations $x\{y_1,\dots,y_n\}$ on the Hochschild cohomological complex with the cup-product.
Also, formula \eqref{eqd1} for $d(\varepsilon(f;\ g_1,\dots,g_n))$ looks similarly with the corresponding formula for $d(f\{g_1,\dots,g_n\})$, see \eqref{brx1}.

For three small dg categories $A,B,C$ over $\k$, we define a dg functor 
\begin{equation}\label{mapsuperx1}
M\colon \Coh_\dg(B,C)\sotimes \Coh_\dg(A,B)\to\Coh_\dg(A,C)
\end{equation}
as the left adjoint to the map $L$, see \eqref{eql3}.
We call it the {\it twisted composition}. (For general skew-closed categories, we denoted $M$ by $M$, see Section \ref{sectionscc}). 

The main fact on $M$ is the following
\begin{prop}
Diagrams \eqref{diagrcomp}, \eqref{diagrcomp2}, \eqref{diagrcomp3} below are commutative, for any four small dg categories $A,B,C,D$ over $\k$. 
\end{prop}
\begin{proof}
It is a particular case of general Proposition \ref{propsm}.
\end{proof}

The compatibility of $M$ and $\alpha$ is expressed by the commutative diagram below, for any four small dg categories $A,B,C,D$ over $\k$, the diagram
\begin{equation}\label{diagrcomp}
\xymatrix{
(\Coh_\dg(C,D)\sotimes \Coh_\dg(B,C))\sotimes\Coh_\dg(A,B)\ar[rr]^{\alpha}\ar[d]_{M}&&\Coh_\dg(C,D)\sotimes(\Coh_\dg(B,C)\sotimes\Coh_\dg(A,B))\ar[d]^{M}\\
\Coh_\dg(B,D)\sotimes \Coh_\dg(A,B)\ar[d]_{M}&&\Coh_\dg(C,D)\sotimes\Coh_\dg(A,C)\ar[d]^{M}\\
\Coh_\dg(A,D)\ar[rr]^{=}&&\Coh_\dg(A,D)
}
\end{equation}
Two remaining compatibilities of $M$ with unit maps are expressed by the two commutative diagrams below:
\begin{equation}\label{diagrcomp2}
\xymatrix{
\Coh_\dg(C,D)\sotimes\underline{\k}\ar[r]^{\id\sotimes j\hspace{10mm}}&\Coh_\dg(C,D)\sotimes\Coh_\dg(C,C)\ar[d]^{M}\\
\Coh_\dg(C,D)\ar[u]^{\rho}\ar[r]^{\id}&\Coh_\dg(C,D)
}
\end{equation}

\begin{equation}\label{diagrcomp3}
\xymatrix{
\Coh_\dg(D,D)\sotimes\Coh_\dg(C,D)\ar[r]^{\hspace{12mm}M}&\Coh_\dg(C,D)\\
\underline{\k}\sotimes \Coh_\dg(C,D)\ar[u]^{j\sotimes\id}\ar[r]^{\lambda}&\Coh_\dg(C,D)\ar[u]_{\id}
}
\end{equation}

In the rest of this Section, we provide a more explicit approach to $M$. In particular, we pursue an analogy with the brace operations on Hochschild cochains.

\subsubsection{\sc }\label{sectionm12}
Note that the dg functor $M$ corresponds, by  adjunction \eqref{adj1}, to a dg functor 
\begin{equation}\label{mapts2}
L\colon \Coh_\dg(B,C)\to \Coh_\dg(\Coh_\dg(A,B),\Coh_\dg(A,C))
\end{equation}
This dg functor is a dg categorical counterpart of the following well-known fact [Ts, Prop.4]:

Let $A$ be an associative dg algebra. Then one has a map of complexes
\begin{equation}\label{mapts}
T\colon C^\udot(A,A)[1]\to C^\udot(C^\udot(A,A),C^\udot(A,A))[1]
\end{equation}
\begin{equation}\label{maptsbis}
T(D)(D_1\otimes\dots\otimes D_n)=D\{D_1,\dots,D_n\}
\end{equation}
The map $T$ is a map of the shifted dg associative algebras. Moreover, it follows from [Ts, Prop.3] that the map $T$ is a map of dg algebras over the brace operad $\Br$.

We can shift the map \eqref{mapts} by [-1], and get a map
\begin{equation}\label{maptssh}
C^\udot(A,A)\to C^\udot(C^\udot(A,A),C^\udot(A,A))
\end{equation}

In the shifted form, the map \eqref{maptssh} is directly generalised to a more general setting of \eqref{mapts2}, and, by the adjunction, we get \eqref{mapsuperx1}.

\subsection{\sc The dg functor $M$}\label{sectionm2}
\subsubsection{\sc}
We define the dg functor  $M\colon \Coh_\dg(B,C)\sotimes \Coh_\dg(A,B)\to\Coh_\dg(A,C)$ on the objects, which are pairs $(G,F)$ of dg functors $F\colon A\to B, G\colon B\to C$,  as the composition:
$$
M(G,F)=G\circ F
$$
Let us define $M$ on the morphisms.

A morphism $\Theta*\id_F$, for $\Theta\colon G_0\Rightarrow G_1\colon B\to C$, $F\colon A\to B$, is mapped to
\begin{equation}\label{eqmtx1}
M(\Theta*\id_F)(f_1,f_2,\dots,f_n)=\Theta(F(f_1),F(f_2),\dots,F(f_n))
\end{equation}
and a morphism $\id_G*\Psi$, for $\Psi\colon F_0\Rightarrow F_1\colon A\to B$, $G\colon B\to C$, is mapped to
\begin{equation}\label{eqmtx2}
M(\id_G*\Psi)(f_1,\dots,f_n)=G(\Psi(f_1,\dots,f_n))
\end{equation}

We define $M$ as dg functor, therefore, composition of two morphisms in $\Coh_\dg(B,C)\sotimes\Coh_\dg(A,B)$ is mapped to the (standard) composition $\circ_v$ in $\Coh_\dg(A,C)$, see \eqref{compv}.

The images $M((\Theta* \id_{F_1})\sotimes (\id_{G_0}* \Psi))$ and $M((\id_{G_1}* \Psi)\sotimes (\Theta* \id_{F_0}))$ are coherent natural transformations $G_0F_0\Rightarrow G_1F_1\colon A\to C$, which both are candidates for the ``horizontal composition'' $\Theta\circ_h\Psi$.

Denote them $$\Theta\circ_h^{(1)}\Psi=M((\Theta* \id_{F_1})\sotimes (\id_{G_0}* \Psi))=M(\Theta*\id_{F_1})\circ_vM(\id_{G_0}*\Psi)$$ and $$\Theta\circ_h^{(2)}\Psi=M((\id_{G_1}* \Psi)\sotimes (\Theta* \id_{F_0}))=M(\id_{G_1}*\Psi)\circ_vM(\Theta*\id_{F_0})$$ The corresponding coherent natural transformations are shown in Figures \ref{fig3} and \ref{fig4}. These Figures should be understood as follows.

At the very bottom line, there is a chain of composable morphisms in $A$. There are ``boxes'' of two types: of type $\Psi,\Theta,\dots$, and of type $F_i,G_i,\dots$. A box of the type $\Psi,\Theta,\dots$ is drawn for the application of the corresponding {\it coherent natural transformation}; the output is a single arrow. A box of type $F_i,G_i,\dots$ is drawn for the application of the corresponding {\it dg functor}; we apply this dg functor to {\it each} of arrows in the chain of composable arrows in the input, the output is a chain of composable arrows having as many arrows as in the input. 
The very top box of the diagrams in Figures \ref{fig3} and \ref{fig4} is drawn for the {\it composition} of two arrows in the input, which is, up to a sign, the corresponding horizontal composition.

\sevafigc{pic3.eps}{100mm}{0}{The ``first'' horizontal composition $\Theta\circ_h^{(1)}\Psi$\label{fig3}}

\sevafigc{pic4.eps}{100mm}{0}{The ``second'' horizontal composition $\Theta\circ_h^{(2)}\Psi$\label{fig4}}

The matter is that two {\it different} horizontal compositions are possible. Explicitly, they are given as
\begin{equation}\label{comph1}
(\Theta\circ_h^{(1)}\Psi)(f_1\,\dots, f_{m+n})=(-1)^{|\Psi|(|f_{m+1}|+\dots+|f_{m+n}|+n)}\Theta(F_1(f_{m+1}),\dots, F_1(f_{m+n}))\circ G_0(\Psi(f_1,\dots, f_m))
\end{equation}
and
\begin{equation}\label{comph2}
(\Theta\circ_h^{(2)}\Psi)(f_1,\dots, f_{m+n})=(-1)^{|\Theta|(|f_{n+1}|+\dots+|f_{m+n}|+m)}G_1(\Psi(f_{n+1}\,\dots, f_{m+n}))  \circ \Theta(F_0(f_1),\dots, F_0(f_n))
\end{equation}
Note that the signs are defined by \eqref{compv}.

Next, we associate with the same pair $(\Psi,\Theta)$ a coherent natural transformation $h_i(\Theta,\Psi)$ (defined on a string of $m+n-1$ composable morphisms), $0\le i\le n$, see Figure \ref{fig5}.

One has:
\begin{equation}
h_i(\Theta,\Psi)(f_1,\dots, f_{m+n-1})=\Theta\big(F_0(f_1),\dots, F_0(f_i), \Psi(f_{i+1},\dots,f_{i+m}), F_1(f_{i+m+1})\,\dots, F_1(f_{m+n-1})\big)
\end{equation}

Then we set
\begin{equation}
\begin{aligned}
\ &(\Theta\{\Psi\}_{[-1]})(f_{m+n-1},\dots,f_1)=\sum_{i=0}^n(-1)^{|\Psi|(|f_{i+m+1}|+\dots+|f_{m+n-1}|+n-i-1)}h_i(\Theta,\Psi)(f_{m+n-1},\dots,f_1)
\end{aligned}
\end{equation}
\begin{equation}
M(\varepsilon(\Theta;\ \Psi))=\Theta\{\Psi\}_{[-1]}
\end{equation}

\sevafigc{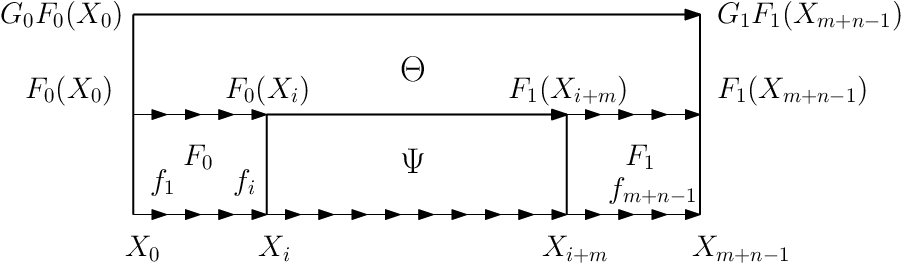}{100mm}{0}{The cochain $h_i(\Theta,\Psi)(f_{m+n-1},\dots,f_1)$\label{fig5}}

\begin{lemma}
One has:
\begin{equation}\label{eqnice1}
[d,M(\varepsilon(\Theta;\ \Psi))]=(-1)^{|\Theta|-1}\Theta\circ_h^{(1)}\Psi+(-1)^{|\Theta|(|\Psi|-1)}\Theta\circ_h^{(2)}\Psi
\end{equation}
\end{lemma}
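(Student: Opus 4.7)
My plan is to verify the identity by direct expansion. I expand
\[
\tilde m(\varepsilon(\Theta;\Psi))=\Theta\{\Psi\}_{[-1]}=\sum_{i=0}^n(-1)^{\sigma_i}\,h_i(\Theta,\Psi),
\]
where $\sigma_i=|\Psi|(|f_{i+m+1}|+\cdots+|f_{m+n-1}|+n-i-1)$ is the Koszul sign from the definition, and apply the total differential $\delta_\tot=\delta_\dg+\delta$ of the complex $\Coh_\dg(A,C)(G_0F_0,G_1F_1)$ to each $h_i(\Theta,\Psi)$ separately, before collecting terms in the signed sum over $i$.

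For fixed $i$, I would sort the summands of $\delta_\tot h_i$ according to the tripartition of the $m+n-1$ arguments of $h_i$ into the $F_0$-block (positions $1$ to $i$), the $\Psi$-block (positions $i+1$ to $i+m$), and the $F_1$-block (positions $i+m+1$ to $m+n-1$). Three types of contributions arise. First, merges $d_j$ occurring strictly inside one block, taken with the internal differentials $\delta_\dg f_k$, combine via the dg-functoriality of $F_0,F_1$ and the interior cosimplicial identities of $\Psi$ into evaluations of the form $h_i$ with $\Theta$ replaced by $\delta_\tot\Theta$ or $\Psi$ replaced by $\delta_\tot\Psi$; these are the derivation contributions, compatible with the $\varepsilon(d\Theta;\Psi)$ and $\varepsilon(\Theta;d\Psi)$ pieces of the $C\sotimes D$-differential \eqref{eqd0}, and they do not contribute to the right-hand side of the lemma. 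Second, the face maps $d_j$ that straddle the $\Psi$-block at $j=i$ and $j=i+m$ are shared between consecutive summands $h_i$ and $h_{i\pm 1}$; a sign comparison shows that they cancel pairwise in the signed sum $\sum_i(-1)^{\sigma_i}(\cdots)$, completing a telescoping. Third, the outermost face maps, composing $G_1F_1$ on the left and $G_0F_0$ on the right, collapse after an analogous telescoping to two extreme survivors: the $i=0$ piece, which reads $\Theta(F_1(f_{m+1}),\dots,F_1(f_{m+n}))\circ G_0(\Psi(f_1,\dots,f_m))=\Theta\circ_h^{(1)}\Psi$, and the $i=n$ piece, which reads $G_1(\Psi(f_{n+1},\dots,f_{m+n}))\circ\Theta(F_0(f_1),\dots,F_0(f_n))=\Theta\circ_h^{(2)}\Psi$.

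\textbf{Main obstacle.} The principal difficulty is the sign bookkeeping. The overall coefficient of each surviving term is a product of: the Koszul sign $\sigma_i$ from the definition of $\Theta\{\Psi\}_{[-1]}$; the alternating signs and the corrections $\varepsilon_j$ in the cochain differential \eqref{diffsigns}; the sign of the vertical composition \eqref{compv}; and the shift-by-$[-1]$ signs (cf.\ Appendix \ref{app13} and Remark \ref{rembraceshift}). The most economical route is to first verify the analogous identity for the unshifted brace operation $f\{g_1\}$ on the Hochschild complex $C^\udot(A,A)$ --- this is the $n=1$ case of \eqref{brx1}, giving $[d,f\{g_1\}]=(-1)^{|g_1||f|+|f|+1}g_1\cdot f+(-1)^{|f|+|g_1|+1}f\cdot g_1$ --- and then to transfer the calculation to the categorical setting via the correspondence of Section \ref{sectionm12}, under which the two Hochschild cup products are sent to $\Theta\circ_h^{(1)}\Psi$ and $\Theta\circ_h^{(2)}\Psi$, and the shift converts the unshifted prefactors into the claimed $(-1)^{|\Theta|-1}$ and $(-1)^{|\Theta|(|\Psi|-1)}$.
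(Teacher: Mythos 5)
The paper's entire proof of this lemma is the sentence ``It is a direct check'', so your direct expansion is the same approach, and your fallback route --- deducing the statement from the $n=1$ case of the brace identity and transferring it to the categorical, shifted setting --- is exactly how the paper argues the subsequent proposition that $\tilde m$ is a dg functor. Two concrete points need repair before either route closes.

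First, the surviving terms are mislocated. Every summand of $\delta_\tot\bigl(\sum_i(-1)^{\sigma_i}h_i(\Theta,\Psi)\bigr)$ --- outer faces, inner merges, and internal differentials alike --- still has $\Theta$ as the outermost operation, possibly pre- or post-composed with $(G_jF_j)(f)$ of a \emph{single} argument; no such term has an entire value $\Psi(f_1,\dots,f_m)$ sitting outside $\Theta$, which is what $\Theta\circ_h^{(1)}\Psi$ and $\Theta\circ_h^{(2)}\Psi$ look like. These two terms arise from the \emph{extreme cosimplicial faces of $\Theta$ itself}, i.e.\ the summands $G_1(g)\circ\Theta(\cdots)$ and $\Theta(\cdots)\circ G_0(g)$ of $\delta\Theta$ with $g=\Psi(\cdots)$ substituted into the popped-out slot; they enter the computation either through the closedness of $\Theta$ (rewriting $d_C\circ\Theta$ via $\delta\Theta$) or, if $[d,-]$ is read as the commutator of $d$ with the bilinear operation $(\Theta,\Psi)\mapsto\Theta\{\Psi\}_{[-1]}$, through the term $(\delta_\tot\Theta)\{\Psi\}_{[-1]}$. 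A telescoping over the outermost faces of $\delta H$ alone will never produce them. Second, your quoted $n=1$ specialization of \eqref{brx1} has the wrong sign on its last term: for $n=1$ the sum $\sum_{i=1}^{n-1}(|g_i|+1)$ is empty, so that term is $(-1)^{|f|}f\cdot g_1$, not $(-1)^{|f|+|g_1|+1}f\cdot g_1$; since the entire content of the lemma is the pair of signs, this is the one place a slip is fatal. The cleanest repair is to observe that, once $\tilde m$ is known to be a dg functor, the lemma is literally the image under $\tilde m$ of relation \eqref{eqd0} in $\Coh_\dg(B,C)\sotimes\Coh_\dg(A,B)$, which pins down both signs at once.
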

\begin{proof}
It is a direct check.
\end{proof}

\subsubsection{\sc }

Finally, we define $M(\varepsilon(\Theta;\ \Psi_1,\dots,\Psi_k))$.

\sevafigc{pic8.eps}{120mm}{0}{The cochain $\Theta\{\Psi_k,\dots,\Psi_1\}_{[-1],i_1i_2,i_3,\dots,i_{k}}(f_N,\dots,f_1)$\label{fig8}}

We use notation $\Theta\{\Psi_k,\dots,\Psi_1\}_{[-1],i_1i_2,i_3,\dots,i_{k}}$ for the cochain shown in Figure \ref{fig8}.

One sets:
\begin{equation}
\Theta\{\Psi_k,\dots,\Psi_1\}_{[-1]}(f_N,\dots,f_1)=\sum_{i_1,\dots,i_{k}}(-1)^{\sum_{s=1}^k|\Psi_s|(\sum_{j\ge i_{s}}(|f_j|+1))}
\Theta\{\Psi_k,\dots,\Psi_1\}_{[-1],i_1,\dots,i_{k}}(f_N,\dots,f_1)
\end{equation}

\begin{equation}\label{eqmtx3}
\begin{aligned}
\ &M(\varepsilon(\Theta;\ \Psi_1,\dots,\Psi_k))= 
\Theta\{\Psi_k,\dots,\Psi_1\}_{[-1]}=
\sum_{i_1,\dots,i_{k}}
\Theta\{\Psi_k,\dots,\Psi_1\}_{[-1],i_1,\dots,i_{k}}
\end{aligned}
\end{equation}
\begin{prop}
Equations \eqref{eqmtx1},\eqref{eqmtx2},\eqref{eqmtx3} define a dg functor 
$$
M\colon \Coh_\dg(B,C)\sotimes \Coh_\dg(A,B)\to\Coh_\dg(A,C)
$$
That is, the following identities hold:
\begin{equation}\label{eqpropmt1}
d(M(\varepsilon(\Theta;\ \Psi_1,\dots,\Psi_k)))=M(d\varepsilon(\Theta;\ \Psi_1,\dots,\Psi_k))
\end{equation}
\begin{equation}\label{eqpropmt2}
\begin{aligned}
\ &\sum_{\ell=0}^k(-1)^{|\Theta_1|(|\Psi_{\ell+1}|+\dots+|\Psi_k|+k-\ell)}M(\varepsilon(\Theta_2;\ \Psi_{\ell+1},\dots,\Psi_k))\cup M(\varepsilon(\Theta_1;\ \Psi_1,\dots,\Psi_\ell))=\\
&M(\varepsilon(\Theta_2\cup \Theta_1;\ \Psi_1,\dots,\Psi_k))
\end{aligned}
\end{equation}
\end{prop}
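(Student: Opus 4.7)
The plan is to verify the two identities \eqref{eqpropmt1} and \eqref{eqpropmt2} by direct computation, exploiting the structural parallel between the relations $(R_1)$--$(R_4)$ for $C\sotimes D$ and the classical brace identities \eqref{brx1}, \eqref{brx2} on the Hochschild cochain complex discussed in Section \ref{introbraces}. As a preliminary, I would check that the formulas \eqref{eqmtx1} and \eqref{eqmtx2} defining $\tilde m$ on the ``non-twisted'' generators $\Theta*\id_F$ and $\id_G*\Psi$ are chain maps and that they respect the composition of such generators. This reduces to the statement that vertical composition $\circ_v$ from \eqref{compv} satisfies the Leibniz rule \eqref{lr2bis}, together with the functoriality of $\Coh_\dg(-,-)$ in each slot; both are direct consequences of the definitions in Section \ref{sectionapp1}.

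Next, to establish \eqref{eqpropmt1}, I would expand the right-hand side using \eqref{eqd1} for $d\varepsilon(\Theta;\Psi_1,\dots,\Psi_k)$ and the conventions \eqref{firstdg}, \eqref{diffsigns} for how $d$ acts on the arguments. Applying $\tilde m$ produces three kinds of terms: cup products of the form $\tilde m(\id_{G_{0\;\mathrm{or}\;1}}*\Psi_1)\circ_v\tilde m(\varepsilon(\Theta;\Psi_2,\dots,\Psi_k))$ and its mirror, corresponding to the two ``boundary'' summands of \eqref{eqd1}; brace terms of the form $\tilde m(\varepsilon(\Theta;\dots,\Psi_{i+1}\circ_v\Psi_i,\dots))$, corresponding to the $\circ$ summand; and internal-differential terms that come from $\delta_\dg$ applied inside $\Psi_i$ or $\Theta$. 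The left-hand side, computed from the explicit cochain $\Theta\{\Psi_k,\dots,\Psi_1\}_{[-1]}$ of \eqref{eqmtx3} by applying $\delta_{\mathrm{tot}}$ term by term to each diagram in Figure \ref{fig8}, reassembles the very same three families. This is precisely the coherent categorified (and desuspended) version of the Tsygan identity \eqref{brx1}.

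For \eqref{eqpropmt2}, I would observe that under $\tilde m$ the relation $(R_4)$, \eqref{eqsuper}, is translated into the cup-product formula for brace operations. Concretely, expanding $\tilde m(\varepsilon(\Theta_2\cup\Theta_1;\Psi_1,\dots,\Psi_k))$ via \eqref{eqmtx3} partitions the indices $i_1\le\dots\le i_k$ labelling the positions of the $\Psi_j$-boxes inside the diagram of $\Theta_2\cup\Theta_1$ into those falling inside the $\Theta_1$-block and those inside the $\Theta_2$-block, giving exactly the $k{+}1$ summands of the left-hand side of \eqref{eqpropmt2}. This reflects the classical brace-with-cup identity \eqref{brx2}, again modulo the shift $[-1]$ recorded in Appendix \ref{app13}.

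The main obstacle throughout is bookkeeping of signs: \eqref{eqd1}, \eqref{diffsigns}, \eqref{compv} and \eqref{eqmtx3} carry several independent sign contributions (total Hochschild degree, positions of the inserted boxes, and internal degrees of the $f_j$), and the desuspension from $C^\udot(A,A)[1]$ to $\Coh_\dg(A,C)$ shifts them further, as Remark \ref{rembraceshift} warns. I would handle this by first verifying \eqref{eqpropmt1} for $k=1$, where the result collapses to \eqref{eqnice1}, then for $k=2$ where all the new sign phenomena appear, and finally using Appendix \ref{sectionapp} to extract the general sign pattern. Once the local signs for the single insertion match Tsygan's conventions up to the prescribed shift, the general cases of \eqref{eqpropmt1} and \eqref{eqpropmt2} follow by direct transcription of the classical arguments of [Ts, Prop.3--4].
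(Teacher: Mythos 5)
Your proposal is correct, and it rests on the same essential input as the paper's proof, namely the categorified and shifted Tsygan brace identities \eqref{brx1}, \eqref{brx2} (in the form \eqref{eqapp7x}, \eqref{eqapp4x} of Appendix \ref{app13}). The difference is one of routing. You verify \eqref{eqpropmt1} and \eqref{eqpropmt2} directly on the generators $\varepsilon(\Theta;\Psi_1,\dots,\Psi_k)$ of the twisted tensor product, expanding $d\varepsilon$ via \eqref{eqd1} and the relation $(R_4)$ via \eqref{eqsuper}, and matching the resulting three families of terms against $\delta_{\mathrm{tot}}$ applied to the explicit cochain \eqref{eqmtx3}. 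The paper instead first builds the intermediate dg functor $\tilde{m}_1\colon \Coh_\dg(B,C)\to\Coh_\dg(\Coh_\dg(A,B),\Coh_\dg(A,C))$ generalising the shifted Tsygan map \eqref{maptssh}, checks that \emph{it} is a dg functor by transcribing [Ts, Prop.\ 3 and 4], and then obtains $\tilde{m}$ in one stroke from the adjunction \eqref{adj1} of Proposition \ref{prop1}; compatibility with the relations and the differential of $\sotimes$ is then automatic, since the adjunction was designed precisely so that \eqref{eqd1} and \eqref{eqsuper} correspond to the differential and the vertical composition in the target. Your route duplicates by hand the work that the adjunction packages away, so it is longer and more sign-sensitive, but it is self-contained and makes visible exactly which summand of \eqref{eqd1} maps to which summand of the brace differential; the paper's route is cleaner but defers the real computation to loc.\ cit. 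Either way the mathematical content is identical, and your plan of first checking $k=1$ (recovering \eqref{eqnice1}) and $k=2$ before extracting the general sign pattern is a sound way to control the bookkeeping.
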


\begin{proof}
By adjunction \eqref{adj1}, any dg functor $\tilde{\phi}_1\colon \Coh_\dg(B,C)\to\Coh_\dg(\Coh_\dg(A,B),\Coh_\dg(A,C))$
gives a unique dg functor $\phi\colon \Coh_\dg(B,C)\sotimes\Coh_\dg(A,B)\to\Coh_\dg(A,C)$.

The (shifted) Tsygan map \eqref{maptssh} is directly generalised to a more general dg functor
$$
M_1\colon \Coh_\dg(B,C)\to\Coh_\dg(\Coh_\dg(A,B),\Coh_\dg(A,C))
$$
$$
\Theta\mapsto (\Psi_1\otimes\dots\otimes\Psi_k\mapsto \Theta\{\Psi_1,\dots,\Psi_k\}_{[-1]})
$$
The fact that this map $M_1$ is a dg functor is immediately translated from the proof that \eqref{maptssh} is a map of complexes and preserves the cup-products, see [Ts, Prop. 3 and 4] and Appendix \ref{app13}.

By adjunction \eqref{adj1}, it gives a dg functor $M$. One checks directly that it is given by \eqref{eqmtx3}.

\end{proof}

\subsection{\sc The compatibility of $M$ with $\alpha$}\label{sectionm3}
Here we prove
\begin{prop}
The twisted composition $M$ and the one-side associativity map $\alpha$ are compatible so that the diagram \eqref{diagrcomp} commutes. 
\end{prop}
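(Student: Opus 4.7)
The plan is to use the adjunction \eqref{adj1} to pass from dg functors out of $(\Coh_\dg(C,D)\sotimes \Coh_\dg(B,C))\sotimes\Coh_\dg(A,B)$ to a more manageable characterization, and then to reduce commutativity of \eqref{diagrcomp} to an equation on a generating system of morphisms. By Theorem \ref{theorassoc} and the presentation of the twisted tensor product given in Section \ref{section111}, the source dg category is generated by morphisms of four types: $(\phi\star\id_Y)\star\id_Z$, $(\id_X\star\psi)\star\id_Z$, $(\id_X\star\id_Y)\star h$, and, most importantly, $\varepsilon(\varepsilon(f;g_1,\dots,g_k);h_1,\dots,h_N)$ with $f\in\Coh_\dg(C,D)$, $g_i\in\Coh_\dg(B,C)$, $h_j\in\Coh_\dg(A,B)$. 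On the first three types of generators, both compositions in \eqref{diagrcomp} reduce to the pre- and post-composition rules \eqref{eqmtx1}, \eqref{eqmtx2} for $\tilde m$ combined with \eqref{eqassoc1}--\eqref{eqassoc1bisbis}, and the agreement is essentially tautological.

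The heart of the argument is the fourth generator. Computing the left-then-down path, $\tilde m\sotimes\id$ sends $\varepsilon(f;g_1,\dots,g_k)$ to the brace operation $f\{g_1,\dots,g_k\}_{[-1]}\in\Coh_\dg(B,D)$, yielding the morphism $\varepsilon(f\{g_1,\dots,g_k\}_{[-1]};h_1,\dots,h_N)$; a second application of $\tilde m$ then gives
\[
\bigl(f\{g_1,\dots,g_k\}_{[-1]}\bigr)\{h_1,\dots,h_N\}_{[-1]}.
\]
Computing the right-then-down path, $\alpha$ expands the generator via \eqref{eqassoc4bis} into a sum of terms $\pm\,\varepsilon\bigl(f;\,h_1,\dots,\varepsilon(g_\ell;h_{i_\ell+1},\dots,h_{j_\ell}),\dots\bigr)$; then $\id\sotimes\tilde m$ replaces each inner $\varepsilon(g_\ell;\cdots)$ by $g_\ell\{h_{i_\ell+1},\dots,h_{j_\ell}\}_{[-1]}$, and a final $\tilde m$ produces
\[
\sum \pm\, f\bigl\{h_1,\dots,g_\ell\{h_{i_\ell+1},\dots,h_{j_\ell}\}_{[-1]},\dots\bigr\}_{[-1]}.
\]
The equality of these two expressions is precisely the brace composition identity for iterated braces (the defining higher-brace axiom), which is exactly the content of Tsygan's statement that the map $T$ of \eqref{mapts} is a morphism of $\Br$-algebras, as recorded in the discussion of Section \ref{introbraces} and Appendix \ref{app13}. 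Thus the combinatorial pattern of terms matches by tautology: the sum over $\{i_1\le j_1\le\dots\le j_k\}$ in \eqref{eqassoc4bis} is in bijection with the terms appearing in the brace-composition formula.

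The main obstacle is sign bookkeeping. The signs $(-1)^{\sum_\ell |g_\ell|(\sum_{s\le i_\ell}(|h_s|-1))}$ appearing in \eqref{eqassoc4bis} must be reconciled with the signs produced by \eqref{eqmtx3} (that is, the signs of the shifted brace operations $\{\cdot\}_{[-1]}$) and by the vertical composition convention \eqref{compv}. These reconciliations are a straightforward but tedious exercise in the shifted-brace formalism of Appendix \ref{app13}, and they match by construction since the degree shifts in the definition of $\varepsilon(\cdot;\cdots)$ in Section \ref{section111} were chosen precisely so that $\tilde m$ intertwines the relations $(R_1)$--$(R_4)$ with the brace identities \eqref{brx1}--\eqref{brx2}.

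Once the $\varepsilon(\varepsilon(\cdot;\cdots);\cdots)$ case is established, dg-functoriality of both paths extends the equality to all morphisms generated by compositions and the differential, and relations $(R_1)$--$(R_4)$ ensure well-definedness. Thus \eqref{diagrcomp} commutes.
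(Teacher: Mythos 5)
Your proposal is correct and follows essentially the same route as the paper: reduce the commutativity of \eqref{diagrcomp} to its effect on the generators of the twisted tensor product, where the only nontrivial case, $\varepsilon(\varepsilon(f;g_1,\dots,g_k);h_1,\dots,h_N)$, turns the two paths of the diagram into the two sides of the iterated-brace (Tsygan) identity \eqref{eqapp5x} in its shifted form. The paper's own proof states exactly this reduction and invokes the same identity, leaving the sign bookkeeping as a routine generalization of the $C^\udot(A,A)$ case.
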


\begin{proof}

\end{proof}

\begin{remark}{\rm
The commutativity of \eqref{diagrcomp} essentially amounts to the identity 
\begin{equation}
\begin{aligned}
\ &(\Theta\{\Psi_1,\dots,\Psi_m\}_{[-1]})\{\Gamma_1,\dots,\Gamma_N\}_{[-1]}=\sum_{1\le i_1\le j_1\le i_2\le \dots\le j_m\le N}
(-1)^{\sum_{\ell=1}^m|\Psi_\ell|(\sum_{s\le i_\ell}(|\Gamma_s|-1))  }\\
&\Theta\{\Gamma_1,\dots,\Gamma_{i_1},\Psi_1\{\Gamma_{i_1+1},\dots,\Gamma_{j_1}\}_{[-1]},\Gamma_{j_1+1},\dots,\Gamma_{i_2},\Psi_2\{\Gamma_{i_2+1},\dots,\Gamma_{j_2}\}_{[-1]},\dots\}_{[-1]}
\end{aligned}
\end{equation}
for $\Theta\in \Coh_\dg(C,D)$, $\Psi_1,\dots,\Psi_m\in\Coh_\dg(B,C), \Gamma_1,\dots,\Gamma_N\in\Coh_\dg(A,B)$.

For the case of $C^\udot(A,A)$ this identity is known, see \eqref{eqapp5x}. We refer it to as the Tsygan identity. The general case amounts to the same phenomenon. 
}
\end{remark}

\endcomment

\section{\sc The 2-operad $\mathcal{O}$}\label{section2opn}
We refer the reader to Appendix \ref{sectionba} for a brief and elementary account on 2-operads. For more thorough treatment, see [Ba3-5], [BM1,2]. 
\subsection{\sc }\label{section2opn1}
Recall our notation
$$
I_{n_1,\dots,n_k}=I_{n_k}\sotimes (I_{n_{k-1}}\sotimes (\dots (I_{n_2}\sotimes I_{n_1})\dots))
$$
Below we use a shorter form of it:
$$
I_{n_1,\dots,n_k}=I_D
$$
where $D=(n_1,\dots,n_k)$ is the corresponding 2-globular pasting diagram (see Section \ref{sectionbcomp}).

Let $D_1=(n_1^1,\dots,n_{k_1}^1), D_2=(n_1^2,\dots,n_{k_2}^2)$ be two 2-globular pasting diagrams. We denote
$$
[D_1,D_2]=(n_1^1,\dots,n_{k_1}^1,n_1^2,\dots,n_{k_2}^2)
$$
the 2-globular pasting diagram obtained by the horizontal concatenation of the 2-globular pasting diagrams $D_1$ and $D_2$.

For a sequence $D_1,\dots,D_t$ of 2-globular pasting diagrams, we define similarly the total 2-globular pasting diagram $[D_1,\dots,D_t]$, so that
$$
[D_1,\dots,D_t]=[[D_1,\dots, D_{t-1}],D_t]
$$

We denote the ordered sequence $D_1,\dots,D_t$ by $\mathbf{D}$, and use the notation
\begin{equation}\label{eqvert0}
I_{\mathbf{D}}=I_{D_1,\dots,D_t}=I_{D_t}\sotimes (I_{D_{t-1}}\sotimes (\dots (I_{D_2}\sotimes I_{D_1})\dots))
\end{equation}

Let $D_1,\dots,D_t$ be a sequence of 2-globular pasting diagrams. We construct a dg functor
\begin{equation}\label{mapup}
\Upsilon(D_1,\dots,D_t)\colon I_{D_1,\dots,D_t}\to I_{[D_t, D_{t-1},\dots, D_1]}
\end{equation}

Let us start with the case $t=2$. The dg functor $\Upsilon(D_1,D_2)\colon I_{D_1}\sotimes I_{D_2}\to I_{[D_2,D_1]}$ is constructed as
\begin{equation}
\begin{aligned}
\ &\Big(I_{n_k}\sotimes\big(I_{n_{k-1}}\sotimes(\dots \sotimes (I_{n_2}\sotimes I_{n_1})\dots)\big)\Big)\sotimes \Big(I_{m_\ell}\sotimes \big(I_{m_{\ell-1}}\sotimes(\dots\sotimes (I_{m_2}\sotimes I_{m_1})\dots)\big)\Big)\xrightarrow{\alpha}\\
&I_{n_k}\sotimes\Big(\big(I_{n_{k-1}}\sotimes (I_{n_{k-2}}\sotimes \dots(I_{n_2}\sotimes I_{n_1})\dots)\big)\sotimes  \Big(I_{m_\ell}\sotimes (I_{m_{\ell-1}}\sotimes(\dots\sotimes (I_{m_2}\sotimes I_{m_1})\dots))\Big)\Big)\xrightarrow{\alpha}\\
&I_{n_k}\sotimes \Big(I_{n_{k-1}}\sotimes \Big(\big(I_{n_{k-2}}\sotimes (\dots)\big)\sotimes\Big(I_{m_\ell}\sotimes (I_{m_{\ell-1}}\sotimes(\dots\sotimes (I_{m_2}\sotimes I_{m_1})\dots))\Big)\Big)\Big)\xrightarrow{\alpha}\\
&\dots\xrightarrow{\alpha}I_{[D_2,D_1]}
\end{aligned}
\end{equation}
It is the composition of maps, each of which is an appropriate associativity constraint $\alpha$, see Theorem \ref{theorassoc}.

Now we define $\Upsilon(D_1,\dots,D_t)$ for any $t$ as the composition
\begin{equation}
I_{D_1,D_2,\dots,D_t}\to I_{[D_2,D_1],D_3,\dots,D_t}\to I_{[D_3,D_2,D_1],D_4,\dots,D_t}\to\dots\to I_{[D_t,D_{t-1},\dots,D_1]}
\end{equation}
of arrows each of this is given by $\Upsilon(D^\prime,D^\pprime)$.

The main technical point is that the maps $\Upsilon(D_1,\dots,D_t)$ are subject to some associativity, which we are going to formulate.

Let $D_1^1,\dots,D_{t_1}^1;\dots;D^k_1,\dots,D^k_{t_k}$ sequence of (sequences of) 2-globular pasting diagrams. 

We use notation
$$
I_{\mathbf{D}^i}=I_{D_1^i,\dots,D^i_{t_i}}
$$
Consider
\begin{equation}\label{notbf}
I_{\mathbf{D}^1,\dots,\mathbf{D}^k}:=I_{\mathbf{D}^k}\sotimes(I_{\mathbf{D}^{k-1}}\sotimes(\dots\sotimes (I_{\mathbf{D}^2}\sotimes I_{\mathbf{D}^1})\dots))
\end{equation}
As well, denote 
$$
[\mathbf{D}^i]:=[D^i_{t_i},\dots,D^i_1]
$$
and
$$
[[\mathbf{D}^k,\dots,\mathbf{D}^1]]:=[[\mathbf{D}^k],\dots,[\mathbf{D}^1]]
$$
There are two maps
\begin{equation}
\Upsilon_1,\Upsilon_2\colon I_{\mathbf{D}^1,\dots,\mathbf{D}^k}\to I_{[[\mathbf{D}^k,\dots,\mathbf{D}^1]]}
\end{equation}
They are defined as follows:
\begin{equation}\label{ups1}
\Upsilon_1\colon \ \ I_{\mathbf{D}^1,\dots,\mathbf{D}^k}\xrightarrow{\Upsilon} I_{[\mathbf{D}^1],\dots,[\mathbf{D}^k]}\xrightarrow{\Upsilon} I_{[[\mathbf{D}^k],\dots,[\mathbf{D}^1]]}
\end{equation}
and
\begin{equation}\label{ups2}
\Upsilon_2\colon\ \  I_{\mathbf{D}^1,\dots,\mathbf{D}^k}\xrightarrow{\tilde{\Upsilon}}I_{D_1^1,\dots,D_{t_1}^1,\dots,D_1^{k},\dots,D_{t_k}^k}\xrightarrow{\Upsilon}I_{[[\mathbf{D}^k,\dots,\mathbf{D}^1]]}
\end{equation}
In \eqref{ups2}, the first arrow in not literally equal to $\Upsilon(-,\dots,-)$, but is defined similarly; we leave the details to the reader.

\begin{prop}\label{propups}
In the notations as above, the two maps $\Upsilon_1,\Upsilon_2\colon I_{\mathbf{D}^1,\dots,\mathbf{D}^k}\to I_{[[\mathbf{D}^k,\dots,\mathbf{D}^1]]}$ are equal.
\end{prop}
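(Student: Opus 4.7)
The plan is to reduce Proposition \ref{propups} entirely to the coherence theorem for perfect skew monoidal categories (Proposition \ref{propskew}), which applies to $(\Cat_\dg(\k),\sotimes,\underline{\k},\alpha,\lambda,\rho)$ by Theorem \ref{theorskew}. The two maps $\Upsilon_1$ and $\Upsilon_2$ go between the same source $I_{\mathbf{D}^1,\dots,\mathbf{D}^k}$ and target $I_{[[\mathbf{D}^k,\dots,\mathbf{D}^1]]}$, both of which are iterated twisted tensor products of the same ordered sequence of atomic factors $I_{n_j^i}$ (namely $I_{n_1^1},\dots,I_{n_{t_1}^1},\dots,I_{n_1^k},\dots,I_{n_{t_k}^k}$, in the canonical linear order determined by the concatenation of the $\mathbf{D}^i$), differing only in their bracketings.

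The first task is to verify that both $\Upsilon_1$ and $\Upsilon_2$ unfold into compositions of instances of the associator $\alpha_{-,-,-}$ alone, with no appearance of $\lambda$ or $\rho$. For $\Upsilon_1$, this is clear from the definition of each $\Upsilon(-,\dots,-)$ given before the proposition, which is itself an iterated composition of $\alpha$'s. For $\Upsilon_2$, one must check that the auxiliary map $\tilde{\Upsilon}$ in \eqref{ups2}, which rearranges the bracketing of the outer $k$-fold product to split each $I_{\mathbf{D}^i}$ factor into its own atomic constituents, is again built from applications of $\alpha$ at various levels, exactly as in the definition of $\Upsilon$ for an individual $\mathbf{D}^i$; followed by the outer $\Upsilon$, which is also purely $\alpha$-based. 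This is a direct bookkeeping check.

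Once both maps are recognised as morphisms in the free $\alpha$-closure of the set of bracketings of a fixed word of objects, Proposition \ref{propskew} applies verbatim and yields $\Upsilon_1=\Upsilon_2$.

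The main (and essentially only) obstacle is the bookkeeping step above: one must make sure no hidden unit morphism sneaks in. Since all the $I_n$ factors are nontrivial and the constructions in \eqref{ups1} and \eqml{ups2} consist exclusively of rebracketing moves, the only structural maps used are instances of $\alpha_{C,D,E}$, so the coherence theorem applies cleanly and the proposition follows.
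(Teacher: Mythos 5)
Your proposal is correct and takes essentially the same route as the paper: the paper's own proof of Proposition \ref{propups} consists precisely of invoking Theorem \ref{theorskew} together with the coherence statement of Proposition \ref{propskew}. Your extra bookkeeping step (checking that no unit maps occur) is harmless but not strictly necessary, since Proposition \ref{propskew} already covers composites built from $\alpha$, $\lambda$, and $\rho$ between a fixed pair of objects.
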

\begin{proof}
It follows from Theorem \ref{theorskew} and Proposition \ref{propskew}.
\end{proof}

\subsection{\sc }\label{section2opn2}
Recall the 2-sequence $\mathcal{O}$. 

For a 2-globular pasting diagram $D=(n_1,\dots,n_k)$, set 
$$
I_{n_1,\dots,n_k}=I_{n_k}\sotimes (I_{n_{k-1}}\sotimes (\dots (I_{n_2}\sotimes I_{n_1})\dots))
$$
and 
\begin{equation}
\mathcal{O}(D)=I_{n_1,\dots,n_k}(\min,\max)
\end{equation}
where $\min=(0,0,\dots,0),\max=(n_k,\dots,n_1)$.
Recall that all $n_i\ge 1$. 

We start with the following Lemma:

\begin{lemma}\label{lemmann}
Let $D$ be a 2-globular pasting diagram and let $i\colon D^0\to  D$ be a connected subdiagram having the same set of objects. Let $D=(n_1,\dots,n_k), i(D^0)=([a_1,b_1],[a_2,b_2],\dots,[a_k,b_k])$, $0\le a_i<b_i\le n_i$, $D^0=(b_1-a_1,b_2-a_2,\dots,b_k-a_k)$. Then
$$
I_{D^0}(\min,\max)=I_D(\min_0,\max_0)
$$
where $\min_0=(a_k,a_{k-1},\dots,a_1)$, $\max_0=(b_k,b_{k-1},\dots,b_0)$.
\end{lemma}
\begin{proof}
It is clear. More generally, the embedding $I_{D^0}\to I_D$ is fully faithful. 
\end{proof}

More generally, let $D_1,\dots,D_n$ be 2-globular pasting diagrams,  $I_{D_1,\dots,D_n}$ the dg category \eqref{eqvert0}.
\begin{lemma}\label{lemmann2}
Let $D_1,\dots,D_n$ be 2-globular pasting diagram, and let $i_k\colon D_k^0\to D_k$ connected subdiagrams, $1\le k\le n$, such that $i_k(\min)=\min_0^k, i_k(\max)=\max_0^k$. Denote $\min_{0,\tot}=(\min_n^0,\dots,\min_1^0)$, $\max_{0,\tot}=(\max_n^0,\dots,\max_1^0)$. Then
\begin{equation}\label{eqvert3}
I_{D_1^0,\dots,D_n^0}(\min,\max)=I_{D_1,\dots,D_n}(\min_{0,\tot},\max_{0,\tot})
\end{equation}
and the following diagram commutes:
\begin{equation}\label{eqvert4}
\xymatrix{
I_{D_1^0,\dots,D_n^0}(\min,\max)\ar[rr]^{\Upsilon}\ar[d]&&I_{[D_1^0,\dots,D_n^0]}(\min,\max)\ar[d]\\
I_{D_1,\dots,D_n}(\min_{0,\tot},\max_{0,\tot})\ar[rr]^{\Upsilon}&&I_{[D_1,\dots,D_n]}(\min_{0,\tot},\max_{0,\tot})
}
\end{equation}
where $\Upsilon$ is the map \eqref{mapup}, and the vertical maps are given by \eqref{eqvert3}.  More generally, the diagram of dg categories below commutes:
\begin{equation}\label{eqvert5}
\xymatrix{
I_{D_1^0,\dots,D_n^0}\ar[rr]^{\Upsilon}\ar[d]&&I_{[D_1^0,\dots,D_n^0]}\ar[d]\\
I_{D_1,\dots,D_n}\ar[rr]^{\Upsilon}&&I_{[D_1,\dots,D_n]}
}
\end{equation}
\end{lemma}
\begin{proof}
The first claim \eqref{eqvert3} easily follows from the fully faithfulness of the embedding $I_{D_1^0,\dots,D_n^0}\to I_{D_1,\dots,D_n}$.
For the commutativity of \eqref{eqvert5}, note that the map $\Upsilon$ was defined via an iterative application of the associativity map $\alpha$. This $\alpha$ is a natural transformation, thus is functorial with respect to the dg functors (the morphisms in $\Cat_\dg(\k)$). 
It follows that $\Upsilon$ is functorial for the dg functor $I_{D_1^0,\dots,D_n^0}\to I_{D_1,\dots,D_n}$.
Then the commutativity of \eqref{eqvert4} follows from the commutativity of \eqref{eqvert5} and from \eqref{eqvert3}.
\end{proof}

Let $D_1,D_2,\dots,D_\ell$ be 2-globular pasting diagrams, each of which has $k+1$ vertices.
Denote by $D_1\circ \dots\circ D_\ell$ their ``vertical product'', which identifies the maximal element in $D_a(i,i+1)$ with minimal element in $D_{a+1}(i,i+1)$, where $0\le i\le k$, $1\le a\le \ell-1$. 
Thus, $D_1\circ\dots\circ D_\ell$ has $k+1$ vertices, and $(D_1\circ\dots\circ D_\ell)(i,i+1)$ has $\sum_{s=1}^\ell\sharp D_s(i,i+1)-\ell+1$ elements. 

Assume that $D_s=(n_{1s},\dots,n_{ks})$, $s=1,\dots,\ell$. Then 
$$
D_1\circ\dots\circ D_\ell=(\sum_{s=1}^\ell n_{1s},\dots,\sum_{s=1}^\ell n_{\ell s})
$$
Denote by $\min_i,\max_i$ the minimal and the maximal object of the image of embedding of $D_i$ to $D_1\circ\dots\circ D_\ell$.
Clearly $\min_1=\min$ and $\max_\ell=\max$.
One has the following composition
\begin{equation}\label{eqvertcn1}
\begin{aligned}
\ &\mathcal{O}({D_1})\otimes\dots\otimes \mathcal{O}(D_\ell)\simeq I_{D_1\circ\dots\circ D_\ell}(\min_\ell,\max_\ell)\otimes\dots\otimes 
I_{D_1\circ\dots\circ D_\ell}(\min_1,\max_1)\to\\
&I_{D_1\circ\dots\circ D_\ell}(\min,\max)=\mathcal{O}(D_1\circ\dots\circ D_\ell)
\end{aligned}
\end{equation}
where the first arrow is given by Lemma \ref{lemmann}, and the second one is the composition in $D_1\circ\dots\circ D_\ell$.

We denote the map \eqref{eqvertcn1} by $m_v$ (where the subscript $v$ stands for ``vertical''). 

Assume $D_1$ and $D_2$ have equal numbers of objects. we denote by $I_{D_1}\tilde{\circ}I_{D_2}$ their categorical cofibred sum (pushout) over the discrete category of objects, where the object $\max$ of $I_{D_1}$ is identified with the object $\min$ of $I_{D_2}$. We use similar notation for more irregular parenthesizing. For example, assume $D_1$ and $D_2$ have equal numbers of objects, as well as $D_3$ and $D_4$. Then we use notation $(I_{D_1}\sotimes I_{D_3})\tilde{\circ}(I_{D_2}\sotimes I_{D_4})$, etc.

We denote by $M_v$ the dg functor
$$
M_v\colon I_{D_1}\tilde{\circ}I_{D_2}\to I_{D_1\circ D_2}
$$
defined by the two embeddings $I_{D_1}\to I_{D_1\circ D_2}$ and $I_{D_2}\to I_{D_1\circ D_2}$. These two embeddings  clearly agree on the $\max\in I_{D_1}$ and $\min\in I_{D_2}$, and therefore define a map from the categorical pushout. 

The following lemma can be regarded as a sort of the Eckmann-Hilton compatibility. It plays an essential role in the proof of the associativity of the operadic composition in Section \ref{sectionopcomp}. 
\begin{lemma}\label{lemmann3}
Assume $D_1,D_2,D_3,D_4$ are 2-globular pasting diagrams such that the number of objects in $D_1$ is equal to the number of objects in $D_2$, as well as the numbers of objects in $D_3,D_4$ (so that $D_1\circ D_2$ and $D_3\circ D_4$ are defined). Denote by $D=[D_1,D_3]\circ [D_2,D_4]=
[D_1\circ D_2, D_3\circ D_4]$. The following diagram is commutative:
\begin{equation}\label{eqehop}
\xymatrix{
(I_{D_1}\sotimes I_{D_3})\tilde{\circ}(I_{D_2}\sotimes I_{D_4})\ar[rr]^{\hspace{4mm}\Upsilon\tilde{\circ} \Upsilon}\ar[d]_{M_v\sotimes M_v}&&I_{[D_1,D_3]}\tilde{\circ}I_{[D_2,D_4]}\ar[d]^{M_v}\\
I_{D_1\circ D_2}\sotimes I_{D_3\circ D_4}\ar[rr]^{\hspace{6mm}\Upsilon}&&I_{D}
}
\end{equation}
where the vertical maps $m_v$ are maps \eqref{eqvertcn1}, and the horizontal maps are $\Upsilon$ maps \eqref{mapup}. 
\end{lemma}
\begin{proof}
The diagrams
\begin{equation}
\xymatrix{
I_{D_1}\sotimes I_{D_3}\ar[r]\ar[d]&I_{[D_1,D_3]}\ar[d]\\
I_{D_1\circ D_2}\sotimes I_{D_3\circ D_4}\ar[r]&I_D
}
\end{equation}
and
\begin{equation}
\xymatrix{
I_{D_2}\sotimes I_{D_4}\ar[r]\ar[d]&I_{[D_2,D_4]}\ar[d]\\
I_{D_1\circ D_2}\sotimes I_{D_3\circ D_4}\ar[r]&I_D
}
\end{equation}
commute by Lemma \ref{lemmann2}, and agree on the discrete subcategories of minimal-maximal objects. Thus they define the commutative diagram on pushouts. 
\end{proof}

In what follows we refer to Lemma \ref{lemmann3} for a more general statement, with iterated horizontal and iterated vertical composition.

\subsection{\sc The 2-operadic composition}\label{sectionopcomp}

Now we are ready to define the 2-operadic composition \eqref{eqopcomp} on the 2-sequence $\{\mathcal{O}_D\}$.

We start with a simple example. 

Consider four 2-globular pasting diagrams $D_1,D_2,D_3,D_4$, such that $D_1$ and $D_2$, as well as $D_3$ and $D_4$ have equal numbers of objects. In particular, the compositions $D_1\circ D_2$ and $D_3\circ D_4$ are defined. Denote by $D$ the total diagram 
$D=[D_1\circ D_2, D_3\circ D_4]=[D_1,D_3]\circ [D_2,D_4]$, see Figure \ref{figureem}. Denote $D_0=(2,2)$.

\sevafigc{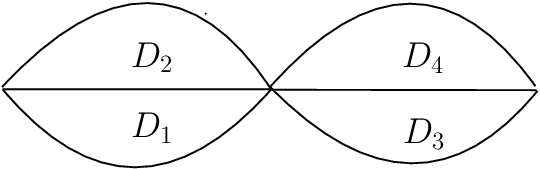}{70mm}{0}{\label{figureem}}

We define a composition
\begin{equation}
\mathcal{O}(D_0)\otimes\Big(\mathcal{O}(D_1)\otimes\mathcal{O}(D_2)\otimes\mathcal{O}(D_3)\otimes\mathcal{O}(D_4)\Big)\to\mathcal{O}(D)
\end{equation}

Assume we are given elements $\Psi_i\in\mathcal{O}(D_i)$, $1\le i\le 4$, and an element $\Theta\in \mathcal{O}(D_0)$.

We are going to define the composition $\Theta(\Psi_1,\Psi_2,\Psi_3,\Psi_4)$. 

Consider the dg category $I_2\sotimes I_2$. Denote by $e_1\in I_2(0,1)$ and $e_2\in I_2(1,2)$ the generators of the left copy of $I_2$, and by $e_3,e_4$ the corresponding generators of the right copy. Any morphism in $I_2\sotimes I_2((0,0),(2,2))$ is a sum of monomials, each of which contains each of $e_1,e_2,e_3,e_4$ exactly 1 time. 

For example, consider $(\id_2*e_4)\circ(\id_2*e_3)\circ (e_2*\id_0)\circ (e_1*\id_0)$, or $\varepsilon(e_2;e_4)\circ \varepsilon(e_1;e_3)$. The point is that an arbitrary monomial contains each $e_i$ exactly 1 time. 

The element $\Theta$ is a sum of such monomials, in a unique way. For simplicity, we may assume that $\Theta$ is a single monomial.
The idea is to substitute $\Psi_1$ for $e_1$, $\Psi_2$ for $e_2$, $\Psi_3$ for $e_3$, and $\Psi_4$ for $e_4$. 

This substitution does not give directly an element in $\mathcal{O}(D)$, due to a ``wrong'' parenthesizing. 
The element one obtains after the substitution belongs to $(I_{D_1\circ D_2}\sotimes I_{D_3\circ D_4})(\min,\max)$. 
Then we apply the map $\Upsilon$ to it, see \eqref{mapup}:
$$
I_{D_1\circ D_2}\sotimes I_{D_3\circ D_4}\xrightarrow{\Upsilon}I_{[D_1\circ D_2,D_3\circ D_4]}=I_D
$$
It gives an element in $I_D(\min,\max)$. 

 \vspace{3mm}

The general composition is similar to this example.\\
Let $P\colon U\to V$ be a map of 2-globular pasting diagrams. 
Recall that it is defined as a dominant map $[P]\colon [V] \to [U]$ of  the corresponding free 2-categories, see \eqref{eqopcomp} and Section \ref{sectionbcomp}. 
The image of $[P]$ gives a ``subdivision'' of $[U]$ into smaller diagrams. Namely, for each minimal ball $\nu$ in $[V]$, consider $P^{-1}(\nu):=[P](\nu)$. The 2-globular pasting diagram $U$ is divided into the union $U=\cup_{\nu\in \mathcal{F}(V)}P^{-1}(\nu)$.

Let $V=(m_1,\dots,m_p), U=(n_1,\dots,n_k)$, and 
let $[P](0)=d_0=0, [P](1)=d_1, [P](2)=d_2,\dots,[P](p)=d_p=k$. 

The set $\{\nu\in V(i,i+1)\}$ has a natural order, for a given $0\le i\le p-1$. Let 
$D_{i1}, \dots , D_{im_i}$ be the ordered set $\{P^{-1}(\nu)|\ \nu \in V(i,i+1)\}$, with the corresponding order. 
Finally, define 
\begin{equation}\label{dcirc}
\circD_i:=D_{i1}\circ \dots \circ D_{im_i}
\end{equation}
It is clear that
\begin{equation}\label{compnn2}
[\circD_1,\circD_2,\dots,\circD_p]=U
\end{equation}

We have to define an operadic composition
$$
\Op\colon \mathcal{O}(V)\otimes (\otimes_{\nu\in \mathcal{F}(V)}\mathcal{O}(P^{-1}(\nu)))\to\mathcal{O}(U) 
$$
where $\mathcal{F}(V)$ is the set of all 2-morphisms in $V$.

An element $\xi_\nu$  in $\mathcal{O}(P^{-1}(\nu)$ is given by a morphism in $I_{P^{-1}}(\nu)(\min,\max)$.

An element $\omega$ in $\mathcal{O}(V)$ is uniquely a sum of monomials, in each of which each indecomposable element in $e_{\nu}\in I_{m_j}(s,s+1)$, $1\le j\le p$, $0\le s\le m_j-1$, occurs exactly once.
We plug $\xi_\nu$ in place of $e_{\nu}$. What we get is an element in
$$
m(\omega,\{\xi_\nu\})\in \Big(I_{\circD_1}\sotimes\dots\sotimes I_{\circD_p}\Big)(\min,\max)
$$ 
due to Lemma \ref{lemmann2}.
Although $[\circD_1,\dots,\circD_p]=D$, the category $I_{\circD_1}\sotimes\dots\sotimes I_{\circD_p}$ differs from $I_D$, due to another parenthesizing. 
The element 
\begin{equation}\label{thecomposition}
\Upsilon(\circD_1,\dots,\circD_p)(m(\omega,\{\xi_\nu\}))=:\Op(\omega,\{\xi_\nu\})
\end{equation}
is, by definition, the result of our operadic composition (see \eqref{mapup}). Clearly it is linear in each argument.

\begin{theorem}
The operation $\Op$, defined in \eqref{thecomposition}, fulfils the identities (i)-(iii) in Definition \ref{def2op}. That is, it makes the 2-sequence $\mathcal{O}$ a dg pruned reduced 1-terminal  2-operad.
\end{theorem}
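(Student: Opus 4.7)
The plan is to verify the two axioms of Definition \ref{def2op} by decomposing the operadic composition \eqref{mexpl} into its two factors: the formal plug-in $\Op^\prime$, which is a purely combinatorial insertion of generators, and the rearrangement map $\Upsilon$, which is a composition of skew associativity constraints $\alpha$. Both axioms will then follow from the skew monoidal coherence theorem of Section \ref{sectionassocmain}, packaged as Proposition \ref{propups}.

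The unit axiom is essentially tautological. When the ``outer'' cochain is $\id\in\mathcal{O}(1)=\k[0]$ (or when all ``inner'' cochains at the minimal balls are the corresponding generating elementary morphisms), $\Op^\prime$ returns the original cochain back into $I_U(\min,\max)$ with the bracketing already matching, so $\Upsilon$ is the identity dg functor on the relevant object and the composition $\Op$ returns the input unchanged.

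The main work is the associativity axiom. Given three composable maps of 2-disks $W\xrightarrow{Q}V\xrightarrow{P}U$ (equivalently, a subdivision of the subdivision), the two compositions in the associativity diagram for a 2-operad both carry the outermost cochain $\phi\in\mathcal{O}(V)$ together with the inner cochains $\phi_\nu\in\mathcal{O}(P^{-1}(\nu))$ and the inmost cochains $\phi_{\nu,\mu}\in\mathcal{O}((P\circ Q)^{-1}(\nu,\mu))$ into $\mathcal{O}(W)=I_W(\min,\max)$. I would first observe that, on the level of the formal plug-in operation, the two orders of plugging agree strictly: plugging $\phi_{\nu,\mu}$ into the generator $t_{\nu,\mu}$ appearing in $\phi_\nu$ and then plugging the result into $t_\nu$ inside $\phi$ is, as a word in the generators of the relevant iterated twisted tensor product, the same word as plugging the $\phi_\nu$-expansion into $\phi$ first and then plugging $\phi_{\nu,\mu}$ into the remaining $t_{\nu,\mu}$'s. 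What differs between the two sides is only the bracketing of the ambient iterated $\sotimes$-product, which takes the form $I_{\mathbf{D}^1,\ldots,\mathbf{D}^k}$ in one case and is grouped differently in the other.

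The final and key step is to identify, after the combinatorial plug-in, the two remaining rearrangement maps with the two maps $\Upsilon_1,\Upsilon_2\colon I_{\mathbf{D}^1,\ldots,\mathbf{D}^k}\to I_{[[\mathbf{D}^k,\ldots,\mathbf{D}^1]]}$ of \eqref{ups1}--\eqref{ups2}. Once this identification is made, Proposition \ref{propups} asserts $\Upsilon_1=\Upsilon_2$, and evaluating at the element $\Op^\prime$ produces forces the 2-operadic associativity. The one genuine obstacle I anticipate is the bookkeeping of this identification: one has to match, for each composable pair of minimal balls $(\nu,\mu)$, the position of the corresponding $\phi_{\nu,\mu}$ inside one of the sub-sequences $\mathbf{D}^i$, and verify that the iterated application of associators in the two 2-operadic orders yields precisely the compositions $\Upsilon_1$ and $\Upsilon_2$ of Section \ref{section2opn1}. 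This is a careful but mechanical combinatorial check, and contains no new analytic or categorical input beyond Theorem \ref{theorskew} and Proposition \ref{propskew}; in particular, the perfectness of the skew monoidal structure is what allows us to bypass the extra idempotents $\varepsilon^\ell,\varepsilon^r,\varepsilon_0$ of \eqref{skewidem}, so that all coherence diagrams built from $\alpha$ commute on the nose, not merely up to those idempotents.
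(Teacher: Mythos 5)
Your proposal is correct and follows essentially the same route as the paper: the paper's proof likewise disposes of the unit identities as immediate and reduces the associativity axiom (ii) to the strict associativity of the plug-in operation $\Op^\prime$ together with the equality $\Upsilon_1=\Upsilon_2$ of Proposition \ref{propups}, which in turn rests on the perfect skew monoidal coherence of Theorem \ref{theorskew} and Proposition \ref{propskew}. Your elaboration of the bookkeeping (matching the two 2-operadic composition orders with the two bracketings $\Upsilon_1,\Upsilon_2$ of \eqref{ups1}--\eqref{ups2}) is exactly the content the paper leaves implicit.
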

\begin{proof}
(i) and (iii) in Definition \eqref{def2op} are clear. 

Prove the identity (ii). Consider two maps of 2-globular pasting diagrams $U\xrightarrow{P}V\xrightarrow{Q}W$, and prove the associativity for this chain. The maps above are defined via maps of strict 2-categories generated by the corresponding 2-globular sets 
$$
[W]\xrightarrow{Q}[V]\xrightarrow{P}[U]
$$
Let $W=(\ell_1,\dots,\ell_t)$, $V=(m_1,\dots,m_p)$, $U=(n_1,\dots,n_k)$.
We use notations $\circD_{1V},\dots,\circD_{tV}$ for the subdiagrams of $V$, associated with the map $Q$ as in  \eqref{dcirc}, and the notations $\circD_{1U},\dots, \circD_{pU}$ for the subdiagrams of $U$, similarly associated with the map $P$. We have 
$$
[\circD_{1V},\dots,\circD_{tV}]=V\text{   and   }[\circD_{1U},\dots,\circD_{pU}]=U
$$
We have also a subdivision of $U$ into bigger diagrams, associated with the composition $P\circ Q$. Assume that $\circD_{1V}$ has $a_1+1$ vertices, ..., $\circD_{tV}$ has $a_t+1$ vertices. Taking images with $P$, we get a subdivision 
$$
\begin{aligned}
\ &\circD_{1U+}=[\circD_{1U},\dots,\circD_{a_1,U}]\\
&\dots\\
&\circD_{tU+}=[\circD_{a_1+\dots,a_{t-1}+1,U},\dots,\circD_{pU}]
\end{aligned}
$$
where $\circD_{1U+},\dots,\circD_{tU+}$ are the diagrams associated with the composition $P\circ Q$, as in \eqref{dcirc}. Also,
$$
[\circD_{1UV},\dots,\circD_{tUV}]=U
$$
Denote
$$
\begin{aligned}
 &\mathbf{D}^\circ_{1U+}=(\circD_{1U},\dots,\circD_{a_1U})\\
 &\dots\\
 &\mathbf{D}^\circ_{tU+}=(\circD_{a_1+\dots,a_{t-1}+1,U},\dots,\circD_{pU})
 \end{aligned}
$$
Consider elements  $\omega\in \mathcal{O}(W)$, $\xi_\nu\in \{\mathcal{O}(Q^{-1}(\nu))\}_{\nu\in\mathcal{F}(W)}$, $\{\eta_\mu\in \mathcal{O}(P^{-1}(\mu))\}_{\mu\in\mathcal{F}(V)}$.

Now we construct the ``double substitution'', denoted by $m(\omega; \{\xi_\nu\},; \{\eta_\mu\})$, as follows. The element $\omega$ is uniquely a sum of monomials in the elementary generators $\{e_\nu\}_{\nu\in \mathcal{F}(W)}$. At first, we substitute the elements $\xi_\nu$ in place of the corresponding generators $e_\nu$.
What we get, is a morphism in $I_{D_{1V},\dots, D_{tV}}(\min,\max)$. Once again, any such morphism is uniquely a sum of monomials, each of which contains each elementary morphism $\{e_\mu\}_{\mu\in \mathcal{F}(V)}$ exactly 1 time. Then we plug the elements $\eta_\mu$ in place of the corresponding $e_\mu$. What we get is a morphism 
$$m(\omega;\{\xi_\nu\};\{\eta_\mu\})\in I_{\mathbf{D}^\circ_{1U+},\dots, \mathbf{D}^\circ_{tU+}}(\min,\max)$$ (We use implicitly Lemmas \ref{lemmann2} and \ref{lemmann3}).

There are two maps
\begin{equation}\label{operadprf2}
\Upsilon_1,\Upsilon_2: I_{\mathbf{D}^\circ_{1U+},\dots, \mathbf{D}^\circ_{tU+}}\to I_{[[\mathbf{D}^\circ_{1U+},\dots, \mathbf{D}^\circ_{tU+}]]}=I_U
\end{equation}
constructed in \eqref{ups1}, \eqref{ups2}. 

The application of $\Upsilon_1$ and $\Upsilon_2$ to $m(\omega;\{\xi_\nu\};\{\eta_\mu\})$ are identified with the two ways of the operadic compositions, correspondingly. The operadic associativity requires that the two ways are equal. 
It follows from Proposition \ref{propups} which states that $\Upsilon_1=\Upsilon_2$, as well as from the Eckmann-Hilton compatibility stated in Lemma \ref{lemmann3}. 
\end{proof}

\begin{remark}
{\rm
Note that the proof essentially relies on the coherence theorem (Proposition \ref{propskew}) for skew monoidal categories, which is used in the proof of identity $\Upsilon_1=\Upsilon_2$ (Proposition \ref{propups}). This coherence essentially relies on the fact that $\sotimes$ makes $\Cat_\dg(\k)$ a {\it perfect} skew-monoidal category (see Definition \ref{defskewperf}), as in the general case of a skew-monoidal category the coherence theorem is a more sophisticated statement (see [LS], [BL]).
}
\end{remark}

\subsection{\sc The 2-operad $\mathcal{O}$ is homotopically trivial}
Recall the map of complexes 
$$
p_{n_1,\dots,n_k}\colon \mathcal{O}(n_1,\dots,n_k)\to\k[0], n_1,\dots,n_k\ge 1, k\ge 1
$$
which is a quasi-isomorphism, see Proposition \ref{propcontr}.

This map comes from the corresponding quasi-equivalence of dg categories
$$
P_{n_1,\dots,n_k}\colon I_{n_k}\sotimes(I_{n_{k-1}}\sotimes(\dots\sotimes(I_{n_2}\sotimes I_{n_1})\dots))\to I_{n_k}\otimes I_{n_{k-1}}\otimes\dots \otimes I_{n_2}\otimes I_{n_1}
$$ 
which is the projection along the ideal generated by all $\varepsilon(f;\ g_1,\dots,g_n)$, $n\ge 1$. The map $p_{n_1,\dots,n_k}$
is then $P_{n_1,\dots,n_k}(\Hom(\min,\max))$.

\begin{prop}\label{prophomtriv}
The map $p\colon \mathcal{O}(-)\to \k[0]$ is compatible with the operadic composition, and thus gives rise to a map of operads $p\colon \mathcal{O}\to\mathbf{triv}$, where $\mathbf{triv}$ is the trivial 2-operad, $\mathbf{triv}(n_1,\dots,n_k)=\k$, and all operadic compositions are identity maps of $\k$. In other words, the 2-operad $\mathcal{O}$ is homotopically trivial.
\end{prop}
\begin{proof}
Each component of $\mathcal{O}(D)$ is $\mathbb{Z}_{\le 0}$-graded complex, and one sees directly from \eqref{thecomposition} that the operadic composition preserves this grading. \\
If a homogeneous element has a negative degree, it is a sum of monomials each of which contains at least one  $\varepsilon(-;-,..,-)$, and, thus, is mapped to 0 under the map $p$. On the other hand, $\mathbf{triv}(D)=\k$ contains only degree 0 elements for any $D$. 

It is enough to prove that on the degree 0 elements the map $p\colon \mathcal{O}(D)^0\to\mathbf{triv}(D)=\k$ agrees with the operadic composition. It is easy to describe the vector space $\mathcal{O}(D)^0$, $D=(n_1,\dots,n_k)$. It has a basis each element of which consists of the composition of elements $\id*\id*\dots*\id*e_{i,j}*\id*\dots*\id$ in some order, where $e_{i,j}$ runs through elementary morphisms $I_{n_i}(j,j+1)$. All such compositions are corresponded to $(n_1,\dots,n_k)$-shuffle permutations. 
Each basis element is clearly mapped to the only basis element $e$ in $(I_{n_1}\otimes\dots\otimes I_{n_k})(\min,\max)$.
On the other hand, operadic compositions $\Op(\omega; \{\xi_\nu\})$, in which $\omega$ and all $\xi_\nu$ are basis vectors of the type described above, is a basis vector once again. 

\end{proof}

\subsection{\sc The 2-operad $\mathcal{O}$ acts on $\Cat_\dg^\coh(\k)$}

Assume we are given dg categories $C_0,C_1,\dots,C_k\in\Cat_\dg(\k)$, and dg functors
\begin{equation}
\begin{aligned}
\ &F_{10},\dots, F_{1n_1}\colon C_0\to C_1\\
&F_{20},\dots, F_{2n_2}\colon C_1\to C_2\\
&\dots\\
&F_{k1},\dots,F_{kn_k}\colon C_{k-1}\to C_k
\end{aligned}
\end{equation}
(see Figure \ref{fig1}). Assume we are given coherent natural transformations
$$
\Psi_{ij}\colon F_{ij}\Rightarrow F_{i,j+1}\colon C_{i-1}\to C_1,\ \ i=1\dots k,\  j=0\dots {n_i-1}
$$
That is, $\Psi_{ij}\in\Coh_\dg(C_{i-1},C_i)(F_{ij},F_{i,j+1})$. 

Applying successively the twisted composition $M$, we get a dg functor
\begin{equation}\label{psitotbis}
\begin{aligned}
\ &M_\tot\colon Coh_\dg(C_{k-1},C_k)\sotimes \big(Coh_\dg(C_{k-2},C_{k-1})\sotimes\big(\dots\sotimes\big(\Coh_\dg(C_1,C_2)\sotimes \Coh_\dg(C_0,C_1)\big)\dots\big)\big)\\ &\to \Coh_\dg(C_0,C_k)
\end{aligned}
\end{equation}
Denote the l.h.s. of \eqref{psitotbis} by $\Coh_\dg(C_0,C_1,\dots,C_k)$. 
Then \eqref{psitotbis} sends
$$
M_\tot\colon \Coh_\dg(C_0,\dots,C_k)(F_{k0}\times\dots\times F_{10}, F_{kn_k}\times\dots\times F_{1n_1}) 
\to \Coh_\dg(C_0,C_k)(F_{k0}\circ\dots \circ F_{10}, F_{kn_k}\circ\dots\circ F_{1n_1})$$

The question is: how one can assign with the elements $\{\Psi_{ij}\}_{i=1\dots k, j=1\dots n_i}$ an element in the l.h.s. of \eqref{psitotbis}?

Denote $D=(n_1,\dots,n_k)$.

We associate to $\{\Psi_{ij}\}$ as above, and to an element $\omega\in \mathcal{O}(D)$, an element in 
$\Coh_\dg(C_0,\dots,C_k)(F_{k0}\times\dots\times F_{10}, F_{kn_k}\times\dots\times F_{1n_1})$, as follows.

Denote by $e_{ij}$ the generator in $I_{n_i}(j,j+1)$. The element $\omega$ is a sum of monomials in which $e_{ij}$ occurs exactly ones. We can plug $\Psi_{ij}$ for $e_{ij}$, it gives an element in $\Coh_\dg(C_0,C_k)(F_{k0}\circ\dots \circ F_{10}, F_{kn_k}\circ\dots\circ F_{1n_1})$. Denote this element by $m(\omega;\{\Psi_{ij}\})$. 

Define a map
\begin{equation}
\Theta(D)\colon \mathcal{O}(D)\to \underline{\Hom}\Big(\bigotimes_{i,j}\Coh_\dg(C_i,C_{i+1})(F_{ij},F_{i,j+1}),\Coh_\dg(F_{k0}\circ\dots \circ F_{10}, F_{kn_k}\circ\dots\circ F_{1n_1})\Big)
\end{equation}
as
\begin{equation}
\omega\otimes \bigotimes_{i,j}\Psi_{ij}\mapsto m(\omega;\{\Psi_{ij}\})
\end{equation}

\begin{theorem}
The maps $\{\Theta(D)\}$, for $D$ a 2-globular pasting diagram, give rise to an action of the dg 1-terminal 2-operad $\mathcal{O}$ on the dg 2-graph $\Cat_\dg^\coh(\k)$.
\end{theorem}
\begin{proof}
Both the operadic composition and the operadic action on the dg 2-graph $\Cat_\dg^\coh(\k)$ are defined via the substitutions, as well as via the maps $\Upsilon$, see \eqref{mapup}, and the twisted composition $M$. The statement that the operad $\mathcal{O}$ acts on $\Cat_\dg^\coh(\k)$ follows directly from the compatibility 
\eqref{eqcompskew1} of $M$ and the associativity map. The statement that this action is strict unital follows from \eqref{eqcompskew2} and \eqref{eqcompskew3}.
\end{proof}

\appendix

\section{\sc Proofs of Theorem \ref{theorassoc} and Theorem \ref{theorskew}}\label{approofs}
Here we provide  proofs of Theorems \ref{theorassoc} and \ref{theorskew}. 

\vspace{5mm}

{\it Proof of Theorem \ref{theorassoc}}:
It is clear that if $\alpha_{C,D,E}$ gives rise to a dg functor, this dg functor is unique. Indeed, we fixed its value on morphisms which generate $(C\sotimes D)\sotimes E$. In particular, relation $(R_4)$ in Section \ref{section111} implies that 
for any $\phi_1,\phi_2\in C\sotimes D$, $h_1,\dots,h_n\in E$, one has:
\begin{equation}\label{eqassoc2}
\alpha_{C,D,E}(\varepsilon(\phi_2\star\phi_1; h_1,\dots,h_n))=\sum_{0\le a\le n}\pm\alpha_{C,D,E}(\varepsilon(\phi_2; h_{a+1},\dots,h_n))\star \alpha_{C,D,E}(\varepsilon(\phi_1;h_1,\dots,h_a))
\end{equation}

To check that $\alpha_{C,D,E}$ gives rise to a dg functor, one needs to check the following things (where (i)-(iv) are as in the statement of Theorem \ref{theorassoc}):
\begin{itemize}
\item[(s1)]
the compatibility of $\alpha_{C,D,E}$ with the differentials, which include:
\begin{itemize}
\item[(s1.1)] 
\begin{equation}
\alpha_{C,D,E}((d\varepsilon(f; g_1,\dots,g_k))\star \id_Z)=d(\alpha_{C,D,E}(\varepsilon(f;g_k,\dots,g_k)\star \id_Z)
\end{equation}

\item[(s1.2)]
\begin{equation}
\alpha_{C,D,E}(d\varepsilon(f\star\id_Y;h_1,\dots,h_N))=d(\alpha_{C,D,E}(\varepsilon(f\star\id_Y;h_1,\dots,h_N))
\end{equation}
\item[(s1.3)]
\begin{equation}
\alpha_{C,D,E}(d\varepsilon(\id_X\star g;h_1,\dots,h_N)=d(\alpha_{C,D,E}(\id_X\star g;h_1,\dots,h_N))
\end{equation}
\item[(s1.4)]
\begin{equation}
\alpha_{CDE}\Big(d\varepsilon\big(\varepsilon(f;g_1,\dots,g_k);h_1,\dots,h_N\big)\Big)=
d\alpha_{CDE}\Big(\varepsilon\big(\varepsilon(g;g_1,\dots,g_k);h_1,\dots,h_N\big)\Big)
\end{equation}

\end{itemize}
\item[(s2)]
the two expressions for
$$
\alpha_{C,D,E}(\varepsilon(f_1f_2; g_1,\dots,g_k)\star\id_Z)
$$
among which the first one is given by (ii), and the second one is given through $(R_4)$ applied to $\varepsilon(f_1f_2; g_1,\dots,g_k)$ followed by (ii), give rise to equal expressions,
\item[(s3)] the two expressions for 
$$
\alpha_{C,D,E}(\varepsilon((f_1f_2)\star \id_Y;h_1,\dots,h_n))=\alpha_{C,D,E}(\varepsilon((f_1\star \id_Y)(f_2\star\id_Y); h_1,\dots,h_n))
$$
and for
$$
\alpha_{C,D,E}(\varepsilon(\id_X\star(g_1g_2);h_1,\dots,h_n))=\alpha_{C,D,E}(\varepsilon((\id_X\star g_1)(\id_X\star g_2);h_1,\dots,h_n))
$$
among which the first one is given by (iii), and the second one is given through $(R_4)$ followed by (iii), give rise to equal expressions,
\item[(s4)] the two expressions for
$$
\alpha_{C,D,E}(\varepsilon(\varepsilon(f_1f_2; g_1,\dots,g_k);h_1,\dots,h_N)
$$
among which the first one is given by (iv), and the second one is given through $(R_4)$ applied to $\varepsilon(f_1f_2; g_1,\dots,g_k)$ followed by (iv), give rise to equal expressions.
\end{itemize}

One checks (s1)-(s4) by a cumbersome but straightforward computation, and we omit the detail.

\qed

\vspace{5mm}

{\it Proof of Theorem \ref{theorskew}}:

Prove the commutativity of \eqref{eqskewmonn1}. We have to prove that the two maps from 
$$((X\sotimes Y)\sotimes Z)\sotimes W\to X\sotimes (Y\sotimes (Z\sotimes W))$$ from \eqref{eqskewmonn1} coincide. We start with the ``most non-degenerate'' case, and keep track of both maps on 
\begin{equation}\label{eqfu1}
\varepsilon\Big(\varepsilon\big(\varepsilon(f;g,...,g);h,h,...,h\big);s,s,...,s,s\Big)
\end{equation}
(To simplify notations, we drop the lower indices of the arguments; here $f\in X, g\in Y, h\in Z,s\in W$ are morphisms). We check the commutativity up to signs, the coincidence of signs is straightforward. 

The result of composition of upper two arrows in \eqref{eqskewmonn1} on \eqref{eqfu1} is a sum (with appropriate signs) of all possible terms of the form
\begin{equation}\label{eqfu2}
\varepsilon\Big(f;s,s,,...\varepsilon\big(g; s,s,,..,\varepsilon(h;s,s,..,s),s,s,...\big),s,..,\varepsilon(h;s,s,..,s),s,s,s,...,s\Big)
\end{equation}
Each such expression is of the form
\begin{equation}\label{eqfu2.1}
\varepsilon(f; S_1,\dots,S_M)
\end{equation}
where each $S_i$ is either $s$, or $\varepsilon(h;s,s,...,s)$, or $\varepsilon\big(g;s,s,..,s,\varepsilon(h;s,s,..,s),s,s,..,\varepsilon(h;s,s,..,s),s,s\dots\big)$. \\
\\
Let us keep track of the composition of three lower arrows in \eqref{eqskewmonn1}. The first arrow does not affect the $s$-arguments, and produces from \eqref{eqfu1} an expression of the form
\begin{equation}\label{eqfu3}
\varepsilon\Big(\varepsilon\big(f;h,h,...,h,\varepsilon(g;h,...,h),h,h,...,h,\varepsilon(g;h,...,h),...\big);s,s,s,...,s\Big)
\end{equation}
The next map in the lower composition produces 
\begin{equation}\label{eqfu4}
\varepsilon\Big(f;s,s..,s,\varepsilon(h; s,...s), s,s,s,\varepsilon(h; s,s,s,..s),s,s,..,\varepsilon\big(\varepsilon(g;h,h,...h);s,s,...s\big),s,...,s\Big)
\end{equation}
Finally, the third map in the lower composition acts only on the arguments of the form $\varepsilon\big(\varepsilon(g;h,h,...,h),s,s,...,ss\big)$ and maps them as in \eqref{eqassoc4bis}.

What we get finally is an expression of the form
\begin{equation}\label{eqfu4.1}
\varepsilon(f;T_1,T_2,..,T_N)
\end{equation}
where each $T_i$ is either $s$, or $\varepsilon(h;s,s,..,s)$, or $\varepsilon\big(g;s,..,s,\varepsilon(h;s,s,..,s),s,s,...,s,\varepsilon(h;s,s,..,s),s,...\big)$.

We see that there is a 1-to-1 correspondence between the terms in \eqref{eqfu2.1} and \eqref{eqfu4.1}, which proves \eqref{eqskewmonn1} on \eqref{eqfu1}. \\
\\
It remains to prove \eqref{eqskewmonn1} on expressions such as 
$\big((f\star \id)\star \id\big)\star \id$, or $\varepsilon(f;\varepsilon(g;h,\dots,h))\star \id$ (there is a rather long list of all possibilities). \eqref{eqskewmonn1} is straightforward on expressions which do not contain any $\varepsilon$ or contain exactly 1 $\varepsilon$. So we consider all possibilities which contain 2 of $\varepsilon$. Below is the exhaustive list (note that $\varepsilon(\id;...)$ and $\varepsilon(f; g,g,..,\id,g,g..)=0$ by ($R_3$) in Section \ref{section111}):
\begin{equation}\label{eqfu5}
\begin{aligned}
\ &\varepsilon\big(\varepsilon(f;g,g,\dots,g)\star\id_Z;s,s,\dots,s\big)\\
&\varepsilon\big(\varepsilon(f\star \id_Y;h,h,\dots,h);s,s,\dots,s\big)\\
&\varepsilon\big(\varepsilon(\id_X\star g; h,h,\dots,h); s,s,s,\dots,s\big)
\end{aligned}
\end{equation}
The proof of commutativity of \eqref{eqskewmonn1} for all of them is analogous, we provide a proof for the element in the first line of \eqref{eqfu5}, the others are left to the reader.\\
\\
The action of the first arrow in the composition of the two upper arrows of \eqref{eqskewmonn1} on $\varepsilon\big(\varepsilon(f;g,g,\dots,g)\star \id_Z;s,s,\dots,s\big)$ gives
\begin{equation}\label{eqfu6}
\varepsilon\big(\varepsilon(f; g,\dots,g);\id_Z\star s,\id_Z\star s,\dots,\id_Z\star s  \big)
\end{equation}
by \eqref{eqassoc1bisbis}. The application of the second upper map gives 
\begin{equation}\label{eqfu7}
\sum\pm \varepsilon\big(f; \id_Z\star s,\dots,\varepsilon(g; \id_Z\star s,\dots,\id_Z\star s), \id_Z\star s,\dots,\varepsilon(g; \id_Z\star s,\dots,\id_Z\star s),\dots\big)
\end{equation}
where the signs are as in \eqref{eqassoc4bis}.\\
Now keep track of application of the composition of the lower three arrows to the same element.
The application of the first lower arrow gives
\begin{equation}\label{eqfu8}
\varepsilon\big(\varepsilon(f; g\star\id_Z,\dots,g\star\id_Z); s,s,\dots,s\big)
\end{equation}
by \eqref{eqassoc1bis}. Next, the second lower arrow maps \eqref{eqfu8} to
\begin{equation}\label{eqfu9}
\sum\pm \varepsilon\big(\varepsilon(f; s,s,\dots, \varepsilon(g\star\id_Z; s,s,\dots,s),s,\dots,s,\varepsilon(g\star\id_Z;s,s,\dots,s),s,\dots\big)
\end{equation}
Finally, the third lower arrow maps it to 
\begin{equation}\label{eqfu10}
\sum\pm \varepsilon\big(\varepsilon(f; s,s,\dots, \varepsilon(g\star; \id_Z\star s,\id_Z\star s,\dots,\id_Z\star s),s,\dots,s,\varepsilon(g\star;\id_Z\star s,\id_Z\star s,\dots,\id_Z\star s),s,\dots\big)
\end{equation}
(here no new signs emerge, the signs at the corresponding terms are as for \eqref{eqfu9}). 

There is a minor difference between \eqref{eqfu7} and \eqref{eqfu9}; namely, some arguments $s$ in \eqref{eqfu10} emerge as $\id_Z\star s$ in \eqref{eqfu10}. However, in this context they are identically the same, due to Remark \ref{remfu}. \\
The commutativity of diagram \eqref{eqskewmonn1} is proven.\\
The commutativity of the remaining diagrams \eqref{eqskewmonn2}-\eqref{eqskewmonn5} is clear. 

\qed

\section{\sc Batanin 2-operads}\label{sectionba}
The theory of $n$-operads is due to Michael Batanin [Ba3-Ba5], see also [T2]. A contemporary approach uses operadic categories [BM1,2]. In this paper, we adopt the approach and terminology of [Sh5], Section 1.1, the reader is referred to. Here we very briefly outline some points. 
\subsection{}
The category $\Tree_n$ is defined as follows. Its object $T$ is an $n$-string of surjective maps in $\Delta$:
$$
T=[k_n-1]\xrightarrow{\rho_{n-1}} [k_{n-1}-1]\xrightarrow{\rho_{n-2}}\dots\xrightarrow{\rho_0}[0]
$$
Such $T$ is visualised as a $n$-level tree. An $n$-tree is called {\it pruned} if all $\rho_i$ are surjective. For a pruned $n$-tree, all its leaves are at the highest level $n$. The finite set of leaves of an $n$-tree $T$ is denoted by $|T|$.
The maps $\rho_i$ are referred to as the {\it structure maps} of an $n$-tree.

A morphism $F\colon T\to S$, where 
$$
S=[\ell_n-1]\xrightarrow{\xi_{n-1}} [\ell_{n-1}-1]\xrightarrow{\xi_{n-2}}\dots\xrightarrow{\xi_0}[0]
$$
is defined as a sequence of maps $f_i\colon \{k_i-1\}\to \{\ell_i-1\}$, $i=0,1,\dots,n$ (not monotonous, in general), which commute with the structure maps, and such that  
for each $0\le i\le n$ and each $j\in [k_{i-1}-1]$ the restriction of $f_{i}$ on $\rho_{i-1}^{-1}(j)$ is monotonous. That is, $f_i$ has to be monotonous when restricted to the fibers of the structure map $\rho_{i-1}$. 
It is clear that a map of $n$-trees is uniquely defined by the map $f_n$. Conversely, any map $f_n$ which is a map of {\it $n$-ordered sets}, associated with $n$-trees $S$ and $T$, defines a map of $n$-trees (see [Ba3, Lemma 2.3]). 

The fiber $F^{-1}(a)$ for a morphism $F\colon T\to S$, $a\in |S|$, is defined as the set-theoretical preimage of the linear subtree $\Out(a)$ of $S$ spanned by $a$. This linear subtree $\Out(a)$ is defined as follows. Let $a\in [\ell_i]$, then $\Out(a)$ has no vertices at levels $>i$, and the only vertex of $\Out(a)$ at level $j\le i$ is defined as $\rho_j\dots\rho_{i-2}\rho_{i-1}(a)$. Note that a fiber of a map of pruned $n$-trees is not necessarily a pruned $n$-tree, even if all components $\{f_i\}$ of the map $F$ are surjective, see [Sh5, Remark 1.2].

There are two {\it operadic categories} [BM1] based on $n$-trees as objects. The operadic category $\Tree_n$ has as objects all $n$-trees, and the fibers are defined as above. Another one, which we use in this paper, $\Ord_n$, has {\it pruned} $n$-trees as its objects, the morphisms are surjective on leaves, and the fibers are defined as {\it prunisation} of the naive fibers. This procedure of prunisation amounts to ignoring of all leaves at levels $<n$ and all its descendants. 

The ($n-1$-terminal) $n$-operads based on the operadic category $\Ord_n$ are used for describing (weak) $n$-categories {\it with strict units}.
Indeed, the strictness of units  means, for example for $n=2$, that the identity  2-morphisms give rise to identical vertical and horizontal (aka whiskering) compositions.

\subsection{}
\begin{defn}\label{def2op}{\rm
Let $V$ be a symmetric monoidal category. 
An assignment $T\rightsquigarrow \mathcal{O}(T)\in V$, for any pruned $n$-tree $T$, is called a (pruned) $n$-collection in $V$. 
A pruned reduced $(n-1)$-terminal $n$-operad $\mathcal{O}$ in a symmetric monoidal category $V$ is given by an $n$-collection $\{\mathcal{O}(T)\}_{T\in\mathbf{Ord}_n}$, so that for any {\it surjective} map $\sigma\colon T\to S$ of pruned $n$-trees, one is given the composition
\begin{equation}\label{eqopcomp}
m_\sigma\colon \mathcal{O}(S)\otimes \mathcal{O}(P(\sigma^{-1}(1)))\otimes\dots\otimes \mathcal{O}(P(\sigma^{-1}(k)))\to\mathcal{O}(T)
\end{equation}
where $k=|S|$ is the number of leaves of $S$, and $P(-)$ is the prunisation of the corresponding $n$-tree. It is subject to the following conditions (in which we assume that $V=C^\udot(\k)$ is the category of complexes of $\k$-vector spaces):
\begin{itemize}
\item[(i)] $\mathcal{O}(U_n)=\k$, and $1\in \k$ is the operadic unit,
\item[(ii)] the associativity for the composition of two surjective morphisms
$T\xrightarrow{\sigma}S\xrightarrow{\rho} Q$ of pruned $n$-trees, see [Ba2] Def. 5.1,
\item[(iii)] the two unit axioms, see [Ba2], Def. 5.1.
\end{itemize}

The category of pruned reduced $(n-1)$-terminal $n$-operads in a symmetric monoidal category $V$ is denoted by $\Op_n(V)$, or simply by $\Op_n$. 
}
\end{defn}

The $(n-1)$-terminality makes us possible to restrict with $n$-operads taking values in a symmetric monoidal globular category $\Sigma^nV$, where $V$ is a closed symmetric monoidal category, see [Ba2], Sect. 5. By a slight abuse of terminology, we say that an operad takes values in the closed symmetric monoidal category $V$ (not indicating $\Sigma^nV$).

For $n=1$ we recover the concept of a {\it non-symmetric} (non-$\Sigma$) operads. 
To describe $n$-algebras for $n>1$, or $n$-categories, one needs some partial symmetry, but less than the entire symmetric groups $\Sigma_n$ actions on the components $\mathcal{O}(n)$ of an ordinary (symmetric) operads. Morally, with $n$-operads we restrict ourselves with minimal possible symmetry. 

One essential difference with the $n=1$ is that for $n>1$ the corresponding {\it composition product} on the category of (pruned reduced $(n-1)$-terminal) $n$-sequences gives rise only to a {\it skew-monoidal} $V$-category, not to a monoidal one. This phenomenon leads, in particular, to {\it non-associative} graphical presentation of free $n$-operads. 
We refer the reader to [L], Theorem 3.4 where a more general result that collections based on any operadic categories form a skew-monoidal category is proven.

\subsection{}\label{sectionbcomp}
Following [T2], we use in this paper the Joyal dual description of the category $\Trees_n$ via Joyal $n$-disks (more precisely, via {\it $n$-globular pasting diagrams}). This description is more ``globular'', and it fits better for questions of globular nature, such as for describing weak $n$-categories. Let us recall it, for simplicity restricting ourselves with the case $n=2$.

A 2-globular diagram $D$ is given by  sets, $D_0, D_1, D_2$, and the following maps
\begin{equation}\label{globpres}
D_2\doublerightarrow{s_1}{t_1}D_1\doublerightarrow{s_0}{t_0}D_0
\end{equation}
such that $s_0s_1=s_0t_1$, $t_0s_1=t_0s_1$. A morphism $D\to D^\prime$ of two 2-globular diagrams is defined as collection of maps $\{D_i\to D_i^\prime\}_{i=0,1,2}$, which commute with all $s_j$ and $t_j$.

We consider globular $n$-diagrams for which the set $D_0$ is linearly ordered such that for any $f\in D_1$ the element $t_0(f)$ is the least element greater than $s_0(f)$, and similarly $D_1$ is a disjoint union of linearly ordered sets such that for any $\nu\in D_2$ the elements $s_1(\nu), t_1(\nu)\in D_1$ belong to the same connected component, and $t_1(\nu)$ is the least element greater than $s_1(\nu)$. Such diagrams are called  {\it globular pasting $2$-diagram}, see Figure \ref{fig1}.

A 2-globular pasting diagram $D$ is considered as a quiver for generating a free strict 2-category, which we denote by $\omega_2(D)$. The functor $D\rightsquigarrow\omega_2(D)$ is the left adjoint to the forgetful functor from the category of strict 2-categories to 2-globular diagrams.

Assume we are given a globular diagram as at the Figure \ref{fig1}, then one associates to it the 2-tree 
$$T(D)= [n_1+\dots+n_k-1]\xrightarrow{\rho} [k-1]\to [0]$$
where $\sharp\{\rho^{-1}(i)\}=n_i$ and the map $\rho$ is monotonous surjective. The $n_1, n_2,\dots,n_k$ leaves at the corresponding branches of the tree $T(D)$ are thought of as the elementary generating 2-morphisms (thus, the vertical intervals) in $D$.
We denote such globular diagram by $D=(n_1,\dots,n_k)$. As we model the category of pruned 2-trees, we impose the conditions
$$
k\ge 1, n_i\ge 1 \text{  for } 1\le i \le k 
$$

We define a category whose objects are 2-globular diagrams. We define a map $f\colon D\to D^\prime$ as a strict 2-functor between the strict 2-categories $f\colon \omega_2(D)\to \omega_2(D^\prime)$ which is {\it dominant} in the following sense. The objects of $\omega_2(D)$ are linearly ordered, and for any two objects $i\le j$, the set of 1-morphisms $\omega_2(D)(i,j)$ is partially ordered, with the minimal and the maximal elements. The dominance of $f$ means that $\omega_2)(f)$ preserves the minimal and the maximal vertices, and for any $i\le j\in \omega_2(D)$, $\omega_2(f)$ maps the minimal element of $\omega_2(D)(i,j)$ to the minimal element in $\omega_2(D^\prime)(f(i),f(j))$, and similarly for the maximal elements. We denote the category whose objects are 2-globular diagrams, and morphism are strict dominant 2-functors as above, by $\Glob_2^\dom$. For $D\in \Glob_2^\dom$, $D=(n_1,\dots,n_k)$, we assume that $k\ge 1$, $n_1,\dots,n_k\ge 1$. 

One has:
\begin{prop}\label{propduality}
The category $\Glob_2^\dom$ is anti-equivalent to the category $\Tree_2$ of pruned 2-trees.
\end{prop}
See [B], Prop. 2.2 (along with [Sh5], Rem. 1.6 and [T2], 4.1.9-10 for a proof.

The duality of Proposition \ref{propduality} is the $n=2$ analogue of the classical Joyal duality between the category $\Delta_+$ (the usual category $\Delta$ augmented with an initial object [-1]) and the category $\Delta_{fi}$ of {\it finite intervals}, see [J]. 

We adopt the following notations for the 2-globular pasting diagram presentation of 2-operads:

$U,V,...$ for 2-globular pasting diagrams, $[U]$ for $\omega_2(U)$. A {\it minimal ball} in $U$ is an elementary 2-morphism of $\omega_2(U)=[U]$, that is, an element of $U_2$ in the presentation \eqref{globpres}. Minimal balls are denoted by $\mu,\nu,\dots$. The set of all minimal balls in a 2-globular diagram $U$ is denoted by $\mathcal{F}(U)$. 

Denote by $\mathcal{C}(U)$ the set of {\it intervals} of $U$, that is, the set $U_1$ in the presentation \eqref{globpres}. (It is similar to the ordered set of intervals in an ordinal; for the ordinal $[k]=\{0<1<2<\dots<k\}$, the cardinality of the set of intervals is $k$, and the intervals are $(01), (12),\dots, (k-1,k)$. We sometimes write $\overrightarrow{i,i+1}$ for the interval $(i,i+1)$.

For a 2-globular set $U$, there is a map 
$$
\pi(U)\colon \mathcal{F}(U)\to\mathcal{C}(U)
$$
defined as $\pi_U(\nu)=(i,i+1)$ if $s_1(\nu)=i, t_1(\nu)=i+1$.

\subsection{\sc Batanin Theorem}\label{sectionbtheoremm}
Denote the category of symmetric operads (in a given symmetric monoidal category) by $\Op_\Sigma$.

Batanin [Ba2], Sect. 6 and 8, constructs a pair of functors relating symmetric operads and $n$-operads:

$$
\Symm\colon \Op_n^{n-1}\rightleftarrows   \Op_\Sigma \colon \Des
$$
The right adjoint functor of desymmetrisation $\Des$ associates to each pruned $n$-tree $T$ its set of leaves $|T|$ (which are all at the level $n$):
$$
\Des(\mathcal{O})(T)=\mathcal{O}(|T|)
$$
and for a map $\sigma\colon T\to S$ of $n$-trees, the $n$-operadic composition associated with $\sigma$ is defined as the corresponding composition for $|\sigma|=|\sigma_n|\colon |T|\to |S|$, twisted by the shuffle permutation $\pi(\sigma_n)$ of the map $|\sigma_n|\colon |T|\to |S|$ defined by the condition that the composition of $\pi(\sigma)$ followed by an order preserving map of finite sets is $\sigma_n$ (see [Ba2], Sect. 6). 

The symmetrisation functor is defined as the left adjoint to $\Des$, its existence is established in [Ba2], Sect. 8.

The main result on $n$-operads was proven in [Ba3] Th.8.6 for topological spaces and in [Ba3] Th.8.7 for complexes of vector spaces. We provide below the statement for $C^\udot(\k)$, as the one we use here.
Denote by $\underline{\k}$ the constant $n$-operad, $\underline{\k}(T)=\k$, with evident operadic compositions. We say that an $n$-operad in $C^\udot(\k)$ is {\it augmented} by $\underline{\k}$ if there is a map of $n$-operads $p\colon \mathcal{O}\to\underline{\k}$, called the augmentation map. 

\begin{theorem}\label{theoremm}{\rm [Batanin]}
Let $\mathcal{O}$ be reduced pruned $(n-1)$-terminal $n$ operad in the symmetric monoidal category $C^\udot(\k)$. 
Assume $\mathcal{O}$ is augmented to the constant $n$-operad $\underline{\k}$, and that for any arity $T$ the augmentation map $p(T)\colon \mathcal{O}(T)\to \k$ is a quasi-isomorphis of complexes. 
Then there is a morphism of $\Sigma$-operads $C_\udot(E_n;\k)\to\Sym(\mathcal{O})$, thus making any $\mathcal{O}$-algebra a $C_\udot(E_n;\k)$-algebra.
\end{theorem}

\begin{remark}{\rm
There are closed model structures on the categories of $\Sigma$-operads and $n$-operads, constructed in [BB2]. Within these model structures, $(\Symm, \Des)$ is a Quillen pair, with $\Symm$ the left adjoint. The stronger version of this theorem [Ba3] actually says that the symmetrisation of a {\it cofibrant} contractible pruned, reduced, $(n-1)$-terminal is weakly equivalent to the symmetric operad $C^\udot(E_n;\k)$. 
}
\end{remark}

\comment
Here we briefly recall the definition of a Batanin 2-operad and an algebra over it [Ba3-Ba5], [T2], [BM1].

\subsection{\sc Ordinary operads}\label{sectionba1}
We assume the reader has some familiarity with ordinary operads.

Recall the basic definitions of the theory of {\it non-symmetric} operads.

Let $\mathscr{M}$ be a symmetric monoidal category. 
Assume $\mathscr{M}$ has coproducts which are preserved by the monoidal product. Denote by $0$ the initial object in $\mathscr{M}$, and by $e$ the unit object. Consider the category $\mathscr{M}_\coll$ of {\it collections} in $\mathscr{M}$. Its objects are sequence of objects $X_0,X_1,X_2,\dots\in\mathscr{M}$, its morphisms are ``level-wise'' morphisms. 

The category $\mathscr{M}_\coll$ is monoidal, with the monoidal product
\begin{equation}
(X\otimes Y)_n=\coprod_{k\ge 0}\coprod_{\substack{{j_1+\dots+j_k=n}\\{j_s\ge 0}}}X_{j_1}\otimes\dots\otimes X_{j_k}\otimes Y_k
\end{equation}
Its unit is the collection $(0,e,0,0,\dots)$.
An {\it non-symmetric operad} in $\mathscr{M}$ is a monoid in $\mathscr{M}_\coll$. 

The operads are designed for considering the algebras over them. Let $\mathcal{O}$ be an operad in $\mathscr{M}$. Assume $\mathscr{M}$ is closed, denote by $\underline{\Hom}$ the inner $\Hom$ in $\mathscr{M}$ right adjoint to the monoidal product. ``Very classically'', an algebra over $\mathcal{O}$ is an object $X$ of $\mathscr{M}$ such that the collection $\End(X)$, defined as
$$
\End(X)_n=\underline{\Hom}(X^{\otimes n},X)
$$
is an algebra over the monoid $\mathcal{O}$ in $\mathscr{M}_\coll$. That is, there are maps in $\mathscr{M}$
$$
\mathcal{O}_n\to\underline{\Hom}(X^{\otimes n},X), \ n\ge 0
$$
compatible with the operadic compositions. 

``Less classically'', an algebra over $\mathcal{O}$ may me an object of an $\mathscr{M}$-enriched category $\mathbf{K}$. Let $\mathbf{K}$ be a $\mathscr{M}$-enriched category, $X\in \mathbf{K}$ an object. Define the collection
$\mathbf{End}(X)\in\mathscr{M}_\coll$ by
$$
\mathbf{End}(X)_n=\mathbf{Hom}(X^{\otimes n},X)
$$
where we denote by $\mathbf{Hom}(-,-)$ the $\mathscr{M}$-valued $\Hom$ in $\mathbf{K}$. 

{\it An algebra over the operad $\mathcal{O}$} is an object $X\in\mathbf{K}$ such that the collection $\mathbf{End}(X)$ is an algebra over the monoid $\mathcal{O}$. 

This extension of the classical concept of an algebra over an operad as an object in an $\mathscr{M}$-enriched category $\mathbf{K}$ becomes very fruitful in Batanin's definition of higher operads [Ba3].

\subsection{\sc 2-{\bf disk}s}\label{sectionba2}
A {\it 2-{\bf disk}} (see Figure \ref{fig1}) is a finite connected graph $U$ whose vertices are totally ordered and labelled $0,1,\dots,k$,  such that the set of arrows $U(i,j)$ is empty unless $j\ne  i, i+1$, $U(i,i)=\{\id\}$, and
the set of arrows arrows $U(i,i+1)$, $0\le i\le k-1$ is totally ordered. 
\sevafigc{pic1.eps}{100mm}{0}{The 2-{\bf disk} $D=(n_1,\dots,n_k)$\label{fig1}}
Denote $n_{i+1}+1:=\sharp U(i,i+1)$, $1\le i+1\le k$. Then we often write
\begin{equation}\label{2ordform}
U=(n_1,n_2,\dots,n_k)
\end{equation}

We require:
\begin{equation}\label{sizes}
k\ge 1, \text{  and  }n_i\ge 0 \text{  for any  }1\le  i\le k
\end{equation}
A 2-{\bf disk} $U$ is uniquely defined by the ordered set $\{n_1,\dots,n_k\}$.

A 2-{\bf disk} is called {\it non-degenerate} if either $k=1, n_1=0$, or all $n_i\ge 1$. The 2-{\bf disk} with $k=1,n_1=0$ is called $\mathbf{dgnt}$, see Figure \ref{fig6new} (right). 

A 2-{\bf disk} $U$ generates a strict 2-category $[U]$,  as follows. Its objects are $0,1,\dots,k$, a 1-arrow $[U](i,j)$, $j\ge i$, is a sequence $\{f_\ell\in U(\ell,\ell+1)\}_{i\le \ell\le j-1}$ (we denote this 1-morphism as $(f_{j-1}\dots f_i)$), and for two 1-morphisms $f=(f_{j-1}\dots f_i)$ and $h=(h_{j-1}\dots h_i)$ there is a unique 2-morphism $f\Rightarrow h$ iff $f_\ell\le h_\ell$ for any $\ell=i\dots j-1$.

A morphism of 2-{\bf disk}s $P\colon U\to V$ is defined as a 2-functor $[P]\colon [V]\to [U]$ such that:
\begin{itemize}
\item[(i)] the map $[P]$ is monotonous and dominant (that is, $[P](\min)=\min$ and $[P](\max)=\max$) on vertices,
\item[(ii)] for any $i<j$, the induced map of 1-categories $[P]_{i,j}\colon [V](i,j)\to [U](P(i),P(j))$ preserves the minima and the maxima. 
\end{itemize}
The 2-{\bf disk}s form a category denoted by $\Disk_2$.

A morphism of 2-{\bf disk}s $P\colon U\to V$ is called {\it non-degenerate} if $[P]$ is injective on vertices, and is injective on $[P](i,j)$ for any $i<j$. 

The category of 2-{\bf disk}s is dual to the category of pruned 2-level graphs and {\it all} maps between them, see [B, Prop. 2.2], which can be considered as $n=2$ case of the Joyal duality. 

A morphism of 2-{\bf disk}s is called {\it surjective}, if the Joyal dual map of 2-ordinals is injective on the sets of leaves (to translate this definition on the level of {\bf disk}s is less straightforward). 

There is a special 2-{\bf disk} called {\it the globe}, it is the 2-{\bf disk} with $k=1$, $n_1=1$, see Figure \ref{fig6new}. We denote it by $\mathbf{globe}$. It is a final object in the category $\Disk_2$. The category $\Disk_2$ has an initial object: it is the 2-{\bf disk} $\mathbf{dgnt}$, it corresponds to the case $k=1$, $n_1=0$, see Figure \ref{fig6new}).

\sevafigc{pic6.eps}{80mm}{0}{The globe 2-{\bf disk} (left) and the degenerate 2-{\bf disk} (right).\label{fig6new}}

Finally, define a {\it minimal ball} $\nu$ in a 2-{\bf disk} $U$. It is a 2-subcategory in $[U]$, with a single 2-morphism $\nu$, of the form
\begin{equation}\label{eqglobe}
\begin{aligned}
\ &\nu\colon c_j\Rightarrow c_{j+1}\colon i\to i+1\\
&\text{$c_{j+1}$ is the least element greater than $c_j$ in the totally ordered set $[U](i,i+1)$}
\end{aligned}
\end{equation}
As a 2-{\bf disk}, it is isomorphic to the globe, see Figure \ref{fig7new}.

\sevafigc{pic7.eps}{100mm}{0}{A minimal ball in a 2-{\bf disk}.\label{fig7new}}

 By a (not necessarily minimal) ball in $U$ we mean the 2-subcategory of $[U]$ which is formed by all 1- and 2-morphisms  which are ``inside'' of the image of any 2-functor $\mathbf{globe}\to [U]$.
 
We use the notation $\mathcal{F}(U)$ for the set of all {\it minimal balls} of a 2-{\bf disk} $U$.

With a 2-{\bf disk} $U=(n_1,\dots, n_k)$ is associated the {\it finite interval} $[k]$ of its vertices, we denote by $\mathcal{C}(U)$ the set of its elementary intervals $\overrightarrow{1},\overrightarrow{2},\dots,\overrightarrow{k}$ (that is, $\overrightarrow{\ell}=[\ell-1,\ell]$).  Denote by 
$$
\pi_U\colon \mathcal{F}(U)\to \mathcal{C}(U)
$$
the map assigning $\overrightarrow{\ell}$ to a minimal ball $\nu$ if $\nu\in U(\ell-1,\ell)$. All minimal balls $\nu\in\mathcal{F}(U)$ for which $\pi_U(\nu)=\ell$ for the $\ell$-th column of the 2-{\bf disk}. It has a natural structure of a 1-ordinal. 

\begin{remark}{\rm
The category of 2-{\bf disk}s is isomorphic to the dual of the category of 2-ordinals used by Batanin in [Ba3]. The 2-ordinals are visualized as levelled trees. This duality is the 2-dimensional case of the Joyal duality $\Delta_0\simeq (\Delta_+)^\op$, where $\Delta_+$ is the extension of the simplicial category $\Delta$ by the empty ordinal (which is defined as the initial object in $\Delta_+$), and $\Delta_0$ is\footnote{The notation $\Delta_0$ is not conventional, but it has been used by the author.} the subcategory of $\Delta$ having the same objects as $\Delta$ (that is, all finite {\it non-empty} ordinals), and the morphisms of $\Delta_0$ are those morphisms of $\Delta$ which preserve the minima and the maxima of the ordinals. 

The reader is referred to [J] for this duality and for its higher dimensional versions. The 1-dimensional case can be visualized as follows. Let $[n]=\{0<1<\dots<n\}$ be an object of $\Delta_0$. The functor $\Delta_0\to (\Delta_+)^\op$ sends $[n]$ to $[n-1]$. We interpret $[n-1]$ as the ordered set of elementary intervals $[i,i+1]$ in $[n]$, there are $n$ of them (when $n=0$, $[0]$ is mapped to $\emptyset$).

Let $\phi\colon [m]\to [n]\in \Delta_0$. Then $\phi$ defines a map $\phi^*\colon [n-1]\to [m-1]$ on the ordered sets of elementary intervals, changing the direction. One sets:
$$
\phi^*([j,j+1])=[a,a+1]
$$ 
where $a+1$ is the maximal element in $[m]$ such that $\phi(a+1)=j+1$.
}
\end{remark}

\subsection{\sc 2-operads}\label{sectionba3}

In what follows, we recall some of definitions given in [Ba3] in a very specific situation. We consider the case $n=2$ only, our 2-operads are {\it 1-terminal}, and their top level is enriched over the $\Vect_\dg(\k)$.

A {\it 2-sequence} in $\Vect_\dg(\k)$ is a sequence $X(n_1,\dots,n_k)$ of elements in $\Vect_\dg(\k)$, labelled by 2-{\bf disk}s. 
The 2-sequences in $\Vect_\dg(\k)$ form a category, where a map $\phi\colon X\to Y$ is a sequence of maps of complexes $\phi(U)\colon X(U)\to Y(U)$, for the 2-{\bf disk}s $U$.

A {\it reduced unital 2-sequence} $X$ in $\Vect(\k)$ is a sequence $X(n_1,\dots,n_k)$ defined only on non-degenerate {\bf disk}s (that is, either $k=1, n_1=0$, or all $n_i\ge 1$), which extends (and an extension is given) to a presheaf on the category on non-degenerate {\bf disk}s and their surjective maps, and such that $X(\mathbf{dgnt})=X(\mathbf{globe})=\k$.

\begin{remark}
{\rm 
Batanin considers the presheaf on surjective maps of {\bf disk}s as a way to introduce units in $1\in X(\mathbf{dgnt})$, but considering only non-degenerate diagrams (except for $\mathbf{dgnt}$). It is important to require that the whiskering with this 1 preserves components of $X$. Morally, some fibres of a surjective map of {\bf disk}s (=pre-images of some globes) may be degenerate {\bf disk}s$\ne \mathbf{dgnt}$, which we want to exclude from data for $X$. 
}
\end{remark}

\begin{example}{\rm To illustrate what this definition is served for, consider as an example the following injective (and bijective on leaves) map of 2-label trees, see Figure \ref{fig9x}(left). (There are two of such maps, as the map on tips can be either $(1,2)$ or $(2,1)$; we consider the first of them).
\sevafigc{pic9.eps}{120mm}{0}{An injective map of 2-level trees and its Joyal dual map of 2-{\bf disk}s \label{fig9x}}

The map on {\bf disk}s is defined as the map of generated strict 2-categories, in backward direction. 
In Figure \ref{fig9x}(right) it is shown a map of Joyal dual {\bf disk}s. In Figure \ref{fig10x}, we show the two fibres, on the level of trees (left) and on the level of {\bf disk}s (right). 
\sevafigc{fig10x.eps}{120mm}{0}{The fibres of the map of trees (left) and of the Joyal dual map of {\bf disk}s (right)\label{fig10x}}
We see that degenerate trees (resp., {\bf disk}s) appear, which are interpreted as  horizontal products with $1\in X(\mathbf{dgnt})$, that is, the whiskering compositions. }
\end{example}

\vspace{2mm}

The category of reduced unital 2-sequences in $\Vect_\dg(\k)$ forms a {\it skew }monoidal category whose skew monoidal product we are going to consider, see [L, Sect.3] for detail. 

Introduce some notation. Let $U,V$ be 2-{\bf disk}s, and let $P\colon U\to V$ be a map of 2-{\bf disk}s, given as a map $[P]\colon [V]\to [U]$ of 2-categories. Let $\nu$ be a minimal ball in $V$. Its image $[P](\nu)$ is a ball in $U$. It is denoted by $P^{-1}(\nu)$. Clearly it is a 2-{\bf disk}.

Let $X,Y$ be two 2-sequences. Define new 2-sequence $X\circ Y$ as
\begin{equation}
(X\circ Y)(U)=\bigoplus_{P\colon U\to V}X(V)\otimes \big(\bigotimes_{\nu\in\mathcal{F}(V)}Y(P^{-1}(\nu)\big)
\end{equation}
Define a 2-sequence $E$ as
\begin{equation}
E(\mathbf{dgnt})=E(\mathbf{globe})=\k, \ E(n_1,\dots,n_k)=0 \ \text{for  }(n_1,\dots,n_k)\ne \mathbf{dgnt},\mathbf{globe}
\end{equation}
One has:
\begin{lemma}\label{lemmamon1}
The product $X\circ Y$ endows all reduced unital 2-sequences in $\Vect_\dg(\k)$ with a skew monoidal structure. Its skew unit is the 2-sequence $E$.
\end{lemma}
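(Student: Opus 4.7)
The plan is to verify the monoidal-category axioms for $(X, Y) \mapsto X \circ Y$ with unit $E$ directly, by unpacking each side and producing natural bijections on the indexing sets of 2-disk maps. Bifunctoriality of $\circ$ is immediate from the formula, since it is built from $\otimes$ and coproducts applied pointwise on a discrete index set, both of which are functorial.

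For associativity, I would expand
\begin{align*}
((X \circ Y) \circ Z)(U) &= \bigoplus_{Q\colon U \to W,\ P\colon W \to V} X(V) \otimes \bigotimes_{\nu \in \mathcal{F}(V)} Y(P^{-1}(\nu)) \otimes \bigotimes_{\mu \in \mathcal{F}(W)} Z(Q^{-1}(\mu)), \\
(X \circ (Y \circ Z))(U) &= \bigoplus_{R\colon U \to V} X(V) \otimes \bigotimes_{\nu \in \mathcal{F}(V)} \Bigl[\, \bigoplus_{S_\nu\colon R^{-1}(\nu) \to T_\nu} Y(T_\nu) \otimes \bigotimes_{\mu \in \mathcal{F}(T_\nu)} Z(S_\nu^{-1}(\mu)) \,\Bigr],
\end{align*}
and then construct the associator from the bijection $(Q, P) \leftrightarrow (R, \{T_\nu, S_\nu\}_{\nu \in \mathcal{F}(V)})$ in which $R := P \circ Q$, $T_\nu := P^{-1}(\nu)$, and $S_\nu$ is the restriction of $Q$ to the subdisk $R^{-1}(\nu) \subset U$ with codomain $T_\nu$; the inverse glues the $T_\nu$'s along the template of $V$ to recover $W$ and reassembles the $S_\nu$'s into $Q$. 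This rests on two combinatorial facts I would record first: (a) for $P\colon W \to V$, the preimages of minimal balls partition the minimal balls of $W$, i.e.\ $\mathcal{F}(W) = \bigsqcup_{\nu \in \mathcal{F}(V)} \mathcal{F}(P^{-1}(\nu))$; and (b) pullback is compatible with composition, so $(P \circ Q)^{-1}(\nu) = Q^{-1}(P^{-1}(\nu))$ as subdisks, with the matching statement for minimal balls downstairs.

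For the units, in $(E \circ X)(U) = \bigoplus_{P\colon U \to V} E(V) \otimes \bigotimes_{\nu} X(P^{-1}(\nu))$ only $V = \mathbf{globe}$ contributes; the universal property of $\mathbf{globe}$ as a terminal object in $\Disk_2^\inj$ provides a unique $P$, and the product over $\mathcal{F}(\mathbf{globe})$ with its preimage collapses to the single factor $X(U)$. Dually, in $(X \circ E)(U)$ every $P^{-1}(\nu)$ must be a globe, which forces $V = U$ and $P = \id_U$, so the sum again collapses to $X(U)$. The pentagon and triangle coherences then reduce, via the same bijections together with the strict associativity and unitality of composition in $\Disk_2^\inj$, to equalities at the level of indexing data, and hence hold on the nose; no nontrivial coherence data needs to be supplied.

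The main obstacle I anticipate is the combinatorial fact (a), since it requires a careful analysis of what a map of 2-disks does to minimal 2-morphisms: one must show that the preimage $P^{-1}(\nu)$ is itself a well-formed 2-disk (and not some more complicated 2-subcategory), that every minimal 2-morphism of $W$ lies in exactly one such preimage, and that the minimal balls internal to $P^{-1}(\nu)$ coincide with those minimal balls of $W$ whose underlying 2-morphism is sent into $\nu$. Once this combinatorial picture is in place, everything else is bookkeeping.
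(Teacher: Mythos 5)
The paper states this lemma without proof, so there is no argument of the author's to compare against; your proposal is the standard verification for a substitution product on collections and is essentially correct. The two combinatorial facts you isolate --- that $\mathcal{F}(W)=\bigsqcup_{\nu\in\mathcal{F}(V)}\mathcal{F}(P^{-1}(\nu))$ for a map $P\colon W\to V$, and that $(P\circ Q)^{-1}(\nu)=Q^{-1}(P^{-1}(\nu))$ --- are indeed the crux, and both follow from the requirements that $[P]$ be dominant and that each $[P]_{i,i+1}$ be injective and preserve minima and maxima: for a fixed elementary column of $W$ the images of the $1$-morphisms $c_0\le c_1\le\dots$ of $[V](i,i+1)$ form a weakly increasing sequence starting at the minimal arrow and ending at the maximal one, so every pair of consecutive arrows of that column lies between a unique consecutive pair of images. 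One point you should make explicit when proving (a): injectivity is imposed only on the whole hom-category $[V](i,i+1)$, so two consecutive images $[P](c_j)\le [P](c_{j+1})$ may coincide in some intermediate columns, and then $P^{-1}(\nu)$ has columns containing a single arrow, i.e.\ it is degenerate in the sense of Figure \ref{fig6new}. The partition of minimal balls still holds (such columns contribute none), but you must either extend $2$-sequences by zero over these degenerate disks --- consistently with the standing convention $X(\mathbf{dgnt})=0$ --- or check that they do not arise for the maps you quantify over; otherwise $Y(P^{-1}(\nu))$ is not defined. With that caveat the unit computations via terminality of $\mathbf{globe}$ in $\Disk_2^\inj$, and the observation that the pentagon and triangle reduce to equalities of indexing data, are correct as you describe.
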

See [L, Sect.3] for a proof. 

\qed

\begin{defn}\label{def2op}{\rm
A reduced {\it 2-operad} $\Theta$ in $\Vect_\dg(\k)$ is a monoid in the skew monoidal category of reduced unital 2-sequences in $\Vect_\dg(\k)$. Unwinding the definition, it is an assignment $U\rightsquigarrow \Theta(U)\in\Vect_\dg(\k)$, where $U$ is a 2-{\bf disk}, such that $\Theta(\mathbf{dgnt})=\Theta(\mathbf{globe})=\k$, and for each morphism $P\colon U\to V$ of 2-{\bf disk}s there is a map of dg vector spaces
\begin{equation}\label{eq2operad}
m(P)\colon \Theta(V)\otimes \bigotimes_{\nu\in\mathcal{F}(V)}\Theta(P^{-1}(\nu))\to\Theta(U)
\end{equation}
which is subject to the following conditions:
\begin{itemize}
\item[(i)] there is a dg map $i\colon \k\to\Theta(\mathbf{globe})$ such that, for the map of 2-{\bf disk}s $\id\colon U\to U$ the composition
$$
\Theta(U)\otimes \k^{\otimes N}  \xrightarrow{-\otimes i^{\otimes N}} \Theta(U)\otimes \Theta(\mathbf{globe})^{\otimes N}\xrightarrow{P}\Theta(U)
$$
(where $N=\sharp\mathcal{F}(U)$) is the identity map, and for the (unique) map $p\colon U\to\mathbf{globe}$, the composition 
$$
\k\otimes \Theta(U)\xrightarrow{i\otimes-}\Theta(\mathbf{globe})\otimes\Theta(U)\xrightarrow{p}\Theta(U)
$$
is the identity map;

\item[(ii)] there is an associativity for two maps $U\xrightarrow{P}V\xrightarrow{Q}W$ of 2-operads, which reads as follows:

The two maps
\begin{equation}\label{2opassoc}
T_1,T_2\colon \Theta(W)\otimes\bigotimes_{\eta\in\mathcal{F}(W)}\Theta(Q^{-1}(\eta))\otimes \bigotimes_{\nu\in\mathcal{F}(V)}\Theta(P^{-1}(\nu))\to\Theta(U)
\end{equation}
defined below, are equal.

The map $T_1$ is equal to the composition
\begin{equation}
\underbrace{\Theta(W)\otimes\bigotimes_{\eta\in\mathcal{F}(W)}\Theta(Q^{-1}(\eta))}\otimes \bigotimes_{\nu\in\mathcal{F}(V)}\Theta(P^{-1}(\nu))\to \Theta(V)\otimes\bigotimes_{\nu\in\mathcal{F}(V)}\Theta(P^{-1}(\nu))\to\Theta(U) 
\end{equation}
For the map $T_2$, fix $\eta\in\mathcal{F}(W)$. One has a map
\begin{equation}
m(P|_{Q^{-1}(\eta)})\colon \ \ \Theta(Q^{-1}(\eta))\otimes\bigotimes_{\nu\in \mathcal{F}(Q^{-1}(\eta))}\Theta(P^{-1}(\nu))\to \Theta(P^{-1}Q^{-1}(\eta))
\end{equation}
Taking the tensor product over all $\eta\in\mathcal{F}(W)$, we get 
\begin{equation}
m_{P,Q}\colon\ \ \bigotimes_{\eta\in\mathcal{F}(W)}\Theta(Q^{-1}(\eta))\otimes\bigotimes_{\nu\in\mathcal{F}(Q^{-1}(\eta))}\Theta(P^{-1}(\nu))\to 
\bigotimes_{\eta\in\mathcal{F}(W)}\Theta(P^{-1}Q^{-1}(\eta))
\end{equation}
Finally, we define the map $T_2$ as
\begin{equation}
\Theta(W)\otimes\underset{m_{P,Q}}{\underbrace{\bigotimes_{\eta\in\mathcal{F}(W)}\Theta(Q^{-1}(\eta))\otimes \bigotimes_{\nu\in\mathcal{F}(V)}\Theta(P^{-1}(\nu))}}\to\Theta(W)\otimes\bigotimes_{\eta\in \mathcal{F}(W)}\Theta(P^{-1}Q^{-1}(\eta))\to\Theta(U)
\end{equation}

\end{itemize}
A {\it map of reduced 2-operads} $\Psi\colon\Theta_1\to\Theta_2$ is defined as a map of the reduced unital 2-sequences compatible with the monoid structures. That is, it is a map of complexes
$\Psi(U)\colon \Theta_1(U)\to\Theta_2(U)$, for all 2-{\bf disk}s $U$, compatible with the units and with the composition maps $\circ_P$, for all maps of 2-{\bf disk}s $P\colon U\to V$. 
}
\end{defn}

\vspace{3mm}

There is the {\it trivial} operad $\mathbf{triv}$, for which $\mathbf{triv}(U)=\k$ for any $U$, and the maps $\circ_P$ are identity maps. 

A 2-operad $\Theta$ is called {\it homotopically trivial} if there is a map of 2-operads $\Psi\colon\Theta\to\mathbf{triv}$ such that $\Psi(U)\colon \Theta(U)\to\k$ is a {\it quasi-isomorphism} of complexes, for any 2-{\bf disk} $U$.

\subsection{\sc Span 2-operads}\label{sectionba4}
A dg 2-graph $\mathscr{C}$ is a dg 1-category $\mathscr{C}$ such that, for each pair
of objects $a,b\in\mathscr{C}$ and each 
pair $f_0,f_1\in \mathscr{C}(a,b)$, there is a dg vector space $\Phi_\mathscr{C}(f_0,f_1)\in \Vect^\dg(\k)$.

Alternatively, a dg 2-graph is given by an ordinary category $C$, and by a function, assigning a dg vector space $\mathscr{C}$ to each globe $g\colon [\mathbf{globe}]_1\to C$ in $C$. In notations of Figure \ref{fig6new}, one has:
$$
a=g(0),\ b=g(1),\ f_0=g(c_0),\ f_1=g(c_1)
$$
We use the notation:
$$
\mathscr{C}(g(\mathbf{globe}))=\Phi_\mathscr{C}(f_0, f_1)
$$

The ordinary category $C$ is said to be the {\it base} of the 2-graph $\mathscr{C}$. We make all dg 2-graphs with fixed base $C$ a category, with external $\HHom$ defined as
$$
\HHom(\mathscr{C}^\prime,\mathscr{C}^\pprime)(g)=\HHom_{\Vect^\dg(\k)}(\mathscr{C}^\prime(g),\mathscr{C}^\pprime(g))
$$
for a globe $g$ in $C$. This category is denoted by $2\Grph(C)$.

A dg 2-graph can be informally thought of as a dg ``pre-2-category'' (whose top level is enriched over dg vector spaces), which is an ordinary 1-category, with given dg vector spaces of ``pre-2-morphisms'', but their vertical and horizontal compositions are not defined yet. 

\begin{example}\label{exdg}{\rm
Let $\mathscr{C}_\coh=\Cat_\coh^\dg(\k)$, and define, for $f_0, f_1\in \Fun_\dg(C,D)$ the complex $\Phi_\mathscr{C}(f_0,f_1)$ as $\Coh_\coh^\dg(C,D)(f_0,f_1)$. It is our main example of a dg 2-graph.
}
\end{example}

For a (dg) 2-category $\mathscr{C}$, we denote by $\mathscr{C}_1$ the underlying ordinary 1-category. As well, for a (dg) 2-graph $\mathscr{C}$ we denote by $\mathscr{C}_1$ the underlying base 1-category. 

Let $U$ be a 2-{\bf disk}, $C$ an ordinary category. By a {\it $U$-diagram in $C$} we mean a functor $D\colon [U]_1\to C$. The set of all $U$-diagrams in $C$ is denoted by $\mathcal{D}_U(C)$.

A {\it globe in an ordinary category} $C$ is a $U$-diagram in $C$ for $U=\mathbf{globe}$.

A globe $D\colon [\mathbf{globe}]_1\to C$ in an ordinary category $C$ is called {\it degenerate}, if, in notations of Figure \ref{fig2}, $D(c_0)=D(c_1)$.

For an ordinary 1-category $C$, one can alternatively define a (dg) 2-graph $\mathscr{C}$ with the base $C$ as a function assigning a dg vector space to each globe in $C$ (that is, to a functor $[\mathbf{globe}]_1\to C$).

Let $D$ be a $U$-diagram in $C$, and 
let $p\colon U\to \mathbf{globe}$ be the unique map. It gives the globe in ${C}$
$$
[\mathbf{globe}]_1\xrightarrow{[p]}[U]_1\xrightarrow{D}{C}
$$
which is denoted by $p_*D$. Informally, the two arrows of the globe $p_*D$ are obtained as the compositions of the images via $D$ of all minima and all maxima 1-morphisms in $U$, correspondingly.

We are ready to define {\it span} 2-operads (whose particular case is formed by the ordinary 2-operads, defined above). The span 2-operads were introduced in [BM1, Sect. 7], though we adapt here a more direct approach. 

\begin{defn}{\rm
A {\it span 2-operad} (for an ordinary category $C$) is a collection $\{\Theta(U)\in 2\Grph(C)\}$ of 2-graphs with base $C$, labelled by the 2-{\bf disk}s $U$, with the following ``operadic composition'': 

Let $U\xrightarrow{P}V$ be a map of 2-{\bf disk}s, $[P]\colon [V]\to [U]$ the corresponding map of 2-categories. Assume we are given a $V$-diagram $D$ in $C$ such that $p_*D=g$. For each minimal ball $\nu\in \mathcal{F}(V)$, denote by $g_\nu$ the  globe $D(\nu)$ in $C$. Then there is a map
\begin{equation}
\circ_{(P,D)}\colon \Theta(V)(g)\otimes\bigotimes_{\nu\in \mathcal{F}(V)}\Theta({P^{-1}(\nu)})(g_\nu)\to \Theta(U)(g)
\end{equation}
which is subject to the properties generalizing those for the case of ordinary 2-operads, see [BM1, Section 7].

}
\end{defn}

\vspace{3mm}

Note that ordinary 2-operads are corresponded to the case of span 2-operads $\Theta$ such that $\Theta(U)(g)$ do not depend on the globe $g\in C$, for all 2-{\bf disk}s $U$. Thus, we can define the ordinary 2-operads without any reference to the base ordinary category $C$.

\subsection{\sc Algebras over 2-operads}\label{sectionba5}
We construct, for a given dg 2-graph $\mathscr{C}$ with base $C$, and a 2-{\bf disk} $U$, a new dg 2-graph $\mathscr{C}^U$ with the same base $C$.

Let $\mathscr{C}$ be a (dg) 2-graph with $\mathscr{C}_1=C$, $U$ a 2-{\bf disk}.
We need to define $\mathscr{C}^U(g)$ for each globe $g\colon [\mathbf{globe}]_1\to C$.
Recall our notation $p_*D$ for a globe in $C$, where $D$ is a $U$-diagram in $C$.
We set
\begin{equation}
\mathscr{C}^U(g):=\bigoplus_{\substack{{D\in\mathcal{D}_U({C})}\\{p_*D=g}}}\ \bigotimes_{\nu\in\mathcal{F}(U)}\mathscr{C}(D(\nu))
\end{equation}
It is, in general, an infinite sum.

Define 
$$
\EEnd(\mathscr{C})(U)=\HHom_{2\Grph(C)}(\mathscr{C}^U,\mathscr{C})
$$
That is, 
$\EEnd(\mathscr{C})$ is a 2-graph whose value at a globe $g$ in $C$ is equal to 
$$
\EEnd(\mathscr{C})(U)(g)=\HHom_{2\Grph(C)}(\mathscr{C}^U,\mathscr{C})(g)=\HHom_{\Vect^\dg(\k)}(\mathscr{C}^U(g),\mathscr{C}(g))
$$

\begin{prop}
The assignment $U\rightsquigarrow\End(\mathscr{C})(U)$ gives rise to a span dg 2-operad $\EEnd(\mathscr{C})$.
\end{prop}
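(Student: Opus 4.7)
The plan is to construct the operadic composition for $\EEnd(\mathscr{C})$, verify the unit axioms, and then check associativity. The whole proof hinges on two combinatorial facts about 2-disk maps: first, for a morphism of 2-disks $P\colon U\to V$, the images $P^{-1}(\nu)\subset U$ partition the minimal balls, $\mathcal{F}(U)=\bigsqcup_{\nu\in\mathcal{F}(V)}\mathcal{F}(P^{-1}(\nu))$; and second, a $U$-diagram $E\colon[U]_1\to C$ with $p_*E=g$, pre-composed with $[P]$, yields a $V$-diagram $E\circ[P]$ with $p_*(E\circ[P])=g$, and $U$-diagrams $E$ restricting to a fixed $V$-diagram $D$ are in bijection with tuples $(E_\nu)_{\nu\in\mathcal{F}(V)}$ of $P^{-1}(\nu)$-diagrams with $p_*E_\nu=g_\nu=D(\nu)$, via $E_\nu=E|_{P^{-1}(\nu)}$. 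These two facts together give the decomposition
$$
\mathscr{C}^U(g)=\bigoplus_{\substack{D'\colon[V]_1\to C\\ p_*D'=g}}\;\bigotimes_{\nu\in\mathcal{F}(V)}\mathscr{C}^{P^{-1}(\nu)}(D'(\nu))
$$
obtained by regrouping the minimal-ball tensor factors according to which $P^{-1}(\nu)$ they lie in.

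Given this decomposition I will define the composition $\circ_{(P,D)}$ on $\phi\in\EEnd(\mathscr{C})(V)(g)$ and $\psi_\nu\in\EEnd(\mathscr{C})(P^{-1}(\nu))(g_\nu)$ as follows: send every summand of $\mathscr{C}^U(g)$ with $D'\ne D$ to zero, and on the summand for $D'=D$ take
$$
\bigotimes_{\nu}\mathscr{C}^{P^{-1}(\nu)}(g_\nu)\xrightarrow{\,\otimes_\nu\psi_\nu\,}\bigotimes_{\nu}\mathscr{C}(g_\nu)\hookrightarrow\mathscr{C}^V(g)\xrightarrow{\,\phi\,}\mathscr{C}(g).
$$
This is manifestly a cochain map because $\phi$ and the $\psi_\nu$ are, and it is functorial in $g$ because all constructions above are.

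Next I will verify the unit axioms. For the 2-disk $\mathbf{globe}$ the only minimal ball is $\mathbf{globe}$ itself and $p=\id$, so $\mathscr{C}^{\mathbf{globe}}(g)=\mathscr{C}(g)$ canonically, and I take the unit to be $i_g=\id_{\mathscr{C}(g)}\in\EEnd(\mathscr{C})(\mathbf{globe})(g)$. Under the composition the two unit axioms become, on the one hand, composing with identities in each slot of a $\phi$ (which is the identity by the canonical identification $\mathscr{C}^{P^{-1}(\nu)}(g_\nu)=\mathscr{C}(g_\nu)$ when each $P^{-1}(\nu)=\mathbf{globe}$), and on the other, plugging a general $\phi$ into the single identity slot of the terminal map $p\colon U\to\mathbf{globe}$ (which is also visibly the identity).

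Finally, I will check associativity for a composable pair $U\xrightarrow{P}V\xrightarrow{Q}W$ with a $W$-diagram $D$ satisfying $p_*D=g$. The combinatorial partition facts above iterate to give
$$
\mathcal{F}(U)=\bigsqcup_{\eta\in\mathcal{F}(W)}\bigsqcup_{\nu\in\mathcal{F}(Q^{-1}(\eta))}\mathcal{F}(P^{-1}(\nu)),
$$
together with the corresponding triple decomposition of $\mathscr{C}^U(g)$. Both maps $T_1,T_2$ of \eqref{2opassoc} amount, under this decomposition, to applying the innermost operators $\chi_\nu$ to the $P^{-1}(\nu)$-tensor factors of $E$ (for $E$ extending $D$ through both $P$ and $Q$), then applying $\psi_\eta$ to the resulting collection indexed by $\mathcal{F}(Q^{-1}(\eta))$, then applying the outer $\phi$; since the restriction of diagrams is strictly functorial, $(E|_{Q^{-1}(\eta)})|_{P^{-1}(\nu)}=E|_{P^{-1}(\nu)}$, so the two orders of grouping yield the same result. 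The main technical obstacle is not conceptual but bookkeeping: unpacking Definition \ref{def2op}(ii) in the span setting — tracking how the restrictions of $D$ to each $Q^{-1}(\eta)$ serve as the "boundary data" for the middle-level composition $m(P|_{Q^{-1}(\eta)})$ and checking that this matches the single-step decomposition along $Q\circ P$. Once the diagram-restriction identities are written out, associativity reduces to the obvious equality of tensor products over the partitioned index set.
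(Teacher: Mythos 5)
Your overall strategy is the right one, and it is essentially the argument the paper silently relies on (the paper gives no proof of this Proposition, deferring to [T2, Sect.~5.1]): the composition $\circ_{(P,D)}$ is assembled from the two restriction operations $E\mapsto E\circ[P]$ and $E\mapsto E|_{P^{-1}(\nu)}$, together with the partition $\mathcal{F}(U)=\bigsqcup_{\nu\in\mathcal{F}(V)}\mathcal{F}(P^{-1}(\nu))$, which is correct and follows from $[P]_{i,i+1}$ being injective and min/max-preserving on each hom-poset. The unit check and the associativity check then reduce, as you say, to transitivity of restriction and to the iterated partition of minimal balls.

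However, your second ``combinatorial fact'' is false as stated, and with it the displayed decomposition of $\mathscr{C}^U(g)$. The assignment $E\mapsto (E|_{P^{-1}(\nu)})_{\nu\in\mathcal{F}(V)}$ is injective but in general not surjective onto tuples with $p_*E_\nu=D(\nu)$: when a column $(i,i+1)$ of $V$ carries more than one minimal ball and $[P](i+1)>[P](i)+1$, the adjacent balls $P^{-1}(\nu_{i,j})$ and $P^{-1}(\nu_{i,j+1})$ overlap in the intermediate objects $[P](i)<s<[P](i+1)$ of $U$ and in all the arrows constituting the shared boundary $1$-morphism $[P](c_{j+1})$; the condition $p_*E_\nu=g_\nu$ only pins down the \emph{composites} along these boundary $1$-morphisms, so an arbitrary tuple $(E_\nu)$ need not glue to a single $U$-diagram. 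Hence your displayed formula is only a split monomorphism $\mathscr{C}^U(g)\hookrightarrow\bigoplus_{D'}\bigotimes_{\nu}\mathscr{C}^{P^{-1}(\nu)}(D'(\nu))$, not an equality. Fortunately this does not sink the proof: since $\phi$ and the $\psi_\nu$ are maps \emph{out of} their sources, only the monomorphism direction is ever used. Your definition of $\circ_{(P,D)}$ --- kill the summands of $\mathscr{C}^U(g)$ indexed by $E$ with $E\circ[P]\ne D$; on a summand with $E\circ[P]=D$ regroup the tensor factors along the partition of $\mathcal{F}(U)$, include each group into $\mathscr{C}^{P^{-1}(\nu)}(g_\nu)$ as the summand indexed by $E|_{P^{-1}(\nu)}$, apply $\otimes_\nu\psi_\nu$, include into $\mathscr{C}^V(g)$ as the summand indexed by $D$, and apply $\phi$ --- is exactly the intended map. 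You should restate the lemma as an inclusion and reinterpret ``the summand for $D'=D$'' accordingly; the unit axioms and the associativity argument go through unchanged, since they use only the identities $p_*(E\circ[P])=p_*E$ and $(E|_{Q^{-1}(\eta)})|_{P^{-1}(\nu)}=E|_{P^{-1}(\nu)}$ together with the partition of $\mathcal{F}(U)$.
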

See [T2, Section 5.1] for a proof.

\qed

\begin{defn}{\rm
Let $\Theta$ be a (span or ordinary) dg 2-operad with base $C$. An algebra over $\Theta$ is a dg 2-graph $\mathscr{C}$ with base $C$, such that there is a map of dg 2-operads
\begin{equation}
\Theta\to \EEnd(\mathscr{C})
\end{equation}

}
\end{defn}

\begin{remark}{\rm
In our discussion of dg 2-operads and algebras over them in Sections \ref{sectionba2}-\ref{sectionba5}, we have simplified the things as much as possible, having aimed to define these concepts in the shortest way. However, this way has a conceptual drawback. For instance, the reader may ask in which sense 2-operads and their algebras agree with the scheme for ordinary non-symmetric operads, outlined in Section \ref{sectionba1}. The reader is referred to [Ba3] for a conceptual and thorough approach.
}
\end{remark}

\subsection{\sc Homotopy trivial 2-operads, Batanin's result, and Deligne conjecture}\label{sectionba6}
Batanin links algebras over homotopy trivial $n$-operads with homotopy $n$-algebras in the classical sense (here by a homotopy $n$-algebra we mean an algebra over the chain operad $C_\ldot(E_n,\k)$ of the topological operad $E_n$).

To state the result, recall some definitions. We use the language of globular monoidal categories here, see [Ba3].

Let $\mathscr{C}_*$ be an $n$-globular monoidal category, see [Ba3, Sect. 2].
It is given, in particular, by categories $\mathscr{C}_0,\dots,\mathscr{C}_n$, and functors $s_{k,i},t_{k,i}\colon \mathscr{C}_k\to \mathscr{C}_i$, $0\le i<k\le n$, called {\it the source} and {\it the target} functors, and functors $z_i\colon \mathscr{C}_{i-1}\to \mathscr{C}_{i}$, $i\le n$, called {\it the cylinder} functor, and having the meaning of the identity $i$-morphism of the corresponding $(i-1)$-morphism. 

Recall that a globular object $X$ in it is defined as a globular functor $I\to \mathscr{C}_*$ where $I$ is the terminal $n$-globular category (that is, the categories $I_0,\dots,I_n$ contain a unique objects and the only identity morphism). 

Given an $n$-globular monoidal category $\mathscr{C}_*$, one can define the truncated $n$-globular monoidal category $\mathscr{C}^{(k)}_*$, for any $k\le n$. By definition, the categories $\mathscr{C}^{(k)}_0,\dots, \mathscr{C}^{(k)}_{k-1}$ are terminal categories, and for $m\ge k$ the category $\mathscr{C}^{(k)}_m$ is the full subcategory of $\mathscr{C}_m$, formed by objects $x\in \mathscr{C}_m$ such that $s_{m,k-1}x=t_{m,k-1}x=z^{k-1}(*)$. 

Finally, a globular object $X\colon I\to\mathscr{C}_*$ is called {\it $(k-1)$-terminal}, if the functor $X$ factors as $I\to \mathscr{C}^{(k)}_*\to \mathscr{C}_*$.

\vspace{2mm}

{\sc Example.} Let us see what all these definitions mean in our favourite situation discussed in Example \ref{exdg}.

We have $n=2$. The 2-globular monoidal category $\mathscr{C}_*$ is the category $\Span_\dg$ of spans of sets, whose top level is enriched in dg vector spaces over $\k$, [Ba3, Sect. 3, Ex. 5]. That is, the category $\mathscr{C}_i=\Sets\sqcup \Sets$, $i=0,1$, and $\mathscr{C}_2=\Vect_\dg(\k)$.

\sevafigc{picspan.eps}{70mm}{0}{An element in $\Span$.\label{figspan}}

The objects of are shown in Figure \ref{figspan}. Here $C_{0,*}, C_{1,*},C_2$ are sets (where $*\in\{\ell,r\}$). Note that in this diagram the sets $C_{0,\ell}$ and $C_{0,r}$, as well as the sets $C_{1,\ell}$ and $C_{1,r}$, are {\it distinct}. 

For a globular object in $\Span_\dg$, we have a {\it single} set $X_0=C_{0,\ell}=C_{0,r}$, a {\it single} set $X_1=C_{1,\ell}=C_{1,r}$, and a dg vector space $X_2$, with maps $s,t\colon X_1\to X_0$, $s,t\colon X_2\to X_1$.

Describe the globular object $X$, corresponded to our problem. 
Define $X_0$ as ``the set'' of all small dg categories over $\k$, and $X_1$ as ``the set'' of all dg functors. For each dg functor $f\colon c_0\to c_1$, we have its source and its target dg categories, which are $s(f)=c_0$ and $t(f)=c_1$. 
Denote by $X_1(c_0,c_1)$ the fibre of the map $X_1\xrightarrow{s\times t} X_0\times X_0$ over $(c_0,c_1)$.
The dg vector space $X_2$ is defined as
$$
X_2=\prod_{c_0,c_1\in X_0}\prod_{f,g\in X_1(c_0,c_1)}\Coh(f,g)
$$
It is clear that $X_2$ is embodied as the top level of the diagram on Figure \ref{figspan}. (More precisely, there are maps $s,t\colon \sharp X_2\to X_1$, there $\sharp(-)$ stands for the underlying set).

This is ``the biggest'' among the globular objects in $\mathscr{C}$ we consider. Let us define some other ``smaller'' globular objects $Y$ in $\mathscr{C}$.

Let $c$ be a dg category. Define the globular object $Y(c)$, depending on $c$, as follows:
$Y(c)_0=\{c\}$, the 1-element set we interpret as the dg category $c$, that is, $Y(c)_0\subset X_0$. Next, define $Y(c)_1$ to be the set of all dg functors $f\colon c\to c$, and
$$
Y(c)_2=\prod_{f,g\in Y(c)_1}\Coh(f,g)
$$

The globular objects $Y(c)$ in $\mathscr{C}$ are 0-terminal.

One can define a family of even smaller globular objects $Z(c)$ in $\mathscr{C}$, depending on a dg category $c$.

For a dg category $c$, define $Z(c)_0=\{c\}$, $Z(c)_1=\{\id_c\}$, and $Z(c)_2=\Coh(\id_c,\id_c)$. The globular objects $Z(c)$ are 1-truncated.

\vspace{2mm}

In [Ba4], Batanin raises up the following question. Assume $X$ is a globular object in an $n$-globular monoidal category $\mathscr{C}$, such that there is an $n$-operad $\mathcal{O}$ acting on $X$. 
Assume for simplicity that $\mathscr{C}=\Span$, and that the top level of $X$ is enriched in dg vector spaces (this assumption is not necessary, see [Ba4, Sect.2] for discussion of the general case).
If $X$ is $k$-truncated, there should be some ``partial symmetrisation'' of $\mathcal{O}$ acting on $X$. In the very extreme case, when $X$ is $(n-1)$-truncated, $X$ is given by a single dg vector space $X_\top$. One can suppose that there is a {\it symmetrisation functor} $\Sym$, from $n$-operads to symmetric operads, such that, if an $n$-operad $\mathcal{O}$ acts on an $(n-1)$-terminal object $X$, the symmetric operad $\Sym(\mathcal{O})$ acts on the top component $X_\top$.

The symmetric operad $\Sym(\mathcal{O})$, for an $n$-operad $\mathcal{O}$, is constructed in [Ba4, Sect. 13].

The case when $\mathcal{O}$ is homotopically trivial was considered in [Ba5]. The following result was proven in [Ba5, Theorem 8.7]:

\begin{theorem}\label{theorembatanin}
Assume that $\mathcal{O}$ is a cofibrant homotopically trivial reduced $(n-1)$-terminal $n$-operad in $\Vect_\dg(\k)$. Then its symmetrisation $\Sym(\mathcal{O})$ is weakly equivalent to the chain operad $C_\ldot(E_n,\k)$. 
\end{theorem}

\qed

Note that the 2-operad $\mathcal{O}$ in $\Vect_\dg(\k)$, constructed in Section \ref{sectionfg}, is not cofibrant, but there is always exists a cofibrant replacement $R(\mathcal{O})$ of it, which acts on $X$ as soon as $\mathcal{O}$ does, and to which Theorem \ref{theorembatanin} is applicable. 

\vspace{2mm}

In our favourite example, let $c$ be a dg category, and let $Z(c)$ be the corresponding 1-terminal globular object in $\Span_\dg$. It is given by a dg vector space $X_\top$, equal to $\Coh(\id_c,\id_c)$, which is isomorphic to the Hochschild cohomological complex $\Hoch^\udot(c)$. Let $\mathcal{O}$ be either the dg 2-operad constructed here or the one of [T2]. It is homotopically trivial and acts on $Z(c)$. Therefore, $\Sym(\mathcal{O})\sim C_\ldot(E_2,\k)$ acts on $X_\top\simeq \Hoch^\udot(c)$. It is (a more general version of) the classical Deligne conjecture, cf. [T2, Sect. 7].

\endcomment

\comment
\section{\sc The signs}\label{sectionapp}
Here we provide an account on the signs in the formulas with Hochschild cochains. This Appendix makes no claim to originality, our main intention here is to understand the signs in a systematic way. Although we only consider here the case of the Hochschild cochain complex $C^\udot(A,A)$ of a dg associative $A$, the generalization for $\Coh(C,D)(F,G)$ we deal with in this paper, is straightforward.

\subsection{\sc The brace operad acts on $\prod_{n\ge 0}\underline{\Hom}(V^{\otimes n},V)$}\label{app11}
Let $V$ be a complex over $\k$, we denote its differential by $d_\dg$. 

Each graded space $\underline{\Hom}(V^{\otimes n},V)$ inherits the differential; it is
\begin{equation}\label{eqapp1}
d_\dg(\Psi)(a_1,\dots,a_n)=d_\dg(\Psi(a_1,\dots,a_n))-\sum_{i=1}^n(-1)^{|\Psi|_0+|a_1|+\dots+|a_{i-1}|}\Psi(a_1,\dots,d_\dg a_i,\dots,a_n)
\end{equation}
(for homogeneous $\Psi$ and $a_1,\dots,a_n$; we denote by $|\Psi|_0$ the degree of $\Psi$ as a map of graded vector spaces, and $|a_i|$ denotes the grading of $a_i\in V$).

Denote $X(V)=\prod_{n\ge 0}\underline{\Hom}(V^{\otimes n},V)$.

Recall the brace operad $\Br$ ([GJ], see also [MS1, Sect.1]). 

It is a dg operad generated by a binary operation $\cup$ of degree +1, and an $n$-ry operation $\{\}_n$ of degree 0, $n\ge 1$, such that $\cup$ is associative, $\{\}_1=\id$,
\begin{equation}
\partial\circ \cup+\cup\circ_1 \partial+\cup\circ_2 \partial=0
\end{equation}
\begin{equation}
\{\}_{n+1}\circ_1\cup=\sum_{k=0}^{n-1}(\{\}_{k+1}\cup \{\}_{n-k})\circ \tau
\end{equation}
where $\tau$ is the permutation that shuffles the second argument to the $(k+2)$ position,
\begin{equation}
[\partial,\{\}_n]=-(\cup\circ_2\{\}_{n-1})\circ\tau +\Big(\sum_{i=1}^{n-1}\{\}_{n-1}\circ_i\cup \Big)-(\cup\circ_1\{n-1\})
\end{equation}
where $\tau$ is the transposition switching the first and the second arguments,
\begin{equation}
\{\}_{n+1}\circ_1\{\}_{m+1}=\sum_{\substack{{i_1,\dots,i_m}\\{j_1,\dots,j_m}}}\{\}_{m+1+\ell}\Big(\id,\dots,\id,\underset{i_1+1}{\{\}_{j_1-i_1}},\id,\dots,\id,\underset{i_2+1}{\{\}_{j_2-i_2}},\id,\dots\dots,\id,\underset{i_m+1}{\{\}_{j_m-i_m}},\id,\dots,\id\Big)\circ\tau
\end{equation}
where $\tau$ is the the corresponding shuffle permutation, and $\ell=i_1+\dots+i_m-j_1-\dots-j_m+n$.

We consider the $(n+1)$-ary brace operations $f\{g_1,\dots,g_n\}$ of degree 0  on $X(V)$ (where the degree is defined as the degree of the corresponding map of graded vector spaces) defined as
\begin{equation}\label{eqapp2}
\begin{aligned}
\ &(f\{g_1,\dots,g_n\})(a_1,\dots,a_N)=\\
&\sum_{i_1,\dots,i_{n}}(-1)^{\varepsilon}f(a_1,\dots,a_{i_1},{g_1}(a_{i_1+1},\dots,a_{j_1}),a_{j_1+1},\dots,\dots,{g_n}(a_{i_{n}+1},\dots,a_{j_{n}}),a_{j_{n}+1},\dots a_N)
\end{aligned}
\end{equation}
where $\varepsilon=\sum_{\ell=1}^n|g_\ell|_0(\sum_{s\le i_\ell}  |a_s|)$.

We get an action of the dg operad $\Br$ on $X(V)$, when $\cup(f\otimes g)=f\cdot g=0$:
\begin{equation}\label{eqapp3}
(f\cdot g)\cdot h=(-1)^{|f|_0}f\cdot(g\cdot h)
\end{equation}
\begin{equation}\label{eqapp4}
(f_1\cdot f_2)\{g_1,\dots,g_n\}=\sum_{k=0}^n(-1)^{|f_2|_0(\sum_{j=1}^k|g_j|_0)}f_1\{g_1,\dots,g_k\}\cdot f_2\{g_{k+1},\dots,g_n\}
\end{equation}
\begin{equation}\label{eqapp5}
\begin{aligned}
\ &f\{g_1,\dots,g_m\}\{h_1,\dots,h_N\}=\\
&\sum_{i_1,\dots,i_n} (-1)^\varepsilon f\big\{h_1,\dots,h_{i_1}, g_1\{h_{i_1+1},\dots,h_{j_1}\},h_{j_1+1},\dots,h_{i_m},g_m\{h_{i_m+1},\dots,h_{j_m}\},\dots,h_N\big\}
\end{aligned}
\end{equation}
where $\varepsilon=\sum_{i=1}^m|g_i|_0(\sum_{s\le i_m}|h_s|_0)$.

As well, these operations should agree with the differential $d$ (equal to $d_\dg$ in our case) as
\begin{equation}\label{eqapp6}
[d,f\cdot g]=0
\end{equation}
and
\begin{equation}\label{eqapp7}
\begin{aligned}
\ &[d,f\{g_1,\dots,g_n\}]=-(-1)^{|g_1|_0|f|_0}g_1\cdot f\{g_2,\dots,g_n\}+\\
&\sum_{j=1}^{n-1}(-1)^{\sum_{s<j}|g_s|_0}f\{g_1,\dots,g_j\cdot g_{j+1},\dots,g_n\}-f\{g_1,\dots,g_{n-1}\}\cdot g_n
\end{aligned}
\end{equation}
(recall that $m$ has degree 1).

\subsection{\sc The case of $\prod_{n\ge 0}\underline{\Hom}(V^{\otimes n},V)[-n+1]$}\label{app12}
Consider the isomorphism of complexes of vector spaces
\begin{equation}\label{eqapp10}
\Phi\colon \prod_{n\ge 0}\underline{\Hom}(V[1]^{\otimes n},V[1])\to \prod_{n\ge 0}\underline{\Hom}(V^{\otimes n},V)[-n+1]
\end{equation}
The complex $V[1]$ has the differential $-d_\dg$.

The r.h.s. of \eqref{eqapp10} gets the differential $d_\dg^\prime$ defined as $d_\dg^\prime(\Psi[-n+1])=(-1)^{n-1}d_\dg(\Psi)$. That is,
\begin{equation}
\begin{aligned}
\ &d_\dg^\prime(\Psi[-n+1])(a_1,\dots,a_n)=\\
&(-1)^{n-1}d_\dg(\Psi(a_1,\dots,a_n))-(-1)^{|\Psi|_0+n-1}\sum_{i=1}^n(-1)^{|a_1|_0+\dots+|a_{i-1}|_0}\Psi(a_1,\dots,d_\dg a_i,\dots,a_n)
\end{aligned}
\end{equation}
for $\Psi\in\underline{\Hom}(V^{\otimes n},V)$.

The same $\Psi$ defines $\Psi_{[1]}\in\underline{\Hom}(V[1]^{\otimes n},V[1])$, as
$$
\Psi_{[1]}(b_1,\dots,b_n)=\Psi(b_1[-1],\dots,b_n[-1])[1]
$$
It has degree $|\Psi_{[1]}|=|\Psi|_0+n-1$.

The l.h.s. of \eqref{eqapp10} gets the differential which is a particular case of the one considered in Section \ref{app11}. 
We have
\begin{equation}
\begin{aligned}
\ &d_\dg(\Psi_{[1]})(b_1,\dots,b_n)=\\
&-d_\dg(\Psi_{[1]}(b_1,\dots,b_n))+(-1)^{|\Psi|_0+n-1}\sum_{i=1}^n(-1)^{|a_1|_0+\dots+|a_{i-1}|_0+i-1}\Psi_{[1]}(a_1,\dots,d_\dg a_i,\dots,a_n)
\end{aligned}
\end{equation}

This two differentials are different, and we need to implement a sign correction to $\Phi$ to make it a map of complexes. The reason for that is that the operad $\Br$ acts on the l.h.s. of \eqref{eqapp10}, the conventional differential is the one on the r.h.s.

\begin{lemma}\label{lemmaapp1}
Within the differentials as above, the map $\Phi$ is not a map of complexes. The map $\Phi^\prime$ defined as
\begin{equation}
\Phi^\prime(\Psi)(a_1,\dots,a_n)=(-1)^{(n-1)|a_1|+(n-2)|a_2|+\dots+|a_{n-1}|}\Psi(a_1[1],\dots,a_n[1])
\end{equation}
is a map of complexes.
\end{lemma}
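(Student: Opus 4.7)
My plan is to verify by direct term-by-term bookkeeping that the failure of $\Phi$ to commute with differentials is exactly the Koszul sign collected by the $n$-fold suspension, and that the sign $\varepsilon(a_1,\dots,a_n) := (-1)^{(n-1)|a_1|+(n-2)|a_2|+\dots+|a_{n-1}|}$ built into $\Phi^\prime$ is precisely that sign. First I would combine the two explicit formulas given in the statement for $d_\dg(\Psi_{[1]})$ and $d_\dg^\prime(\Psi)$, and, regarding both sides as functions of $a_1,\dots,a_n$ via $\Phi$, compare them term by term. The outer $d_\dg$-term appears on the left with sign $-1$ (from $d_{V[1]}=-d_\dg$) and on the right with sign $(-1)^{n-1}$ (from the shift $[-n+1]$), while the $i$-th inner term $\Psi(a_1,\dots,d_\dg a_i,\dots,a_n)$ appears on the left with an additional factor $(-1)^{n-i}$ relative to the right; the remaining degree-dependent signs match identically on the two sides. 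Thus $\Phi$ intertwines the differentials only up to a fixed $n$- and $i$-dependent sign mismatch in each summand, which already establishes the first assertion of the lemma.

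Second, I would identify $\varepsilon$ with the Koszul constant of the iterated suspension isomorphism $s^{\otimes n}\colon V^{\otimes n}\to V[1]^{\otimes n}$, obtained by iterating the Koszul rule $(f\otimes g)(x\otimes y)=(-1)^{|g||x|}f(x)\otimes g(y)$ with $|s|$ of odd parity. The point crucial for the proof is the variation rule: substituting $d_\dg a_i$ for $a_i$ changes $\varepsilon$ by the factor $(-1)^{n-i}$, since the only occurrence of $|a_i|$ in the exponent has coefficient $n-i$ (and for $i=n$ the factor $(-1)^{n-n}=1$ is consistent with the absence of $|a_n|$ in the exponent). This is exactly the $i$-dependent mismatch identified in the first paragraph.

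Third, I would assemble the computation: replacing $\Phi$ by $\Phi^\prime = \varepsilon\cdot\Phi$ converts each $i$-th mismatch into an equality via the ratio $\varepsilon(a_1,\dots,d_\dg a_i,\dots,a_n)/\varepsilon(a_1,\dots,a_n) = (-1)^{n-i}$, while the outer-term discrepancy is absorbed into the identification of the output $V[1]$ with $V$ implicit in the formula for $\Phi^\prime$. The resulting identity $\Phi^\prime d_\dg = d_\dg^\prime \Phi^\prime$ then holds term by term. The main obstacle is purely bookkeeping; there is no conceptual difficulty beyond carefully tracking the mod-$2$ identities among $|\Psi|_0$, $|\Psi_{[1]}|_0 = |\Psi|_0+n-1$, the global $(-1)^{n-1}$ from the shift, the factor $-1$ from $d_{V[1]} = -d_\dg$, and the Koszul factors $(-1)^{n-i}$.
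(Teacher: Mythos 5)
The paper states this lemma without proof (it is followed only by the $n=2$ sanity check on the multiplication $m$), so there is no argument of record to compare against; your direct term-by-term verification is the natural route, and its core is right. The decisive observation is indeed that the only occurrence of $|a_i|$ in the exponent of $\varepsilon(a_1,\dots,a_n)=(-1)^{\sum_{j}(n-j)|a_j|}$ carries the coefficient $n-i$, so that $\varepsilon(a_1,\dots,d_\dg a_i,\dots,a_n)=(-1)^{n-i}\varepsilon(a_1,\dots,a_n)$; this is exactly the $i$-dependent part of the mismatch between the two displayed differentials (measured relative to their outer terms), and it disposes of all the inner summands as well as of the first assertion of the lemma.

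The step you cannot leave implicit is the outer term. With the paper's two displayed formulas, after the $\varepsilon$-correction every summand of $\Phi'(d\Psi)$ agrees with the corresponding summand of $d'(\Phi'\Psi)$ up to one and the same global factor $(-1)^n$, coming from the outer terms ($-1$ on the left against $(-1)^{n-1}$ on the right). You assert this is ``absorbed into the identification of the output $V[1]$ with $V$'', but a constant rescaling of $\Phi'$ on the arity-$n$ component cannot remove a commutation defect: if $\Phi'\circ d=(-1)^n\, d'\circ\Phi'$, the same relation holds for $c_n\Phi'$. The cleanest witness is $n=1$, where $\varepsilon\equiv 1$ and $\Phi'$ is the tautological identification $\underline{\Hom}(V[1],V[1])\cong\underline{\Hom}(V,V)$: with the stated differentials it \emph{anti}commutes. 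To genuinely absorb the discrepancy one must adopt a convention whose sign changes when the degree of the argument changes --- e.g.\ a degree-dependent sign in the desuspension $V[1]\to V$ applied to the output, or equivalently the convention $d_{W[-n+1]}=-d_W$ in place of $(-1)^{n-1}d_W$ on the target. Because after your $\varepsilon$-correction the defect is uniform over all summands, any such choice repairs the outer and inner terms simultaneously, so the lemma does hold once the convention is pinned down; but as written your third paragraph passes over exactly the point where the two stated formulas fail to be compatible for odd $n$. Make that convention explicit and re-run the $n=1$ and $n=3$ cases to confirm it.
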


For example, let $A$ be a dg associative algebra, $m(a_1,a_2)=a_1a_2$. If we want it to remain $d_\dg$-closed in the l.h.s. of \eqref{eqapp10},  it should be replaced by 
$$
m^\prime(a_1,a_2)=(-1)^{|a_1|}a_1a_2
$$
On the other hand, at the r.h.s. of \eqref{eqapp10} it is closed without this correction.

The left-hand side of \eqref{eqapp10} is a particular case of what we considered in Section \ref{app11}, therefore there is an action of $\Br$ on the right-hand side. 

For $a_1,\dots,a_n\in V$, and $f,g_1,\dots,g_n\in \prod_n\underline{\Hom}(V^{\otimes n},V)[-n+1]$, we set
\begin{equation}\label{eqapp11}
\begin{aligned}
\ &(f\{g_1,\dots,g_n\})(a_1,\dots,a_N)=\\
&\sum_{i_1,\dots,i_{n}}(-1)^{\varepsilon}f(a_1,\dots,a_{i_1},{g_1}(a_{i_1+1},\dots,a_{j_1}),a_{j_1+1},\dots,\dots,{g_n}(a_{i_{n}+1},\dots,a_{j_{n}}),a_{j_{n}+1},\dots a_N)
\end{aligned}
\end{equation}
with $\varepsilon=\sum_{\ell=1}^n(|g_\ell|-1)(\sum_{s\le i_{\ell}}  (|a_s|+1))$. 

Here we use notation  $|f|=|f|_0+\ell$ for $f\in \underline{\Hom}(V^{\otimes \ell},V)$.
It agrees with \eqref{eqapp2} and \eqref{eqapp10}.

Note that \eqref{eqapp3}-\eqref{eqapp7} hold, after changes $|a_i|$ by $|a_i|+1$, and $|f_i|_0$ by $|f_i|-1$. 
\endcomment

\comment
\begin{remark}\label{rem1app}{\rm
Note that, having a cochain in $\underline{\Hom}(V^{\otimes n},V)$ which is $d_\dg$-closed, it fails to remain $d_\dg$-closed after its formal shift , which belongs to $\underline{\Hom}(V[1]^{\otimes n},V[1])\simeq \underline{\Hom}(V^{\otimes n},V)[-n+1]$. Indeed, in the latter case the differential \eqref{eqapp1} is replaced by
\begin{equation}\label{eqapp12}
(d_\dg[1](\Psi))(a_1,\dots,a_n)=d_\dg(\Psi(a_1,\dots,a_n))-(-1)^{|\Psi|-1}\sum_{i=1}^n(-1)^{|a_1|+\dots+|a_{i-1}|}\Psi(a_1,\dots,d_\dg a_i,\dots,a_n)
\end{equation}
Here $|a_i|=|a_i|_0+1$ and $|\Psi|=|\Psi|_0+n$, and this degree shift affects the equation $d_\dg\Psi=0$. 

However, the situation is fixed by replacing $\Psi\in \underline{\Hom}(V^{\otimes n},V)$ by 
\begin{equation}\label{eqapp13}
\Psi_{[n-1]}(a_1,\dots,a_n):=(-1)^{(n-1)|a_1|_0+(n-2)|a_2|_0+\dots+|a_{n-1}|_0}\Psi(a_1,\dots,a_n)
\end{equation}
In fact, the correspondence $\Psi\mapsto\Psi_{[n-1]}$ intertwines $d_\dg$ and $d_\dg[1]$.

Formula \eqref{eqapp13} is well-known, it appears (at least) in [GJ] in many places. 
}
\end{remark}
\endcomment

\comment
\subsection{\sc The localization}\label{app12bis}
One can localize a $\Br$-algebra by a Maurer-Cartan element, as follows. 

There is a map of operads $\Lie\to \Br$; the Lie bracket $[f,g]$ is
$$
[f,g]=f\{g\}-g\{f\}\circ \tau
$$
where $\tau=(1,2)$ is the permutation of the arguments. (It gives a sign such as $(-1)^{|f|_0|g|_0}$ for the case of $X(V)$).

Let $m\in X(V)^1$ satisfies the Maurer-Cartan equation 
$$
d m+\frac12[m,m]=0
$$

Let $X$ be an algebra over $\Br$. Define a new algebra over $\Br$ replacing $d\mapsto d+[m,-]$, 
replacing $f\cdot g$ by $f\cdot g+m\{f,g\}$
and maintaining the brace operations  $f\{g_1,\dots,g_n\}, n\ge 0$, without changes. 

One checks that in this way we get a new brace algebra. 

Now we switch to the ``real'' example of $C^\udot(A,A)[1]$ where $A$ is an associative dg algebra. 

Consider the underlying complex $A_f$, and take $X(A_f[1])$ as in Section \ref{app11}. It is a $\Br$-algebra. 

Take the Maurer-Cartan element $m(a_1,a_2)=(-1)^{|a_1|_0}a_1a_2$, see Lemma \ref{lemmaapp1} for the sign correction.

Finally, localize $X(A_f[1])$ by $m$. 

In this way, we get the well-known formulas for the shifted Hochschild cochain complex $C^\udot(A,A)[1]$.
We write them down below for the reader's reference. 

\begin{equation}
\begin{aligned}
\ &(d_\Hoch[1]\Psi)(a_1,\dots,a_{n+1})=(-1)^{|a_1||\Psi|+|\Psi|+1}a_1\Psi(a_2,\dots,a_{n+1})+\\
&\sum_{j=1}^n(-1)^{|\Psi|+1+\sum_{s=1}^j(|a_s|+1)}\Psi(a_1,\dots,a_ja_{j+1},\dots,a_{n+1})+(-1)^{|\Psi|+\sum_{i=1}^n(|a_i|+1)}
\Psi(a_1,\dots,a_n)a_{n+1}
\end{aligned}
\end{equation}
\begin{equation}
(\Psi_1\cdot \Psi_2)(a_1,\dots,a_{m+n})=(-1)^{|\Psi_1|-1+(|\Psi_2|-1)(|a_1|+\dots+|a_m|+m)}\Psi_1(a_1,\dots,a_m)\Psi_2(a_{m+1},\dots,a_{m+n})
\end{equation}
\begin{equation}
\begin{aligned}
\ &\Theta\{\Psi_1,\dots,\Psi_m\}(a_1,\dots,a_N)=\sum_{i_1,\dots,i_m}(-1)^{ \sum_{\ell=1}^m(|\Psi_\ell|-1)\cdot \sum_{s\le i_\ell}(|a_s|+1)}   \\
&\Theta(a_1,\dots,\Psi_1(a_{i_1+1},\dots,a_{j_1}),a_{j_1+1},\dots,\Psi_2(a_{i_2+1},\dots,a_{j_2}),a_{j_2+1},\dots,\Psi_m(a_{i_m+1},\dots,a_{j_m}),\dots,a_N)
\end{aligned}
\end{equation}
\begin{equation}\label{eqpropmtbis}
\begin{aligned}
\ &\partial(\Theta\{\Psi_1,\dots,\Psi_m\})=(-1)^{(|\Theta|-1)(|\Psi_1|-1)}\Psi_1\cdot \Theta\{\Psi_2,\dots,\Psi_m\}-\\
&\sum_{j=1}^{m-1}(-1)^{|\Theta|-1+\sum_{s< j}(|\Psi_s|-1) }\Theta\{\Psi_1,\dots,\Psi_j\cdot\Psi_{j+1},\dots,\Psi_m\}+
\Theta\{\Psi_1,\dots,\Psi_{m-1}\}\cdot\Psi_m
\end{aligned}
\end{equation}
\begin{equation}\label{eqabcbr1}
(\Theta_1\cdot\Theta_2)\{\Psi_1,\dots,\Psi_m\}=\sum_{\ell=0}^m(-1)^{(|\Theta_2|-1)(\sum_{s=1}^\ell(|\Psi_s|-1))}\Theta_1\{\Psi_1,\dots,\Psi_\ell\}\cdot \Theta_2\{\Psi_{\ell+1},\dots,\Psi_m\}
\end{equation}
\begin{equation}\label{eqabcbr2}
\begin{aligned}
\ &(\Theta\{\Psi_1,\dots,\Psi_m\})\{\Gamma_1,\dots,\Gamma_N\}=\sum_{i_1,\dots,i_m}(-1)^{\sum_{\ell=1}^m(|\Psi_\ell|-1)(\sum_{s\le i_\ell}(|\Gamma_s|-1)}\\
&\Theta\{\Gamma_1,\dots,\Gamma_{i_1},\Psi_1\{\Gamma_{i_1+1},\dots,\Gamma_{j_1}\},\Gamma_{j_1+1},\dots,\Gamma_{i_2},\Psi_2\{\Gamma_{i_2+1},\dots,\Gamma_{j_2}\},\dots\}
\end{aligned}
\end{equation}
These operations define an action of the operad $\Br$ on $C^\udot(A,A)[1]$.

\subsection{\sc From $C(A,A)[1]$ to $C(A,A)$}\label{app13}
In this paper, we work with the unshifted Hochschild cochain complex $C^\udot(A,A)$, as well as with the ``categorified'' versions of it $\Coh_\dg(C,D)(F,G)$. We would like to write down some formulas for $C^\udot(A,A)$ with correct signs we use in the main body of the paper. 

\begin{remark}\label{rembraceshift}{\rm
We draw reader's attention that we do {\it not } deal with the shifted brace operad $\Br\{-1\}$. We produce from $\Br$ a coloured operad $\Br_\mathrm{col}$ with 2 colors, such that it acts on $(C^\udot(A,A),C^\udot(A,A)[1])$. Indeed, in the brace operation $\Theta\{\Psi_1,\dots,\Psi_k\}$ only $\Theta$ is being shifted by $[-1]$, but $\Psi_1,\dots,\Psi_k$ remain the same. 
The correct notation should be $\Theta[-1]\{\Psi_1,\dots,\Psi_k\}$. However, to avoid any confusion, we use the notation $\Theta\{\Psi_1,\dots,\Psi_k\}_{[-1]}$. 

In these notations, the Tsygan map is
\begin{equation}
C^\udot(A,A)\to C^\udot(C^\udot(A,A),C^\udot(A,A))
\end{equation}
$$
\Theta\mapsto \Theta\{\Psi_1,\dots,\Psi_k\}_{[-1]}
$$
It has degree 0 and preserves the operations, see [Ts, Prop.3 and 4]. The original map in loc.cit. was seemingly a map
$$
C^\udot(A,A)[1]\to C^\udot(C^\udot(A,A),C^\udot(A,A))[1]
$$
so the changes we made are minor. 
}
\end{remark}

We write down all ``shifted'' formulas explicitly.

\begin{equation}
\begin{aligned}
\ &(d_\Hoch\Psi)(a_1,\dots,a_{n+1})=(-1)^{|a_1||\Psi|+|\Psi|}a_1\Psi(a_2,\dots,a_{n+1})+\\
&\sum_{j=1}^n(-1)^{|\Psi|+\sum_{s=1}^j(|a_s|+1)}\Psi(a_1,\dots,a_ja_{j+1},\dots,a_{n+1})+(-1)^{|\Psi|+1+\sum_{i=1}^n(|a_i|+1)}
\Psi(a_1,\dots,a_n)a_{n+1}
\end{aligned}
\end{equation}
\begin{equation}
(\Psi_1\cup \Psi_2)(a_1,\dots,a_{m+n})=(-1)^{|\Psi_2|(|a_1|+\dots+|a_m|+m)}\Psi_1(a_1,\dots,a_m)\Psi_2(a_{m+1},\dots,a_{m+n})
\end{equation}
(where $\psi_1\cup \psi_2=(-1)^{|\psi_1|-1}\psi_1\cdot \psi_2$).
\begin{equation}
\begin{aligned}
\ &\Theta\{\Psi_1,\dots,\Psi_m\}_{[-1]}(a_1,\dots,a_N)=\sum_{i_1,\dots,i_m}(-1)^{ \sum_{\ell=1}^m(|\Psi_\ell|-1)\cdot \sum_{s\le i_\ell}(|a_s|+1)}   \\
&\Theta\Big(a_1,\dots,\Psi_1(a_{i_1+1},\dots,a_{j_1}),a_{j_1+1},\dots,\Psi_2(a_{i_2+1},\dots,a_{j_2}),a_{j_2+1},\dots,\Psi_m(a_{i_m+1},\dots,a_{j_m}),\dots,a_N\Big)
\end{aligned}
\end{equation}
(see Remark \ref{rembraceshift}).

This operations fulfil the following identities.

\begin{equation}\label{eqapp6x}
[d_\tot,f\cup g]=0
\end{equation}

\begin{equation}\label{eqapp7x}
\begin{aligned}
\ &[d_\tot,f\{g_1,\dots,g_n\}_{[-1]}=(-1)^{(|g_1|-1)|f|}g_1\cup f\{g_2,\dots,g_n\}_{[-1]}\\
&-\sum_{j=1}^{n-1}(-1)^{|f|+\sum_{s=1}^{j}(|g_j|-1)}f\{g_1,\dots,g_j\cup g_{j+1},\dots,g_n\}_{[-1]}
+(-1)^{|f|+|g_1|+\dots+|g_{n-1}|-n}f\{g_1,\dots,g_{n-1}\}_{[-1]}\cup g_n
\end{aligned}
\end{equation}
(where $d_\tot$ is the total differential).

\begin{equation}\label{eqapp3x}
(f\cup g)\cup h=f\cup (g\cup  h)
\end{equation}
\begin{equation}\label{eqapp4x}
\begin{aligned}
\ &(f_1\cup f_2)\{g_1,\dots,g_n\}_{[-1]}=
\sum_{k=0}^n(-1)^{\sum_{i=1}^k|f_2|(|g_k|-1)}f_1\{g_1,\dots,g_k\}_{[-1]}\cup f_2\{g_{k+1},\dots,g_n\}_{[-1]}
\end{aligned}
\end{equation}
\begin{equation}\label{eqapp5x}
\begin{aligned}
\ &f\{g_1,\dots,g_m\}_{[-1]}\{h_1,\dots,h_N\}_{[-1]}=
\sum_{i_1,\dots,i_m} (-1)^{\sum_{\ell=1}^m|g_m|\cdot (\sum_{s\le i_\ell}(|h_s|-1))} \\
&f\big\{h_1,\dots,h_{i_1}, g_1\{h_{i_1+1},\dots,h_{j_1}\}_+,h_{j_1+1},\dots,h_{i_m},g_m\{h_{i_m+1},\dots,h_{j_m}\}_+,\dots,h_N\big\}_{[-1]}
\end{aligned}
\end{equation}

\endcomment

\comment

In this paper, we deal with operations on $C^\udot(A,A)$ itself rather than on $C^\udot(A,A)[1]$.
Apparently, there are two sources of additional signs which emerge in the passage from $C^\udot(A,A)[1]$ to $C^\udot(A,A)$. 

In general, if an operad $\mathcal{O}$ in $\Vect_\dg(\k)$ acts on $X\in \Vect_\dg(k)$, the operad $\mathcal{O}\{1\}$ acts on $X[1]$, where $\mathcal{O}\{1\}$ is the {\it operadic shift}.

Recall that $\mathcal{O}\{1\}(n)=\mathcal{O}(n)[-n+1]$, with the action of the symmetric group $\Sigma_n$ on $\mathcal{O}\{1\}(n)$ twisted by the sign representation $sgn_n$. One views $\mathcal{O}\{1\}$ as the (level) tensor product with the $\End$-operad
\begin{equation}\label{opshift0}
\mathcal{O}\{1\}=\underline{\End}(\k[1])\otimes \mathcal{O}
\end{equation}

The operadic compositions should be also suitably sign-corrected:
\begin{equation}\label{opshift1}
(O_m\{1\})\circ_i (O_{n}\{1\})=(-1)^{(i-1)(n-1)+|O_m|(n-1)}(O_m\circ_i O_n)\{1\}
\end{equation}
where $O_\ell\in O(\ell)$. The latter formula is used to describe in terms of generators and relations the shifted operad of an operad given in terms of generators and relations.

The sign \eqref{opshift1} can be deduced from \eqref{opshift0} as follows. First of all, 
\begin{equation}
\underline{\End}(\k[1])_m\circ_i \underline{\End}(\k[1])_n=(-1)^{(i-1)(n-1)}\underline{\End}(\k[1])_{m+n-1}
\end{equation}
where $\underline{\End}(\k)_\ell\in\underline{\End}(\k)(\ell)$ is the generator obtained from the generator in $\k$.

Next, for any two operads $\mathcal{O}$ and $\mathcal{O}^\prime$, one has by definition:
$$
(\mathcal{O}_m\otimes\mathcal{O}^\prime_m)\circ_i(\mathcal{O}_n\otimes\mathcal{O}^\prime_n)=(-1)^{|\mathcal{O}^\prime_m||\mathcal{O}_n|}(\mathcal{O}_m\circ_i\mathcal{O}_n)\otimes (\mathcal{O}^\prime_m\circ_i\mathcal{O}^\prime_n)
$$

For general compositions, one deduces:
\begin{equation}
\mathcal{O}_m\{1\}\circ (\mathcal{O}_{n_1}\{1\}\otimes\dots\otimes\mathcal{O}_{n_m}\{1\})=(-1)^{\sum_{\ell=1}^m(n_\ell-1)(\ell-1)+\sum_{\ell=1}^m(n_\ell-1)\cdot( \sum_{s<\ell}\mathcal|{O}_s|)}(\mathcal{O}_m\circ(\mathcal{O}_{n_1}\otimes\dots\mathcal{O}_{n_m}))\{1\}
\end{equation}

We want to make precise the way how the shifted operad $\mathcal{O}\{1\}$ acts on $X[1]$. It is {\it not} true that the action $O_n\{1\}(f_1[1],\dots,f_n[1])$ of an operation $O_n\{1\}\in \mathcal{O}\{1\}(n)$ is just $O_n(f_1,\dots,f_n)[-n+1]$. 

The correct formula is:
\begin{equation}\label{opshift2}
O_n\{1\}(f_1[1],\dots,f_n[1])=(-1)^{(n-1)|f_1|+(n-2)|f_2|+\dots+|f_{n-1}|+n|O_n|}O_n(f_1,\dots, f_n)[1]
\end{equation}
(The reader can check that \eqref{opshift1} and \eqref{opshift2} are consistent).

Consider the case of the operad $\Br$ acting on $C^\udot(A,A)[1]$, we want the operad $\Br\{-1\}$ to act on $C^\udot(A,A)$. There is no problem with the differential $d_\dg$, as we are now in the situation of the {\it r.h.s.} of \eqref{eqapp10}; we only replace $d_\dg$ by $d_\dg[1]=-d_\dg$.

(To avoid any confusion, we denote the brace operation on the complex $C^\udot(A,A)$ by $\{\}_+$, and the cup-product by $\cup$, instead of $\cdot$).

These operations obey the relations in the operad $\Br\{-1\}$:
\begin{equation}\label{eqapp3x}
(f\cdot g)\cdot h=f\cdot(g\cdot h)
\end{equation}
\begin{equation}\label{eqapp4x}
\begin{aligned}
\ &(f_1\cup f_2)\{g_1,\dots,g_n\}_+=\\
&\sum_{k=0}^n(-1)^{(n-k)|f_1|+k|f_2|+(n-k)(|g_1|+\dots+|g_k|-k)+k+\sum_{i=1}^k|f_2||g_k|}f_1\{g_1,\dots,g_k\}_+\cup f_2\{g_{k+1},\dots,g_n\}_+
\end{aligned}
\end{equation}
\begin{equation}\label{eqapp5x}
\begin{aligned}
\ &f\{g_1,\dots,g_m\}_+\{h_1,\dots,h_N\}_+=
\sum_{i_1,\dots,i_m} (-1)^{\sum_{\ell=1}^m(|g_m|\cdot \sum_{s\le i_\ell}|h_s|+i_\ell)+\epsilon(i_1,\dots,i_m,j_1,\dots,j_m)} \\
&f\big\{h_1,\dots,h_{i_1}, g_1\{h_{i_1+1},\dots,h_{j_1}\}_+,h_{j_1+1},\dots,h_{i_m},g_m\{h_{i_m+1},\dots,h_{j_m}\}_+,\dots,h_N\big\}_+
\end{aligned}
\end{equation}
with 
\begin{equation}\label{sterrible}
\begin{aligned}
\ &\epsilon(i_1,\dots,i_m,j_1,\dots,j_m)=\sum_{s=1}^m(j_s-i_s)(|f|-1)+\sum_{\ell=1}^m(\ell-i_\ell+\sum_{s=\ell+1}^m(j_s-i_s))(|g_\ell|-1)+\\
&\sum_{\ell=0}^m\sum_{s=j_\ell+1}^{i_{\ell+1}}(m-\ell+\sum_{t=\ell+1}^m(j_t-i_t))(|h_s|-1)+
\sum_{\ell=1}^m\sum_{s=i_\ell+1}^{j_\ell}(m-\ell-s+\sum_{t=\ell+1}^m(j_t-i_t))(|h_s|-1)
\end{aligned}
\end{equation}
where we set $j_0=0$, $i_{m+1}=N$.

where
$N^\prime=N-J+I+m$, $J=j_1+\dots+j_m$, $I=i_1+\dots+i_m$. 
\begin{equation}
\begin{aligned}
\ &
N^\prime(|f|-1)+\\
&\sum_{s=1}^{i_1}(N^\prime-s)(|h_s|-1)+\\
&(N^\prime-i_1-1)(|g_1|+|h_{i_1+1}|+\dots+|h_{j_1}|-j_1+i_1-1)+\\
&\sum_{s=j_1+1}^{i_2}(N^\prime+j_1-i_1-1-s)(|h_s|-1)+\\
&(N^\prime+j_1-i_1-i_2-2)(|g_2|+|h_{i_2+1}|+\dots+|h_{j_2}|-j_2+i_2-1)+\\
&\sum_{s=j_2+1}^{i_3}(N^\prime+(j_1-i_1)+(j_2-i_2)-2-s)(|h_s|-1)+\dots\\
&\sum_{s=j_\ell+1}^{i_{\ell+1}}(N^\prime+(j_1-i_1)+\dots+(j_\ell-i_\ell)-\ell-s)(|h_s|-1)+\\
&(N^\prime+(j_1-i_1)+\dots+(j_\ell-i_\ell)-\ell-i_{\ell+1}-1)(|g_{\ell+1}|+|h_{i_{\ell+1}+1}|+\dots+|h_{j_{\ell+1}}|-j_{\ell+1}+i_{\ell+1}-1)+\dots
\end{aligned}
\end{equation}

\begin{equation}
\begin{aligned}
r.h.s.=&(N^\prime-1)(|f|-1)+\\
&\sum_{\ell=1}^m(N^\prime+(j_1-i_1)+\dots+(j_{\ell-1}-i_{\ell-1})-j_\ell-\ell)(|g_\ell|-1)+\\
&\sum_{\ell=0}^m\sum_{s=j_{\ell}+1}^{i_{\ell+1}}(N^\prime+(j_1-i_1)+\dots+(j_\ell-i_\ell)-\ell-s)(|h_s|-1)+\\
&\sum_{\ell=1}^m\sum_{s=i_\ell+1}^{j_{\ell}}(N^\prime+(j_1-i_1)+\dots+(j_{\ell-1}-i_{\ell-1})-i_\ell-\ell)(|h_s|-1)
\end{aligned}
\end{equation}
where we set $j_0=0$, $i_{m+1}=N$.

\begin{equation}\label{eqapp6x}
[d_\tot,f\cup g]=0
\end{equation}

\begin{equation}\label{eqapp7x}
\begin{aligned}
\ &[d_\tot,(-1)^{|f|-1}f\{g_1,\dots,g_n\}_+]=(-1)^{|g_1||f|+n}g_1\cup f\{g_2,\dots,g_n\}_+\\
&-\sum_{j=1}^{n-1}(-1)^{1+\sum_{s=1}^{j-1}|g_j|}f\{g_1,\dots,g_j\cup g_{j+1},\dots,g_n\}_+
+(-1)^{|g_1|+\dots+|g_{n-1}|-n+1}f\{g_1,\dots,g_{n-1}\}_+\cup g_n
\end{aligned}
\end{equation}
(where $d_\tot$ is the total differential).

\endcomment

 \comment
\section{\sc Proof of Theorem \ref{theorassoc}}\label{sectionproofta}
Here we complete the proof of Theorem \ref{theorassoc} started in Section \ref{sectionassoc}. We check the compatibility with the differential (s1) in Section \ref{sectionta1}, check statements (s2)-(s3) in Section \ref{sectionta2}, and check statement (s4) in Section \ref{sectionta3}.

\subsection{\sc Compatibility with the differential (s1)}\label{sectionta1}
We check the compatibility the differential for the hardest case of \eqref{eqassoc4bis}, the cases of \eqref{eqassoc1}, \eqref{eqassoc1bis}, \eqref{eqassoc1bisbis} are substantially easier and are left to the reader.

At first, we check this compatibility up to the signs, writing them as $\pm$; after that, we compare the signs with the corresponding terms. 

We need to prove:
\begin{equation}\label{check1}
\alpha_{CDE}\Big(d\varepsilon\big(\varepsilon(f;g_1,\dots,g_k);h_1,\dots,h_N\big)\Big)=
d\alpha_{CDE}\Big(\varepsilon\big(\varepsilon(g;g_1,\dots,g_k);h_1,\dots,h_N\big)\Big)
\end{equation}

The action of the differential on $\varepsilon\big(\varepsilon(f;g_1,\dots,g_k);h_1,\dots,h_N\big)$  is
\begin{equation}\label{check2}
\begin{aligned}
\ & d\Big(\varepsilon\big(\varepsilon(f; g_1,\dots,g_k), h_1,\dots,h_N\big)\Big)=\\
&\pm \varepsilon\big(\varepsilon(f;g_1,\dots,g_k); h_2,\dots,h_N\big)\circ  (\id\star h_1)+\sum_{j=1}^{N-1}\pm \varepsilon\big(\varepsilon(f;g_1,\dots,g_k);h_1,\dots, h_{j+1}\circ h_j,\dots, h_N\big)+\\
&\pm (\id\star h_N)\circ \varepsilon\big(\varepsilon(f; g_1,\dots,g_k); h_1,\dots,h_{N-1}\big)+\\
&\varepsilon\big(d(\varepsilon(f;g_1,\dots,g_k));h_1,\dots,h_N\big)+\sum_{j=1}^N\pm \varepsilon\big(\varepsilon(f;g_1,\dots,g_k);h_1,\dots, d(h_j),\dots,h_N\big)
\end{aligned}
\end{equation}
The term $d\varepsilon(f;g_1,\dots,g_k)$ is further expressed by ???. We get
\begin{equation}\label{check3}
\begin{aligned}
\ & d\Big(\varepsilon\big(\varepsilon(f; g_1,\dots,g_k), h_1,\dots,h_N\big)\Big)=\\
&\pm \varepsilon\big(\varepsilon(f;g_1,\dots,g_k); h_2,\dots,h_N\big)\circ  (\id\star h_1)+\sum_{j=1}^{N-1}\pm \varepsilon\big(\varepsilon(f;g_1,\dots,g_k);h_1,\dots, h_{j+1}\circ h_j,\dots, h_N\big)+\\
&\pm (\id\star h_N)\circ \varepsilon\big(\varepsilon(f; g_1,\dots,g_k); h_1,\dots,h_{N-1}\big)+\\
&\pm \underline{\varepsilon\big((\id\star g_k)\circ \varepsilon(f;g_1,\dots,g_{k-1}); h_1,\dots,h_N\big)}+\sum_{j=1}^{k-1}\pm\varepsilon\big(\varepsilon(f; g_1,\dots, g_{j+1}\circ g_j,\dots,g_k);h_1,\dots,h_N\big)+\\
&\underline{\varepsilon\big(\varepsilon(f;g_2,\dots,g_k)\circ (\id\star g_1);h_1,\dots,h_N\big)}+\\
&\varepsilon\big(\varepsilon(df;g_1,\dots,g_k);h_1,\dots,h_N\Big)+\sum_{j=1}^k\pm \varepsilon\big(\varepsilon(f;g_1,\dots, dg_j,\dots,g_k);h_1,\dots,h_N\big)+\\
&\sum_{j=1}^n\pm \varepsilon\big(\varepsilon(f;g_1,\dots,g_k);h_1,\dots, dh_j,\dots, h_N\big)
\end{aligned}
\end{equation}
Finally, the two underlined terms are expressed by ???. We get:
\begin{equation}\label{check4}
\begin{aligned}
\ & d\Big(\varepsilon\big(\varepsilon(f; g_1,\dots,g_k), h_1,\dots,h_N\big)\Big)=\\
&\pm \varepsilon\big(\varepsilon(f;g_1,\dots,g_k); h_2,\dots,h_N\big)\circ  (\id\star h_1)+\sum_{j=1}^{N-1}\pm \varepsilon\big(\varepsilon(f;g_1,\dots,g_k);h_1,\dots, h_{j+1}\circ h_j,\dots, h_N\big)+\\
&\pm (\id\star h_N)\circ \varepsilon\big(\varepsilon(f; g_1,\dots,g_k); h_1,\dots,h_{N-1}\big)+
\sum_{s=0}^N \pm \varepsilon\big((\id\star g_k);h_{s+1},\dots,h_N\big)\circ \varepsilon\big((f;g_1,\dots,g_{k-1}); h_1,\dots,h_s\big)+\\
&\sum_{j=1}^{k-1}\pm\varepsilon\big(\varepsilon(f; g_1,\dots, g_{j+1}\circ g_j,\dots);h_1,\dots,h_N\big)+
\sum_{s=0}^N\pm\varepsilon\big(\varepsilon(f;g_2,\dots,g_k);h_{s+1},\dots,h_N\big)\circ\varepsilon\big((\id\star g_1);h_1,\dots,h_s\big)+\\
&\varepsilon\big(\varepsilon(df;g_1,\dots,g_k);h_1,\dots,h_N\Big)+\sum_{j=1}^k\pm \varepsilon\big(\varepsilon(f;g_1,\dots, dg_j,\dots,g_k);h_1,\dots,h_N\big)+\\
&\sum_{j=1}^n\pm \varepsilon\big(\varepsilon(f;g_1,\dots,g_k);h_1,\dots, dh_j,\dots, h_N\big)
\end{aligned}
\end{equation}

To compute the l.h.s. of \eqref{check1}, we apply $\alpha_{CDE}$ to the r.h.s. of \eqref{check4}, making use that $\alpha_{CDE}$ maps compositions to compositions. Introduce notation:
$$
\Theta(f; g_1,\dots,g_k; h_1,\dots,h_N):=\text{the r.h.s. of \eqref{eqassoc4bis}}
$$
One has:
\begin{equation}\label{check5}
\begin{aligned}
\ &\alpha_{CDE}\Big(d\varepsilon\big(\varepsilon(f;g_1,\dots,g_k);h_1,\dots,h_N\big)\Big)=\\
&\pm \Theta(f;g_1,\dots,g_k;h_2,\dots,h_N)\circ (\id\star h_1)+\sum_{j=1}^{N-1}\pm\Theta(f;g_1,\dots,g_k;h_1,\dots,h_{j+1}\circ h_j,\dots,h_N)+\\
&\pm(\id\star h_N)\circ \Theta(f;g_1,\dots,g_k;h_1,\dots,h_{N-1})+\sum_{s=0}^N\pm (\id\star\varepsilon(g_k;h_{s+1},\dots,h_N))\circ \Theta(f;g_1,\dots,g_{k-1};h_1,\dots,h_s)+\\
&\sum_{j=1}^{k-1}\pm\Theta(f;g_1,\dots, g_{j+1}\circ g_j,\dots,g_k;h_1,\dots,h_N)+\sum_{s=0}^N\pm\Theta(f;g_2,\dots,g_k;h_{s+1},\dots,h_N)\circ (\id\star\varepsilon(g_1;h_1,\dots,h_s))+\\
&\Theta(df;g_1,\dots,g_k;h_1,\dots,h_N)+\sum_{j=1}^k\pm\Theta(f;g_1,\dots,dg_j,\dots,g_k;h_1,\dots,h_N)+\\
&\sum_{j=1}^N\pm\Theta(f;g_1,\dots,g_k;h_1,\dots,dh_j,\dots,h_N)
\end{aligned}
\end{equation}
Now we compute the r.h.s. of \eqref{check1}.
\begin{equation}\label{check6}
\begin{aligned}
\ &d\Big(\alpha_{C,D,E}\big(\varepsilon(\varepsilon(f; g_1,\dots,g_k);h_1,\dots,h_N)\big)\Big)=\\
&\sum_S\pm d\Big(\varepsilon\big(f;\   h_1,\dots, h_{i_1}, \varepsilon(g_1;h_{i_1+1},\dots,h_{i_2}), h_{i_2+1},\dots, h_{i_3},\varepsilon(g_2;h_{i_3+1},\dots,h_{i_4}), h_{i_4+1},\dots,h_{i_5},\dots \big)\Big)
\end{aligned}
\end{equation}
We compute the differential in the second line of \eqref{check6} by successive use of\eqref{eqd1}. We distinguish the following cases:
\begin{itemize}
\item[(a1)] $i_1=0$,
\item[(a2)] $i_1>0$,
\item[(b1)] $i_{2k}=N$,
\item[(b2)] $i_{2k}<N$.
\end{itemize}
For the case $(a1)$, the leftmost extreme boundary term in \eqref{eqd1} is 
\begin{equation}\label{check7}
\pm\Theta(f;g_2,\dots,g_k;h_{i_2+1},\dots,h_N)\circ (\id\star\varepsilon(g_1;h_1,\dots,h_{i_2}))
\end{equation}
while in the case $(a2)$ this term is
\begin{equation}\label{check8}
\pm \Theta(f;g_1,\dots,g_k;h_2,\dots,h_N)\circ (\id\star h_1)
\end{equation}
and similarly for the rightmost extreme terms in \eqref{eqd1} in $(b1)$ and $(b2)$ cases. 

The terms such as 
\begin{equation}\label{check9}
\varepsilon\big(f; h_1,\dots, h_{i_1}, \underline{\varepsilon(g_1;h_{i_1+1},\dots,h_{i_2-1})\circ (\id\star h_{i_2})}, h_{i_2+1},\dots,\big)
\end{equation}
(obtained from the extreme boundary terms of plugged-in $\varepsilon(g_j; h_{i_{2j-1}+1},\dots, h_{i_{2j}})$, such as the underlined fragment in \eqref{check9}), are cancelled, as follows.

The summand $S$ (in notations of \eqref{eqassoc4bis}) which is displayed in \eqref{check9} corresponds to $m_1=i_1,\ell_1=i_2-i_1,\dots$ (in the notations introduced below \eqref{eqassoc4bis}).
Consider the summand $S^\prime$, corresponded to $m_1^\prime=i_1,\ell^\prime_1=i_2-i_1-1,\dots$. Then the summand in \eqref{check6}, corresponded to $S^\prime$, has the summand equal to the one displayed in \eqref{check9}, which is obtained from a regular (not an extreme) component in the differential \eqref{eqd1}.

After these remarks we see that the second line of \eqref{check6} has exactly the same summands as the r.h.s. of \eqref{check5}.

The equality of signs is proven by a straightforward but cumbersome check. ???

\subsection{\sc Check of (s2)-(s3)}\label{sectionta2}

\subsection{\sc Check of (s4)}\label{sectionta3}

\endcomment

\clearpage

\bigskip

\noindent{\small
 {\sc Euler International Mathematical Institute\\
10 Pesochnaya Embankment, St. Petersburg, 197376 Russia }}

\bigskip

\noindent{{\it e-mail}: {\tt shoikhet@pdmi.ras.ru}}

\end{document}